\documentclass[10pt,reqno]{amsart}
\setlength{\topmargin}{0cm}
\setlength{\textheight}{21cm}
\setlength{\oddsidemargin}{0in}
\setlength{\evensidemargin}{0in}
\setlength{\textwidth}{6.5in}
\setlength{\parindent}{.25in}
\usepackage{amsmath} 
\usepackage{amssymb}
\usepackage{dsfont}
\usepackage[dvips,draft,final]{graphics}
\usepackage{amssymb,amsmath}
\usepackage[T1]{fontenc}
\usepackage{fancyhdr}
\usepackage{url}

\usepackage{color}
\usepackage{graphicx}

\def\squarebox#1{\hbox to #1{\hfill\vbox to #1{\vfill}}}

\theoremstyle{plain}

\newtheorem{Thm}{Theorem}
   
\newtheorem{cor}{Corollary}

\newtheorem{lem}{Lemma}

\pagestyle{headings}

\newcommand{\re}{\mathfrak R}
\newcommand{\im}{\mathfrak I}

\newcommand{\R}{\mathbb{R}}
 
\newcommand{\Bb}{\mathcal{H}_1({\R}^n)}
\newcommand{\B}{\dot{\mathcal{H}}_1({\R}^n)}

\newcommand{\CI}{{\mathcal C}^{\infty}_{0}({\R}^{n}) }

\newcommand{\C}{{\mathbb C}}

\def\epsilon{\varepsilon}
\def\phi {\varphi}

\newtheorem{rem}{Remark}
\newtheorem{prop}{Proposition}
\newtheorem{defi}{Definition}

\providecommand{\abs}[1]{\left\lvert#1\right\rvert}
\providecommand{\norm}[1]{\left\lVert#1\right\rVert}
\numberwithin{equation}{section}
\renewcommand{\d}{\textrm{d}}
\renewcommand{\leq}{\leqslant}
\renewcommand{\geq}{\geqslant}
\providecommand{\abs}[1]{\left\lvert#1\right\rvert}
\providecommand{\norm}[1]{\left\lVert#1\right\rVert}

\begin{document}

\title[Meromorphic continuation of the resolvent]
{On the meromorphic continuation of the resolvent for the wave equation with time-periodic perturbation and applications}

\author[Y. Kian]{Yavar Kian}

\address {Universit\'e Bordeaux I, Institut de Math\'ematiques de Bordeaux,  351, Cours de la Lib\'eration, 33405  Talence, France}

\email{Yavar.Kian@math.u-bordeaux1.fr}

\maketitle
\begin{abstract}
Consider the wave equation $\partial_t^2u-\Delta_xu+V(t,x)u=0$, where $x\in\R^n$ with $n\geq3$ and $V(t,x)$ is $T$-periodic in time and decays exponentially in space.  Let $ U(t,0)$ be the associated propagator and let  $R(\theta)=e^{-D\left\langle x\right\rangle}(U(T,0)-e^{-i\theta})^{-1}e^{-D\left\langle x\right\rangle}$  be the resolvent  of the Floquet operator $U(T,0)$  defined for $\im(\theta)>BT $ with $B>0$ sufficiently large. We establish a meromorphic continuation of $R(\theta)$   from which we deduce the asymptotic expansion of  $e^{-(D+\epsilon)\left\langle x\right\rangle}U(t,0)e^{-D\left\langle x\right\rangle}f$, where $f\in \dot{H}^1(\R^n)\times L^2(\R^n)$, as $t\to+\infty$  with a remainder term whose energy decays exponentially  when $n$ is odd and  a remainder term whose energy is bounded with respect to $t^l\log(t)^m$, with $l,m\in\mathbb Z$, when $n$ is even.  Then, assuming that  $R(\theta)$ has no poles lying in $\{\theta\in\C\  :\ \im(\theta)\geq0\}$ and is bounded for $\theta\to0$, we obtain local energy decay as well as global Strichartz estimates for the solutions of $\partial_t^2u-\Delta_xu+V(t,x)u=F(t,x)$. \end{abstract}

\section*{Introduction}
\renewcommand{\theequation}{\arabic{section}.\arabic{equation}}
\setcounter{equation}{0}
Consider the Cauchy problem
\begin{equation} \label{problem}  \left\{\begin{aligned}
\partial_t^2u-\Delta_{x}u+V(t,x)u&=F(t,x),\ \ t>\tau, \ \ x\in{\R}^{n},\\\
(u,u_{t})(\tau,x)&=(f_{1}(x),f_{2}(x))=f(x),\ \ x\in{\R}^n,\end{aligned}\right.\end{equation}
where $n\geq3$ and the potential $V(t,x)\in\mathcal C^\infty(\R^{1+n},\R)$ satisfies  the conditions
\begin{enumerate}
\item[$\rm(H1)$] there exist $D,\epsilon>0$ such that for all $\alpha\in\mathbb N^n$, $\beta\in\{0,1,2\}$  we have \[\abs{\partial_t^\beta\partial_x^\alpha V(t,x)}\leq C_{\alpha,\beta} e^{-(2D+3\epsilon)\left\langle x\right\rangle},\]
\item[$\rm(H2)$] there exists $T>0$ such that $V(t+T,x)=V(t,x)$.
\end{enumerate} 
Let  $\dot{H}^\gamma({\R}^n)=\Lambda^{-\gamma} (L^2({\R}^n))$ be the homogeneous Sobolev spaces, where $\Lambda=\sqrt{-\Delta_x}$ is determined by the Laplacian in ${\R}^n$. Set $\dot{\mathcal{H}}_\gamma(\R^n)=\dot{H}^\gamma({\R}^n)\times\dot{H}^{\gamma-1}({\R}^n)$ and notice  that for $\gamma<\frac{n}{2}$ and $r>0$ the multiplication with $e^{-r\left\langle x\right\rangle}$ is continuous from $\dot{H}^\gamma({\R}^n)$ to $H^\gamma({\R}^n)$ (this follows from the fact that $e^{-r\left\langle x\right\rangle}\in\mathcal S(\R^n)$). The solution of
\eqref{problem} with $F = 0$ is given by the propagator
\[U(t,\tau):\dot{\mathcal{H}}_\gamma({\R}^n)\ni(f_1,f_2)=f\mapsto U(t,\tau)f=(u,u_t)(t,x)\in\dot{\mathcal{H}}_\gamma({\R}^n)\]
and we refer to \cite{P1}, Chapter V, for the properties of $U(t,\tau)$. Let $U_0(t)$ be the unitary
propagator  in $\dot{\mathcal{H}}_\gamma({\R}^n)$ related to the Cauchy problem \eqref{problem} with $V = 0$, $s = 0$ and let $U(T) = U(T, 0)$.
We have the representation
\[U(t,\tau)=U_0(t-\tau)-\int_\tau^tU(t,s)Q(s)U_0(s)\d s,\quad t\geq\tau\]
where
\[Q(t)=\left(\begin{array}{cc}0&0\\ V(t,x)&0\end{array}\right).\]
By interpolation it is easy to see that 
\begin{equation} \label{energy}\norm{U(t,s)}_{\mathcal L(\dot{\mathcal{H}}_\gamma({\R}^n))}\leq C_\gamma e^{k_\gamma\abs{t-s}},\quad t\geq\tau\end{equation}
where $k_\gamma$ is bounded if $\gamma$ runs in a compact interval.

It is well known that for stationaries potentials ($V(t,x)$ independent of $t$), in order to establish large time estimates of solutions of \eqref{problem}, the main points (see \cite{SZ}, \cite{Va} and \cite{Vo}) are the meromorphic continuation and estimates of the resolvent $(P-\lambda^2)^{-1}$ associate to the hamiltonian $P=-\Delta_x+V$. For time dependent potential, the hamiltonian $P$ is also time-dependent and we can not deduce the large time behavior of solutions of \eqref{problem} from the  properties of the resolvent $(P-\lambda^2)^{-1}$. Moreover, the time dependence of the potential $V(t,x)$ leads to many difficulties (see \cite{CPR} and \cite{P1}). The analysis of the Floquet operator $U(T)$ make it possible to overcome some of these difficulties. Following an idea of \cite{CS}, in \cite{BP1} and \cite{P1} the authors used the Lax-Phillips theory and proved many results including local energy decay and existence of scattering operator. In \cite{P2}, Petkov treats the case of even dimensions by considering the meromorphic continuation of the cut-off resolvent of the Floquet operator $U(T)$ defined by
\[R_{\psi_1,\psi_2}(\theta)=\psi_1(U(T)-e^{-i\theta})^{-1}\psi_2,\quad\psi_1,\psi_2\in\CI.\]
 Note that all these arguments hold only  for potentials $V(t,x)$ which are compactly supported with respect to $x$. Let us introduce, for $B>0$ sufficiently large,  the  resolvent
\[R(\theta)=e^{-D\left\langle x\right\rangle}(U(T)-e^{-i\theta})^{-1}e^{-D\left\langle x\right\rangle}\ :\B\rightarrow\B,\quad \im(\theta)\geq BT\]
with $D$ the constant of (H1).  The purpose of this paper is to extend the works of \cite{BP1}, \cite{BP2}, \cite{CPR}, \cite{P1} and \cite{P2}  to perturbations of the Laplacian which are time-periodic and non-compactly supported  in $x$ by showing the meromorphic continuation of $R(\theta)$.

We recall that the properties of $R(\theta)$ are closely related to the asymptotic expansion of\\
 $e^{-(D+\epsilon)\left\langle x\right\rangle}U(t,0)e^{-D\left\langle x\right\rangle}f$, with $f\in\B$,    as $t\to+\infty$.
Indeed, we can establish (see \cite{Ki4}) the  inversion formula
\begin{equation} \label{inv}e^{-D\left\langle x\right\rangle}U(kT,0)e^{-D\left\langle x\right\rangle}=-\frac{1}{2\pi}\int_{[-\pi+iBT,\pi+iBT]}\hspace{-2cm}e^{-i(k+1)\theta}R(\theta)\d\theta,\quad k\in\mathbb N.\end{equation}
We use the following definition of meromorphic family of bounded operators.
\begin{defi}Let $H_1$ and $H_2$  be two Hilbert spaces. A family of bounded operators
 $Q(\theta):H_1\rightarrow H_2$ is said to be meromorphic in a domain $D\subset\mathbb C$, if $Q(\theta)$ is meromorphically dependent on $\theta$ for $\theta\in D$ and for any pole  $\theta=\theta_0$  the coefficients of the negative powers of  $\theta-\theta_0$ in the appropriate Laurent extension are finite-rank operators. \end{defi}
Now assume that there exists $B_1\in\R$ such that $R(\theta)$ admits a meromorphic continuation with respect to $\theta$ for $\theta\in \{\theta\in\C : \im(\theta)> B_1T\}$. Then, applying \eqref{inv} and integrating $e^{-i(k+1)\theta}R(\theta)$, for $k\in\mathbb N$, on a well chosen contour (see  Lemma 3 of \cite{Ki4} and the proof of the  Main Theorem in \cite{Vai}),  we can establish the asymptotic expansion  of $e^{-(D+\epsilon)\left\langle x\right\rangle}U(t,0)e^{-D\left\langle x\right\rangle}f$  as $t\to+\infty$  with a remainder term whose energy is bounded with respect to $e^{(B_1+\epsilon_1)t}$ for all $\epsilon_1>0$. 

The  goal of this paper is to establish a meromorphic continuation of $R(\theta)$ that allows us to study the asymptotic expansion of $e^{-(D+\epsilon)\left\langle x\right\rangle}U(t,0)e^{-D\left\langle x\right\rangle}f$ as $t\to+\infty$ with a remainder term whose energy decays exponentially  when $n$ is odd and  a remainder term whose energy is bounded with respect to $t^l\log(t)^m$, with $l,m\in\mathbb Z$, when $n$ is even. For this purpose, it suffices to show   that there exists $B_1>0$ such that $R(\theta)$ admits  a meromorphic continuation to $\theta\in \{\theta\in\C : \im(\theta)>- B_1T\}$ for $n$ odd and to 
$\theta\in\{ \theta\in\mathbb C\ :\ \im(\theta)>- B_1T,\ \theta\notin2\pi\mathbb Z+i\R^-\}$  for $n$ even, and, for $n$ even,  to establish the asymptotic expansion as $\theta\to0$ of $R(\theta)$ (see Lemma 3, Lemma 4 in \cite{Ki4} and the proof of the  Main Theorem in \cite{Vai}). The main result of this paper is the following.
\begin{Thm}\label{t3} Assume $\rm(H1)$, $\rm(H2)$  fulfilled and let $n\geq3$. Then, for $D>0$ the constant of $\rm(H1)$,  $R(\theta)$ admits a meromorphic  continuation  to $\{ \theta\in\mathbb C\ :\ \im(\theta)>-\frac{DT}{4}\}$ for $n$ odd and to
\[\{ \theta\in\mathbb C\ :\ \im(\theta)>-\frac{DT}{4},\ \theta\notin2\pi\mathbb Z+i\R^-\}\] for $n$ even. Moreover, for $n$ even, there exists $\epsilon_0>0$ such that for $\theta\in\{ \theta\in\mathbb C\ :\ \abs{\theta}\leq \epsilon_0,\ \theta\notin2\pi\mathbb Z+i\R^-\}$ we have the representation
 \begin{equation}\label{T3}R(\theta)=\sum_{i\geq-m}\sum_{j\geq-m_i}R_{i,j}\theta^i(\log(\theta))^{-j},\end{equation}
where $\log$ is the logarithm defined on $\C\setminus i\R^-$ and, for $i<0$ or $j\neq0$, $R_{i,j}$ are  finite-rank operators.\end{Thm}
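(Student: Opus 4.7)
The plan is to reduce the meromorphic continuation of $R(\theta)$ to that of the free resolvent $R_0(\theta) := e^{-D\langle x\rangle}(U_0(T) - e^{-i\theta})^{-1} e^{-D\langle x\rangle}$ via a second resolvent identity, and then to apply the analytic Fredholm theorem. Starting from
\[(U(T)-e^{-i\theta})^{-1} - (U_0(T)-e^{-i\theta})^{-1} = -(U_0(T)-e^{-i\theta})^{-1}\,K\,(U(T)-e^{-i\theta})^{-1}\]
with $K := U(T) - U_0(T)$, inserting the identity $e^{D\langle x\rangle}e^{-D\langle x\rangle}=I$ at the interior slots and conjugating by $e^{-D\langle x\rangle}$ on the outside gives
\[R(\theta) = R_0(\theta) - R_0(\theta)\,\widetilde K\, R(\theta), \qquad \widetilde K := e^{D\langle x\rangle}K\, e^{D\langle x\rangle},\]
so that $R(\theta) = (I + R_0(\theta)\widetilde K)^{-1} R_0(\theta)$ whenever the inverse exists. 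Using Duhamel, $K = -\int_0^T U(T,s)\,Q(s)\,U_0(s)\,ds$; each integrand carries one factor $V(s,\cdot)$ decaying like $e^{-(2D+3\epsilon)\langle x\rangle}$ by $\rm(H1)$, so splitting $V = e^{-D\langle x\rangle}\widetilde V e^{-D\langle x\rangle}$ with $\widetilde V$ still exponentially decaying, and using finite propagation speed on $[0,T]$ to control $e^{\pm D\langle x\rangle}U(T,s)e^{\mp D\langle x\rangle}$, shows that $\widetilde K$ is bounded on $\B$.

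\noindent\textbf{Free resolvent.} For $\im\theta$ large one has the Neumann series $R_0(\theta)=-\sum_{k\geq 0}e^{i(k+1)\theta}\,e^{-D\langle x\rangle}U_0(T)^{k}e^{-D\langle x\rangle}$. The key estimate, to be proved using the explicit functional calculus of $U_0(T)$ in $\Lambda=\sqrt{-\Delta_x}$ and finite propagation speed, is an exponential-in-$k$ bound of the form $\|e^{-D\langle x\rangle}U_0(T)^{k}e^{-D\langle x\rangle}\|_{\mathcal L(\B)}\leq C e^{-(DT/4)k}$, which yields convergence of the series on $\{\im\theta > -DT/4\}$. The further meromorphic continuation—and, for $n$ even, the branch cut along $2\pi\mathbb Z + i\R^-$ together with the asymptotic expansion of $R_0(\theta)$ at $\theta=0$ with logarithmic terms—is obtained from the classical expansion of the outgoing resolvent of $-\Delta_x$ on weighted $L^2$: Huygens' principle gives a clean analytic continuation for $n$ odd, while Hadamard's expansion of the wave kernel produces the logarithmic sheet structure for $n$ even, as used in \cite{Ki4} and \cite{Vai}.

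\noindent\textbf{Fredholm step and expansion at the origin.} Since $\widetilde K$ is compact on $\B$ (compactness comes from the exponential decay of its integral kernel combined with the Sobolev compact embedding on bounded sets), $R_0(\theta)\widetilde K$ is a meromorphic family of compact operators on the extended domain. Invertibility of $I + R_0(\theta)\widetilde K$ for $\im\theta\geq BT$ follows from the Neumann representation of $(U(T)-e^{-i\theta})^{-1}$. The analytic Fredholm theorem then produces a meromorphic extension of $(I+R_0(\theta)\widetilde K)^{-1}$ with finite-rank singular parts, and hence of $R(\theta)=(I+R_0(\theta)\widetilde K)^{-1}R_0(\theta)$. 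For $n$ even, substituting the $R_0$-expansion near $\theta=0$ into the Fredholm inverse and expanding as a double series in $\theta$ and $\log\theta$ produces \eqref{T3}; the finite-rank property of the coefficients for $i<0$ or $j\neq 0$ is inherited from the finite-dimensional kernel/cokernel of the Fredholm operator at $\theta=0$.

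\noindent\textbf{Main obstacle.} The most delicate step is the quantitative decay $\|e^{-D\langle x\rangle}U_0(T)^{k}e^{-D\langle x\rangle}\|_{\mathcal L(\B)} \leq C e^{-(DT/4)k}$ that yields the explicit half-plane $\{\im\theta>-DT/4\}$: the factor $1/4$ is not automatic from finite propagation speed and requires a careful splitting of the weight $e^{-D\langle x\rangle}$ between the wave-front region of radius $kT$ and its decaying tail, while leaving an exponential margin for the Neumann series to converge. A second delicate point is tracking the compactness of $R_0(\theta)\widetilde K$ and the expansion \eqref{T3} through the branch points $\theta\in 2\pi\mathbb Z$ in the even-dimensional case, where one must handle logarithmic singularities of $R_0(\theta)$ without destroying the Fredholm structure.
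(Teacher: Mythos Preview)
Your route is genuinely different from the paper's. The paper never writes a second resolvent identity for $U(T)$ on $\B$; instead it passes through Vainberg's transform $F'$ and works in the time-periodic space $\mathcal H_{1,\mathrm{per}}(\R^{1+n})$. From the identity $e^{-\delta\langle x\rangle}(U(T)-e^{-i\theta})^{-1}e^{-\delta\langle x\rangle}=-e^{i\theta}F'[e^{-\delta\langle x\rangle}U(t,0)e^{-\delta\langle x\rangle}](T,\theta)$, Duhamel in the $t$-variable, and the key relation $F'[W_0\phi]=J(t,\theta)F'[\phi]$, the paper obtains the Fredholm equation
\[
\bigl(I+e^{-\delta\langle x\rangle}J(t,\theta)Q(t)e^{\delta\langle x\rangle}\bigr)\,F'[e^{-\delta\langle x\rangle}U(t,0)e^{-\delta\langle x\rangle}](t,\theta)=F'[e^{-\delta\langle x\rangle}U_0(t)e^{-\delta\langle x\rangle}](t,\theta)
\]
on $\mathcal H_{1,\mathrm{per}}(\R^{1+n})$, with compactness coming from the $t$-regularity gain in Proposition~\ref{p1}. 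Invertibility at one $\theta_0$ is then supplied by a Volterra argument in $t$ (Theorem~\ref{t8}). Your scheme bypasses $F'$ and the periodic spaces entirely, replacing the Volterra step by the trivial remark that $I+R_0(\theta)\widetilde K$ is invertible for $\im(\theta)$ large; that is a genuine simplification at the Fredholm stage.

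But both routes share the same hard core---the analytic continuation of the free piece---and here your proposal has a real gap. The claimed bound
\[
\norm{e^{-D\langle x\rangle}U_0(T)^k e^{-D\langle x\rangle}}_{\mathcal L(\B)}\leq Ce^{-(DT/4)k}
\]
is \emph{false} for even $n$: without Huygens' principle the local energy of free waves decays only polynomially in $k$, so the Neumann series for $R_0(\theta)$ converges only on $\{\im(\theta)>0\}$, not on $\{\im(\theta)>-DT/4\}$. Your appeal to ``Hadamard's expansion'' and ``the classical expansion of the outgoing resolvent of $-\Delta_x$'' is not a proof: the Floquet resolvent $(U_0(T)-e^{-i\theta})^{-1}$ involves \emph{all} the spectral points $\lambda_k=(-\theta+2\pi k)/T$ simultaneously, and summing the individual continuations in a weighted space with the precise threshold $-DT/4$ is exactly the work the paper does in Subsection~2.2. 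There the Herglotz--Petrovskii formula is applied block by block to the kernel of $h_{m,l}(t)\phi_m\tfrac{\sin(t\Lambda)}{\Lambda}\phi_l$, the result is differentiated $n-1$ times in $\theta$ to obtain a summable series over the dyadic indices $m,l$, and only then integrated back to produce the structure $B(\theta)\theta^{n-2}\log\theta+C(t,\theta)$ with finite-rank $\partial_\theta^jB(0)$. This is the technical heart of the theorem in even dimension and your sketch does not touch it.

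Even for odd $n$, the constant $1/4$ in the threshold is not automatic from finite propagation speed. It comes from the dyadic partition (Lemma~\ref{l1}): Huygens kills $\phi_mU_0(kT)\phi_l$ once $k>2^{m+1}+2^{l+1}$, and the weight budget $e^{-\delta 2^{m-1}T}e^{-\delta 2^{l-1}T}$ then yields precisely the quarter since $2^{m-1}+2^{l-1}=\tfrac14(2^{m+1}+2^{l+1})$. You correctly flag this as the main obstacle, but the argument has to be carried out, and once it is you have essentially reproduced the paper's Theorem~\ref{t4} in the special case $t=T$.
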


Let us recall that in  \cite{P2}, the author established the meromorphic continuation of the cut-off resolvent $R_{\psi_1,\psi_2}(\theta)$ by applying some arguments of \cite{Vai}. Since the analysis of \cite{Vai} is mainly based on the fact that the perturbations are compactly supported in $x$, we can not apply the arguments used in \cite{P2}. Nevertheless, we follow the analysis of \cite{Vai} and we use the transformation $F'$ defined by
\[F'[\phi(t,x)](t,\theta)=e^{\frac{it\theta}{T}}\left(\sum_{k=-\infty}^{+\infty}\phi(t+kT,x)e^{ik\theta}\right),\quad \phi\in\mathcal C^\infty_0(\R^{1+n})\]
which plays the role of the Fourier transform with respect to $t$ for non-stationary and time-periodic problems. Let us remark (see Lemma 2 in \cite{Vai} and Section 1) that an application of $F'$ transforms the solutions of \eqref{problem}, whose energy can grow exponentially in time (see \cite{CPR}), into time-periodic functions.  As in \cite{Vai}, in our analysis, this property  will play a crucial role. 

We prove Theorem \ref{t3} in tree steps. First, in Section 1, we reduce the problem using integrals equations and we show that the meromorphic continuation of $R(\theta)$ follows from an analytic   continuation  of the transformation $F'$ with respect to $\theta$ of the solutions of the free wave equation extended by $0$ for $t<0$. Then, in Section 2, we establish this  analytic continuation. Finally, in Section 3, we conclude by applying the analytic Fredholm theorem and some results of \cite{Vai}. 

Let us observe that the resolvent $R(\theta)$ is defined in homogeneous Sobolev space. In our approach, we need to consider the resolvent
$\left( U(T)-e^{-i\theta}\right)^{-1}$
acting in weighted inhomogeneous Sobolev space. For this purpose, we exploit the fact that, for $r>0$, the multiplication  with $e^{-r\left\langle x\right\rangle}$  maps continuously $\dot{H}^1(\R^n)$ into $H^1(\R^n)$ for $n\geq3$. Notice that for $n\leq 2$ this property does not hold. Therefore, our analysis does not work for $n=2$. To treat the case $n=2$, one can consider the resolvent
\[e^{-D\left\langle x\right\rangle}(U(T)-e^{-i\theta})^{-1}e^{-D\left\langle x\right\rangle}\ :\dot{\mathcal{H}}_\gamma({\R}^n)\to \dot{\mathcal{H}}_\gamma({\R}^n)\]
with $\gamma<1$, and repeat our arguments.

According to  \cite{Ki4} and the proof of the  Main Theorem in \cite{Vai}, from Theorem \ref{t3} we can establish the following results:
\begin{enumerate}
\item[$\rm1.$] Assume $n\geq3$  odd 
and let $\theta_1,\ldots,\theta_N$ be the poles of $R(\theta)$ lying in \[\{\theta\in\C : \im(\theta)\geq -\frac{DT}{4}+\epsilon_1T,\ -\pi\leq\re(\theta)<\pi\}\] with $0<\epsilon_1<\frac{D}{4}$. Then we have the representation
\begin{equation}\label{asym1}e^{-D\left\langle x\right\rangle}U(kT,0)e^{-D\left\langle x\right\rangle}=-i\left[\sum_{j=1}^{N} \underset{\theta=\theta_j}{\textrm{res}}\left(e^{-i(k+1)\theta}R(\theta)\right)\right]+G_k,\quad k\in\mathbb N,\end{equation}
where the remainder term $G_k$ satisfies
\[\norm{G_k}_{\mathcal L(\B)}\lesssim e^{(-\frac{D}{4}+\epsilon_1)kT}.\]
Moreover, from \eqref{asym1}, we deduce the asymptotic expansion of $e^{-(D+\epsilon)\left\langle x\right\rangle}U(t,0)e^{-D\left\langle x\right\rangle}f$  as $t\to+\infty$  with a remainder term whose energy is bounded with respect to $e^{(-\frac{D}{4}+\epsilon_1)t}$.
\item[$\rm2.$] Assume $n\geq4$ even and let $\theta_1,\ldots,\theta_N$ be the poles of $R(\theta)$ lying in \[\{\theta\in\C : \im(\theta)\geq 0,\ -\pi\leq\re(\theta)<\pi,\ \theta\neq0\}.\] Now, remove from \eqref{T3} the terms at which $i\geq0$ and $j=0$ and let $B\theta^\mu(\log(\theta))^\nu$ be the leading term of the remaining term with non-zero coefficients. Then, we obtain the representation
\begin{equation}\label{asym2}e^{-D\left\langle x\right\rangle}U(kT,0)e^{-D\left\langle x\right\rangle}=-i\left[\sum_{j=1}^{N} \underset{\theta=\theta_j}{\textrm{res}}\left(e^{-i(k+1)\theta}R(\theta)\right)\right]+\lambda_k B (1+kT)^{l}(\ln(2+kT))^{m} +G_k,\quad k\in\mathbb N,\end{equation}
where $\lambda_k\in\mathbb C$ is bounded with respect to $k$, $(l,m)=(-\mu-1,\nu)$ for $\mu<0$ and $(l,m)=(-\mu-1,-\nu)$ for $\mu\geq0$, the remainder term $G_k$ satisfies
\[\norm{G_k}_{\mathcal L(\B)}=\underset{k\to+\infty}o\left((1+kT)^{l}(\ln(2+kT))^{m}\right).\]
In addition, applying \eqref{asym1}, we obtain the asymptotic expansion of $e^{-(D+\epsilon)\left\langle x\right\rangle}U(t,0)e^{-D\left\langle x\right\rangle}f$  as $t\to+\infty$  with a remainder term whose energy is bounded with respect to $t^{l}\ln(t)^{m}$.
\end{enumerate}

From the meromorphic continuation of $R(\theta)$, we deduce sufficient conditions for   {\bf local energy decay} and {\bf global Strichartz estimates}. More precisely, we have seen the link between the meromorphic continuation of  $R(\theta)$ and the asymptotic expansion of $e^{-(D+\epsilon)\left\langle x\right\rangle}U(t,0)e^{-D\left\langle x\right\rangle}f$ as $t\to+\infty$. Consequently, it seems natural to consider the meromorphic continuations of  $R(\theta)$ that imply dispersive estimates.  Introduce the following condition.
\begin{enumerate}
\item[$\rm(H3)$] The  resolvent $R(\theta)$ has no poles on  $\{\theta\in\C\ :\ \im(\theta)\geq0\}$ for $n$ odd and on $\{\theta\in\C\ :\ \im(\theta)\geq0,\ \theta\notin2\pi\mathbb Z\}$ for $n$ even. Moreover, for $n$ even we have
\[\limsup_{\substack{\lambda\to0 \\ \im(\lambda)>0}}\Vert R(\lambda)\Vert<\infty.\]
\end{enumerate}
Assuming $\rm(H3)$ fulfilled and applying the arguments  in \cite{Ki3}, \cite{Ki4} and \cite{P2}, in Subsection 4.1 we  establish 
 a local energy decay of the form 
\begin{equation} \label{local}\norm{e^{-(D+\epsilon)\left\langle x\right\rangle}U(t,\tau)e^{-(D+\epsilon)\left\langle x\right\rangle}}_{\mathcal L(\B)}\lesssim p(t-\tau),\quad t\geq \tau
\end{equation}
with $p(t)\in L^1(\R^+)$. More precisely, we obtain the following.
\begin{Thm}\label{t1} Assume $\rm(H1)$, $\rm(H2)$ and $\rm(H3)$ fulfilled and let $n\geq3$. Then, we have \eqref{local} with 
\begin{equation}\label{local1}\left\{\begin{aligned}p(t)&=e^{-\delta t}\textrm{ for $n\geq3$ odd},\\  p(t)&= \frac{1}{(t+1)\ln^2(t+e)}\textrm{ for $n\geq4$ even}.\end{aligned}\right.\end{equation}
\end{Thm}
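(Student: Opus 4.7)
The approach is to derive Theorem~\ref{t1} from Theorem~\ref{t3} by deforming the contour in the inversion formula \eqref{inv}. This parallels the strategy sketched in items~1--2 of the introduction (from which \eqref{asym1}--\eqref{asym2} were read off); under (H3) the resonance/residue sum is empty, and only the remainder survives.

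For $n$ odd, (H3) together with Theorem~\ref{t3} tells us that $R(\theta)$ is holomorphic on the strip $\{\im(\theta) > -\delta T\}$ for some $0 < \delta < D/4$. Shifting the contour in \eqref{inv} from $[-\pi + iBT,\, \pi + iBT]$ to $[-\pi - i\delta T,\, \pi - i\delta T]$ is legal, and the vertical segments at $\re(\theta) = \pm\pi$ cancel by the $2\pi$-periodicity $R(\theta + 2\pi) = R(\theta)$ (inherited from $e^{-i(\theta + 2\pi)} = e^{-i\theta}$). This yields $\|e^{-D\left\langle x\right\rangle}U(kT,0)e^{-D\left\langle x\right\rangle}\|_{\mathcal L(\B)} \lesssim e^{-\delta kT}$.

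For $n$ even, $R(\theta)$ can have a branch point at each $2\pi\ell$, $\ell\in\mathbb Z$, but (H3) says it stays bounded there. Using \eqref{T3}, the singular part of $R$ near $0$ consists only of terms $R_{i,j}\theta^{i}(\log\theta)^{-j}$ with $i\geq 1$, or with $i=0$ and $j\geq 1$; all the $R_{i,j}$ with $i<0$ or $j\neq 0$ have finite rank. Deform the contour to one lying at height $-\delta T$ outside small indentations around the points $2\pi\mathbb Z$ that avoid the cuts $2\pi\mathbb Z+i\R^-$. The oscillatory integral analysis of \cite{Vai} (see also Lemmas~3--4 of \cite{Ki4}) bounds the contribution of each singular term, with the slowest-decaying one $(\log\theta)^{-1}$ giving the sharp rate $(kT+1)^{-1}\log^{-2}(kT+e)$.

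To pass from $kT$ to arbitrary $t,\tau$, write $t - \tau = kT + r$ with $0 \leq r < T$ and combine the periodicity $U(s+T,\sigma+T) = U(s,\sigma)$ with the energy bound \eqref{energy} over $[0,T]$; the extra weight $e^{-\epsilon\left\langle x\right\rangle}$ absorbs this bounded propagation factor via finite speed of propagation, producing \eqref{local}. The main difficulty is the $n$ even case: mere pointwise boundedness of $R$ at $\theta=0$ is not enough to yield an integrable decay rate, and one must exploit the full structure of \eqref{T3} --- in particular that only integer powers of $\theta$ and of $\log\theta$ appear with finite-rank coefficients on the singular terms --- to extract the sharp $(t\log^2 t)^{-1}$ rate. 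This is precisely where we lean on the machinery already developed in \cite{Vai} and \cite{Ki4}.
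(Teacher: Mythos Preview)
Your proposal is correct and follows essentially the same route as the paper: deduce the discrete-time bound $\|e^{-D\langle x\rangle}U(kT,0)e^{-D\langle x\rangle}\|\lesssim p(kT)$ from Theorem~\ref{t3} and (H3) via contour deformation in \eqref{inv} (the paper simply cites Lemmas~3--4 of \cite{Ki4} for this), and then pass to arbitrary $t,\tau$ by the factorization $U(t,\tau)=U(s,0)\,U((k-1)T,0)\,U(T,s')$. The only refinement is that the paper makes the last step precise via Lemma~\ref{l4}, which shows $e^{-(D+\epsilon)\langle x\rangle}U(t,0)e^{D\langle x\rangle}$ and $e^{D\langle x\rangle}U(t,0)e^{-(D+\epsilon)\langle x\rangle}$ are bounded on $\B$ for $0\leq t\leq T$; this is proved by Duhamel from the corresponding free estimate rather than directly from finite speed of propagation, so your attribution there is slightly imprecise but the mechanism is the same.
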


The decay of local energy for time dependent perturbations has been investigated by many authors (see \cite{BP1}, \cite{CS}, \cite{P2} and \cite{T5}). The main hypothesis is that the perturbations are {\bf non-trapping} (see  \cite{CS}, \cite{Ki1} and \cite{T6}). In contrast to the stationary case (see \cite{Mel}, \cite{Met}, \cite{Va} and \cite{Vo}) the non-trapping condition is not sufficient for a local energy decay. In particular, the problem \eqref{problem}
is non-trapping but we may have solutions with exponentially growing local energy. Indeed, if $R(\theta)$ has a pole $\theta_0\in\C$ with $\im(\theta_0)>0$, from the representations \eqref{asym1} and \eqref{asym2}, one may show that
\[\norm{e^{-(D+\epsilon)\left\langle x\right\rangle}U(t,0)e^{-(D+\epsilon)\left\langle x\right\rangle}}_{\mathcal L(\B)}\gtrsim e^{\frac{\im(\theta_{0})}{T}t},\quad t>0\]
and deduce from this estimate the existence of a solution whose energy grows exponentially.
In \cite{CPR}, the authors established an example of positive potential such that this phenomenon occurs for problem \eqref{problem}. In our case assumption $\rm(H3)$  excludes  the existence of such solutions. Moreover, according to \eqref{asym1} and \eqref{asym2}, $\rm(H3)$ is an optimal condition for a local energy decay \eqref{local} with $p(t)$ satisfying \eqref{local1} (see also Lemma 3 and Lemma 4 of \cite{Ki4}).

Let us recall that the local energy decay is a crucial point (see \cite{Ki1}, \cite{Ki3}, \cite{Met}, \cite{MT} and \cite{P2}) to obtain estimates of the form
\begin{equation}\label{1}\Vert u\Vert_{L_t^p({\R}^+,L_x^q({R}^n))}+ \norm{\left(u, \partial_t(u)\right)}_{L^\infty_t({\R}^+,\dot{\mathcal{H}}_\gamma({\R}^n))}\leq C(p,q,n,\rho,T,\gamma)\left(\Vert f\Vert_{\dot{\mathcal{H}}_\gamma({\R}^n)}+\norm{F}_{L^{\tilde{p}}_t(\R^+,L^{\tilde{q}}_x(\R^n))}\right).\end{equation}
Estimates \eqref{1} are called {\bf global Minkovski Strichartz estimates} and they  are important in order to prove existence and uniqueness
results  for non-linear equations (see for examples \cite{KT}, \cite{LS}, \cite{S1} and \cite{S2}). Following the results of \cite{P2}, in Subsection 4.2 we apply estimate \eqref{local} and prove estimates \eqref{1} for solutions of \eqref{problem}. We obtain the following.
\begin{Thm}\label{t2} Assume $\rm(H1)$, $\rm(H2)$ and $\rm(H3)$ fulfilled and let $n\geq3$. Let  $1\leq\tilde{p},\tilde{q}\leq2\leq p,q\leq\infty$, $0<\gamma\leq1$, $p>2$,  $q<\frac{2(n-1)}{n-3}$ and $\tilde{q}'<\frac{2(n-1)}{n-3}$ satisfy
\begin{equation}\label{2}\frac{1}{p}+\frac{n}{q}=\frac{n}{2}-\gamma=\frac{1}{\tilde{p}}+\frac{n}{\tilde{q}}-2,\quad \frac{1}{p} \leq\frac{(n-1)(q-2)}{4q},\quad \frac{1}{\tilde{p}'} \leq\frac{(n-1)(\tilde{q}'-2)}{4\tilde{q}'}.\end{equation}
Then, the solution $u$ of \eqref{problem} with  $\tau=0$ satisfies \eqref{1}.\end{Thm}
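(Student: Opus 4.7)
The plan is to reduce the Strichartz estimate for the perturbed problem to that for the free wave equation, exploiting the local energy decay of Theorem \ref{t1} to absorb the potential term. Writing the solution of \eqref{problem} with $\tau = 0$ by Duhamel relative to the free propagator $U_0$, one has
\[
(u, \partial_t u)(t) = U_0(t) f + \int_0^t U_0(t-s)\bigl(0,\, F(s) - V(s,\cdot) u(s)\bigr)^\top ds.
\]
The classical Keel--Tao Strichartz estimates for $U_0$ at regularity $\gamma$, which hold under the admissibility conditions \eqref{2} together with the restrictions $q,\tilde{q}' < 2(n-1)/(n-3)$, give
\[
\|u\|_{L^p_t L^q_x} + \|(u, \partial_t u)\|_{L^\infty_t \dot{\mathcal{H}}_\gamma} \lesssim \|f\|_{\dot{\mathcal{H}}_\gamma} + \|F\|_{L^{\tilde{p}}_t L^{\tilde{q}}_x} + \|V u\|_{L^{\tilde{p}}_t L^{\tilde{q}}_x}.
\]
The whole matter therefore reduces to controlling $\|Vu\|_{L^{\tilde{p}}_t L^{\tilde{q}}_x}$ by the right-hand side of \eqref{1}.

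By (H1) and Hölder's inequality in $x$, using that $e^{-\epsilon\langle x\rangle}$ lies in every Lebesgue space,
\[
\|V(s,\cdot) u(s)\|_{L^{\tilde{q}}_x} \lesssim \|e^{-(D+\epsilon)\langle x\rangle} u(s)\|_{L^2_x}.
\]
To bound the right-hand side, I would write $u(s) = [U(s,0) f]_1 + \int_0^s [U(s,\sigma)(0, F(\sigma))^\top]_1 \, d\sigma$, insert the factor $e^{(D+\epsilon)\langle x\rangle} e^{-(D+\epsilon)\langle x\rangle}$ beside $f$ and beside $F(\sigma)$, and invoke the local energy decay \eqref{local}. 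Using that multiplication by $e^{-r\langle x\rangle}$ maps $\dot{H}^\gamma$ continuously into $H^\gamma$ for $\gamma < n/2$, and extending Theorem \ref{t1} from $\gamma = 1$ to $\gamma \in (0,1]$ by interpolation with the energy growth bound \eqref{energy}, one obtains
\[
\|e^{-(D+\epsilon)\langle x\rangle} u(s)\|_{L^2_x} \lesssim p(s)\, \|f\|_{\dot{\mathcal{H}}_\gamma} + \int_0^s p(s-\sigma)\, \|F(\sigma)\|_{L^{\tilde{q}}_x}\, d\sigma.
\]
Since $p \in L^1(\R^+)$ by \eqref{local1}, Young's convolution inequality in $t$ yields $\|Vu\|_{L^{\tilde{p}}_t L^{\tilde{q}}_x} \lesssim \|f\|_{\dot{\mathcal{H}}_\gamma} + \|F\|_{L^{\tilde{p}}_t L^{\tilde{q}}_x}$, closing the estimate.

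The main obstacle is the careful book-keeping required to pass between the weighted $L^2_x$ norm natural to local energy decay and the $L^{\tilde{q}}_x$ norm native to the Strichartz pair, and in particular the extension of Theorem \ref{t1} from the energy level $\dot{\mathcal{H}}_1$ down to arbitrary regularity $\gamma \in (0,1]$ via interpolation with \eqref{energy}. One must also verify that the exponential factor $e^{-(2D+3\epsilon)\langle x\rangle}$ of (H1) is sufficient both to absorb $L^{\tilde{q}}_x$ into a weighted $L^2_x$ via Hölder and simultaneously to dominate $e^{(D+\epsilon)\langle x\rangle}F$ back in $L^{\tilde{q}}_x$. No additional harmonic analysis is required: the admissibility conditions \eqref{2} are precisely those under which the free Strichartz estimates hold at regularity $\gamma$, so the entire analytical content is carried by Theorem \ref{t1} (itself resting on the meromorphic continuation of Theorem \ref{t3}) combined with the standard free wave estimates.
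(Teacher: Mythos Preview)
Your argument has a genuine gap at the point where you ``insert the factor $e^{(D+\epsilon)\langle x\rangle}e^{-(D+\epsilon)\langle x\rangle}$ beside $f$'' (and similarly beside $F(\sigma)$) in order to invoke Theorem~\ref{t1}. The local energy decay \eqref{local} is a \emph{two-sided} weighted estimate: it controls $e^{-(D+\epsilon)\langle x\rangle}U(t,\tau)e^{-(D+\epsilon)\langle x\rangle}$, so it requires $e^{(D+\epsilon)\langle x\rangle}f\in\dot{\mathcal H}_\gamma$ and $e^{(D+\epsilon)\langle x\rangle}F(\sigma)\in\dot H^{\gamma-1}$. But $f\in\dot{\mathcal H}_\gamma$ and $F\in L^{\tilde p}_tL^{\tilde q}_x$ are arbitrary, with no spatial decay whatsoever, so neither of these weighted norms is finite. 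The exponential decay in (H1) belongs to $V$, not to $f$ or $F$; it cannot be transferred to them. Consequently the claimed bound $\|e^{-(D+\epsilon)\langle x\rangle}u(s)\|_{L^2_x}\lesssim p(s)\|f\|_{\dot{\mathcal H}_\gamma}+\int_0^s p(s-\sigma)\|F(\sigma)\|_{L^{\tilde q}_x}\,d\sigma$ does not follow from Theorem~\ref{t1}.

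The paper closes this gap through an ingredient you explicitly dismissed: a one-sided local smoothing estimate for the \emph{free} flow (Lemma~\ref{l5}, the Smith--Sogge estimate), which gives $\|e^{-r\langle x\rangle}U_0(\cdot)f\|_{L^2_t\mathcal H_\gamma}\lesssim\|f\|_{\dot{\mathcal H}_\gamma}$ with weight only on the output. The decomposition $u=u_0+v$ is then essential: $u_0$ carries the general data and forcing and is handled by free Strichartz plus Lemma~\ref{l5}; $v$ has zero data and forcing $-Vu_0$, which \emph{is} exponentially localized by (H1), so Proposition~\ref{p2} (built from Theorem~\ref{t1} plus Lemma~\ref{l5}) applies. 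Finally the Duhamel integral for $v$ is estimated in $L^p_tL^q_x$ not by placing $Vu$ in $L^{\tilde p}_tL^{\tilde q}_x$ but via the dual of local smoothing (the $S^*$ bound), which trades a weighted $L^2_t\dot H^\gamma_x$ norm on the source for the Strichartz norm on the solution. Your assertion that ``no additional harmonic analysis is required'' is precisely where the argument breaks.
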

It is well known (see \cite{KT} and Corollary 3.2 of \cite{LS}) that for the solution  of \eqref{problem} with $V=0$ and $\tau=0$,   estimates \eqref{1} hold for $0<\gamma\leq1$  if
 $1\leq\tilde{p},\tilde{q}\leq2\leq p,q\leq\infty$, with  $q<\frac{2(n-1)}{n-3}$ and $\tilde{q}'<\frac{2(n-1)}{n-3}$   satisfy \eqref{2}. Since we apply the Krist-Chiselev lemma in the proof of Theorem \ref{t2} (see \cite{P1}) we must exclude  $p=2$. In fact, the result of Theorem \ref{t2} holds for any values of $p,q,\gamma,\tilde{p},\tilde{q}$ for which the solutions of the free wave equation ($V(t,x)=0$) satisfy \eqref{1} as long as $p>2$ and $0<\gamma\leq1$.
 
 For the assumption $\rm(H3)$, in Subsection 4.3 we give examples of potentials $V(t,x)$ such that $\rm(H1)$, $\rm(H2)$ and $\rm(H3)$ are fulfilled. 
 
 We  like to mention that many authors proved local energy decay as well as global Strichartz estimates for the Schrödinger equation with non-compactly supported in $x$  and time-periodic potentials (see \cite{GJY}, \cite{G}, \cite{Ya} and \cite{Yo}). It seems that our paper is  the first work where one treats local energy decay and global Strichartz estimates for the wave equations with  perturbations depending on $(t,x)$  which are not compactly supported with respect to $x$, without any assumptions on the size of the perturbations (see the work  of \cite{MT} and \cite{MT1} for small perturbations of the Laplacian).

\section{Reduction of the problem}

 For all $\gamma\in\R$, we denote by $\mathcal H_\gamma(\R^n)$ the space
\[\mathcal H_\gamma(\R^n)=H^{\gamma}(\R^n)\times H^{\gamma-1}(\R^n).\] 
Since, for $n\geq3$ and for $0<\delta<D$, the multiplication by $e^{-\delta\left\langle x\right\rangle}$ is a bounded operator from $\B$ to $\Bb$, estimate \eqref{energy} implies that, for all $t>0$, the operator 
\[e^{-\delta\left\langle x\right\rangle}U(t,0)e^{-\delta\left\langle x\right\rangle}:\Bb\to\Bb\]
is bounded. In   this section, as well as in Sections  2 and 3, we assume that
\begin{equation}\label{3} U(t,\tau)=0\quad\textrm{for}\quad t<\tau \quad\textrm{and}\quad U_0(t)=0\quad\textrm{for}\quad t<0.\end{equation}
This assumption  and estimate \eqref{energy} allow  us to consider, for $\im(\theta)\geq BT$, with $B>0$ sufficiently large, and for $0<\delta<D$, the families of bounded operators
\[F'\left[e^{-\delta\left\langle x\right\rangle}U(t,0)e^{-\delta\left\langle x\right\rangle}\right](t,\theta)=e^{\frac{it\theta}{T}}\sum_{k=-\infty}^{+\infty}\left(e^{-\delta\left\langle x\right\rangle}U(t+kT,0)e^{-\delta\left\langle x\right\rangle}e^{ik\theta}\right)\ :\ \Bb\to\Bb\]
and 
\[F'\left[e^{-\delta\left\langle x\right\rangle}U_0(t)e^{-\delta\left\langle x\right\rangle}\right](t,\theta)=e^{\frac{it\theta}{T}}\sum_{k=-\infty}^{+\infty}\left(e^{-\delta\left\langle x\right\rangle}U_0(t+kT)e^{-\delta\left\langle x\right\rangle}e^{ik\theta}\right)\ :\ \Bb\to\Bb.\]
The goal of this section is to show that the meromorphic continuation of $R(\theta)$ described in\\
 Theorem \ref{t3}
follows from an analytic continuation with respect to $\theta$  of
\[F'\left[e^{-\delta\left\langle x\right\rangle}U_0(t)e^{-\delta\left\langle x\right\rangle}\right](t,\theta)\ :\ \Bb\to\Bb,\quad 0<\delta<D,\quad t\in\R.\]
For this purpose, we will use the  Duhamel's principle to establish a link between the propagator $U(t,0)$ of the disturbed wave equation and the propagator $U_0(t)$ of the free wave equation. Then, applying some arguments of Vainberg \cite{Vai}, we will deduce the connection between the transformation $F'$ of both of these propagators. We start by proving that the meromorphic continuation of $R(\theta)$ defined as a family of bounded operator in the homogeneous Sobolev space $\B$ can be deduced from a meromorphic continuation of the resolvent $\left(U(T)-e^{-i\theta}\right)^{-1}$ acting in weighted inhomogeneous Sobolev space.

Let us introduce, for $0<\delta<D$  and for $\im(\theta)>BT$ with $B>0$ sufficiently large, the resolvent
\[e^{-\delta\left\langle x\right\rangle}(U(T)-e^{-i\theta})^{-1}e^{-\delta\left\langle x\right\rangle}.\]
Since, for $n\geq3$, the multiplication by $e^{-\delta\left\langle x\right\rangle}$ is a bounded operator from $\B$ to $\Bb$, the family
\[e^{-\delta\left\langle x\right\rangle}(U(T)-e^{-i\theta})^{-1}e^{-\delta\left\langle x\right\rangle}: \Bb\to \Bb.\]
 is a family of bounded operators mapping $\Bb$ to $\Bb$. 
To prove the meromorphic continuation of $R(\theta)$,
we will use the following result.
\begin{lem}\label{l8} Assume that for all $0<\delta<D$,  the family\[e^{-\delta\left\langle x\right\rangle}(U(T)-e^{-i\theta})^{-1}e^{-\delta\left\langle x\right\rangle}: \Bb\to \Bb\] admits a meromorphic continuation,  with respect to $\theta$,  to $\{ \theta\in\mathbb C\ :\ \im(\theta)>-\frac{\delta T}{4}\}$ for $n$ odd and to $\{ \theta\in\mathbb C\ :\ \im(\theta)>-\frac{\delta T}{4},\ \theta\notin2\pi\mathbb Z+i\R^-\}$ for $n$ even, with an asymptotic expansion as $\theta\to0$ which take the form \eqref{T3}. Then, 
\[R(\theta): \B\to \B\]
admits the meromorphic continuation described in Theorem \ref{t3}.\end{lem}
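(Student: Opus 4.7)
The plan is to transfer the meromorphic structure from the weighted inhomogeneous resolvent to $R(\theta)$ by splitting the exponential weight. Fix any $\delta_1\in(0,D)$ and set $\delta_2:=D-\delta_1\in(0,D)$, so that $e^{-D\langle x\rangle}=e^{-\delta_2\langle x\rangle}e^{-\delta_1\langle x\rangle}$. Then for $\im(\theta)>BT$ one has the identity
\begin{equation*}
R(\theta)=e^{-\delta_2\langle x\rangle}\bigl[e^{-\delta_1\langle x\rangle}(U(T)-e^{-i\theta})^{-1}e^{-\delta_1\langle x\rangle}\bigr]e^{-\delta_2\langle x\rangle}
\end{equation*}
as operators on $\B$, which is well defined since the inner resolvent makes sense on $\B$ for such $\theta$ by a Neumann series argument based on \eqref{energy}.

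The essential point is that multiplication by $e^{-\delta_2\langle x\rangle}$ is bounded in both directions: from $\B$ into $\Bb$ (noted in the paper, using $n\geq 3$ so that the Schwartz weight $e^{-\delta_2\langle x\rangle}$ maps $\dot{H}^1(\R^n)$ into $H^1(\R^n)$), and conversely from $\Bb$ back into $\B$ (the Schwartz weight preserves $H^1$, and there is a trivial continuous embedding $H^1(\R^n)\hookrightarrow\dot{H}^1(\R^n)$ since $\||\xi|\hat f\|_{L^2}\leq\|(1+|\xi|^2)^{1/2}\hat f\|_{L^2}$; the $L^2$ component is untouched). Under the hypothesis, the bracketed middle factor extends meromorphically to some family $M_{\delta_1}(\theta):\Bb\to\Bb$ on $\{\im(\theta)>-\delta_1 T/4\}$ (with the exclusion of $2\pi\mathbb Z+i\R^-$ when $n$ is even). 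Composing with the bounded outer factors, the formula
\begin{equation*}
R(\theta):=e^{-\delta_2\langle x\rangle}M_{\delta_1}(\theta)e^{-\delta_2\langle x\rangle}:\B\to\B
\end{equation*}
provides a meromorphic extension on the same domain; finite-rankness of Laurent coefficients is preserved by the sandwich, since the composition of a finite-rank operator with bounded operators is finite-rank. For $n$ even, substituting the assumed expansion of form \eqref{T3} for $M_{\delta_1}(\theta)$ near $\theta=0$ yields an expansion of $R(\theta)$ of the same form with coefficients $R_{i,j}=e^{-\delta_2\langle x\rangle}(M_{\delta_1})_{i,j}e^{-\delta_2\langle x\rangle}$, which are finite-rank exactly when $(M_{\delta_1})_{i,j}$ is.

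To reach the full half-plane $\{\im(\theta)>-DT/4\}$ claimed in Theorem~\ref{t3}, observe that $\delta_1\in(0,D)$ is arbitrary: every $\theta$ with $\im(\theta)>-DT/4$ lies in $\{\im(\theta)>-\delta_1 T/4\}$ for some $\delta_1<D$ sufficiently close to $D$, so the union of these nested domains exhausts the target half-plane. Uniqueness of meromorphic continuation guarantees that the extensions obtained from different choices of $\delta_1$ coincide on overlaps and glue to a single globally defined meromorphic family $\B\to\B$. There is no deep analytic obstacle here; the lemma is essentially an algebraic transfer argument, and the only point requiring attention is to confirm that taking the union over $\delta_1\to D^-$ really gives the sharp open half-plane rather than a proper subset, which follows from the nesting and openness of the $\delta_1$-dependent domains.
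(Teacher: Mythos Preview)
Your argument is correct and follows essentially the same route as the paper: split $e^{-D\langle x\rangle}=e^{-(D-\delta)\langle x\rangle}e^{-\delta\langle x\rangle}$, sandwich the hypothesized meromorphic family $\Bb\to\Bb$ between the outer weights, and take the union over $\delta\nearrow D$. If anything, you supply more justification than the paper does (the embedding $H^1\hookrightarrow\dot H^1$ for the left factor, preservation of finite rank under composition, and the gluing by uniqueness of meromorphic continuation), all of which are implicit in the paper's terse proof.
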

\begin{proof} Let $0<\delta<D$. Notice that, for $\im(\theta)>BT$, we have
\[R(\theta)=e^{-(D-\delta)\left\langle x\right\rangle}\left(e^{-\delta\left\langle x\right\rangle}(U(T)-e^{-i\theta})^{-1}e^{-\delta\left\langle x\right\rangle}\right)e^{-(D-\delta)\left\langle x\right\rangle}.\]
Since the multiplication by $e^{-(D-\delta)\left\langle x\right\rangle}$ is a bounded operator from $\B$ to $\Bb$, for $\theta\in\{ \theta\in\mathbb C\ :\ \im(\theta)>-\frac{\delta T}{4}\}$ when $n$ is odd and  $\theta\in\{ \theta\in\mathbb C\ :\ \im(\theta)>-\frac{\delta T}{4},\ \theta\notin2\pi\mathbb Z+i\R^-\}$ when $n$ is even, $R(\theta)$ admits the following meromorphic continuation
\[R(\theta)=e^{-(D-\delta)\left\langle x\right\rangle}\left(e^{-\delta\left\langle x\right\rangle}(U(T)-e^{-i\theta})^{-1}e^{-\delta\left\langle x\right\rangle}\right)e^{-(D-\delta)\left\langle x\right\rangle}\ :\B\to\B.\]
Hence, for all $0<\delta<D$,  $R(\theta)$ admits a meromorphic continuation to $\{ \theta\in\mathbb C\ :\ \im(\theta)>-\frac{\delta T}{4}\}$ for $n$  odd and to $\theta\in\{ \theta\in\mathbb C\ :\ \im(\theta)>-\frac{\delta T}{4},\ \theta\notin2\pi\mathbb Z+i\R^-\}$ when $n$ is even, and we deduce\\  Theorem \ref{t3}.\end{proof}
Following Lemma \ref{l8}, we set $0<\delta<D$ and we will establish the meromorphic continuation of \[e^{-\delta\left\langle x\right\rangle}(U(T)-e^{-i\theta})^{-1}e^{-\delta\left\langle x\right\rangle}: \Bb\to \Bb\] 
described in Lemma \ref{l8}.
Let us recall (see \cite{Ki4}) that, for $\im(\theta)\geq BT$ with $B>0$ sufficiently large, we have
\begin{equation}\label{4}e^{-\delta\left\langle x\right\rangle}(U(T)-e^{-i\theta})^{-1}e^{-\delta\left\langle x\right\rangle}=-e^{i\theta}F'\left[e^{-\delta\left\langle x\right\rangle}U(t,0)e^{-\delta\left\langle x\right\rangle}\right](T,\theta).\end{equation}
Thus, for our purpose it  suffices to establish  the meromorphic continuation of the family 
\[F'\left[e^{-\delta\left\langle x\right\rangle}U(t,0)e^{-\delta\left\langle x\right\rangle}\right](t,\theta):\ \Bb\to\Bb\]
with respect to $\theta$. 
We will show that the meromorphic continuation of 
\[F'\left[e^{-\delta\left\langle x\right\rangle}U(t,0)e^{-\delta\left\langle x\right\rangle}\right](t,\theta):\ \Bb\to\Bb\]
follows from an analytic continuation of
\[F'\left[e^{-\delta\left\langle x\right\rangle}U_0(t)e^{-\delta\left\langle x\right\rangle}\right](t,\theta):\ \Bb\to\Bb.\]
Now, let us define two  Hilbert spaces and let us recall some results of  \cite{Vai}.
\begin{defi}
  Let $\gamma\in\R$, $A\in\R$. We denote by $\mathcal H_{\gamma}^A(\R^{1+n})$  the closure of\\
   $\mathcal C_0^\infty(]0,+\infty[\times\R^{n})\times\mathcal C_0^\infty(]0,+\infty[\times\R^{n})$  with respect to the norm $\norm{\cdot}_{\mathcal H_{\gamma}^A}$. Here, $\norm{\cdot}_{\mathcal H_{\gamma}^A}$ is defined by
  \[\norm{\left(\phi_1,\phi_2\right)}^2_{\mathcal H_{\gamma}^A}=\norm{e^{-At}\phi_1}^2_{H^{\gamma}(\R^{1+n})}+\norm{e^{-At}\phi_2}^2_{H^{\gamma-1}(\R^{1+n})}.\]
\end{defi} 
\begin{rem}
To understand the use of the spaces $\mathcal H_{\gamma}^A(\R^{1+n})$, let $h\in\mathcal C^\infty(\R)$ be such that\\
$\textrm{supp}(h)\subset]0,+\infty[$. Then, estimate \eqref{energy} implies that, for $A>k_1$,  $h(t)e^{-\delta\left\langle x\right\rangle}U(t,0)e^{-\delta\left\langle x\right\rangle}$ is a bounded operator from $\Bb$ to $\mathcal H_{1}^A(\R^{1+n})$.\end{rem}

\begin{defi} Let $\gamma\in\R$. We denote by $H^\gamma_{per}(\R^{1+n})$  the set of functions $T$-periodic with respect to $t$ lying in the closure of the set
   \[\{ \phi\in \mathcal C^\infty_t(\R,\CI)\ :\ \phi(t,x)\textrm{ is }T-\textrm{periodic with respect to }t\}\]
  with respect to the norm
  \[\norm{\cdot}_{H^{\gamma}(]0,T[\times\R^n)}.\]
  We set $\mathcal{H}_{\gamma,per}(\R^{1+n})=H^\gamma_{per}(\R^{1+n})\times H^{\gamma-1}_{per}(\R^{1+n})$.\end{defi}
  \begin{rem}
  Let $\theta\in\mathbb C$ and $A>0$ be such that $\im(\theta)>AT$. Then, Vainberg proved in Lemma 2 of \cite{Vai} that the operator
  \begin{equation}\label{6}\begin{array}{rccl} F'_\theta: & \mathcal{H}^{A}_{\gamma}(\R^{1+n})& \to & \mathcal{H}_{\gamma,per}(\R^{1+n}), \\
 \ \\ & \phi(t,.) & \mapsto &  F'(\phi)(t,\theta) \end{array}\end{equation}
 is bounded. This result shows the remarkable property of $F'$. Namely, $F'$ transforms functions, which are  exponentially growing in time, into time-periodic functions.\end{rem}
Applying \eqref{energy}, we deduce easily the following (see also Lemma 1 and Lemma 2 in \cite{Vai}).
\begin{prop}\label{ra} Let $B>k_1$ with $k_1$ the constant of \eqref{energy} when $\gamma=1$. Then, 
for $\im(\theta)>BT$, the operator
\[\begin{array}{rccl} F'\left[e^{-\delta\left\langle x\right\rangle}U(t,0)e^{-\delta\left\langle x\right\rangle}\right](t,\theta): & \mathcal{H}_{1}(\R^{n})& \to & \mathcal{H}_{1,per}(\R^{1+n}), \\
 \ \\ & f & \mapsto &  F'\left[e^{-\delta\left\langle x\right\rangle}U(t,0)e^{-\delta\left\langle x\right\rangle}\right](t,\theta)f\end{array}\]
 is bounded.\end{prop}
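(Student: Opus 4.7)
The plan is to reduce Proposition \ref{ra} to the two ingredients already at our disposal: the energy estimate \eqref{energy} and Vainberg's Lemma 2, recalled in \eqref{6}. Fix $f=(f_1,f_2)\in\mathcal H_1(\R^n)$ and set $\Phi(t,x)=(\Phi_1,\Phi_2)(t,x):=e^{-\delta\langle x\rangle}U(t,0)e^{-\delta\langle x\rangle}f$, extended by zero for $t<0$ via the convention \eqref{3}. The goal is to show $\Phi\in\mathcal H_1^A(\R^{1+n})$ for some $A$ with $k_1<A<\im(\theta)/T$ (possible since $\im(\theta)>BT$ and $B>k_1$), with norm controlled by $\|f\|_{\mathcal H_1(\R^n)}$; an application of \eqref{6} with $\gamma=1$ will then finish the proof.

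For this first step, I would combine the continuous inclusion $\mathcal H_1(\R^n)\hookrightarrow\dot{\mathcal H}_1(\R^n)$ (valid for $n\geq3$) with the continuity of multiplication by $e^{-\delta\langle x\rangle}$ from $\dot H^\gamma(\R^n)$ into $H^\gamma(\R^n)$ for $0\leq\gamma\leq1<n/2$, together with \eqref{energy} for $\gamma=1$, to obtain the pointwise-in-$t$ bound
\[\|\Phi(t,\cdot)\|_{\mathcal H_1(\R^n)}\leq C e^{k_1 t}\|f\|_{\mathcal H_1(\R^n)},\quad t\geq0,\]
with $\Phi\equiv0$ for $t<0$. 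Integrating the squared norm against $e^{-2At}$ controls $\|e^{-At}\Phi_1\|_{L^2(\R^{1+n})}$, $\|e^{-At}\nabla_x\Phi_1\|_{L^2(\R^{1+n})}$ and $\|e^{-At}\Phi_2\|_{L^2(\R^{1+n})}$. The remaining $\partial_t$ contribution to $\|e^{-At}\Phi_1\|_{H^1(\R^{1+n})}$ is handled by the identity $\partial_t\Phi_1=\Phi_2$, which comes from the very definition of the propagator $U(t,0)f=(u,u_t)$; this gives $\partial_t(e^{-At}\Phi_1)=e^{-At}(\Phi_2-A\Phi_1)$, also in $L^2(\R^{1+n})$. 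A standard mollification-and-truncation argument in $(t,x)$, using $\Phi\equiv0$ for $t<0$ and the exponential weight with $A>k_1$, places $\Phi$ inside the closure $\mathcal H_1^A(\R^{1+n})$ with $\|\Phi\|_{\mathcal H_1^A}\lesssim\|f\|_{\mathcal H_1}$.

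Once $\Phi\in\mathcal H_1^A(\R^{1+n})$ is established, \eqref{6} applied with $\gamma=1$ yields $F'_\theta\Phi\in\mathcal H_{1,per}(\R^{1+n})$ with norm bounded by $C_\theta\|\Phi\|_{\mathcal H_1^A}\leq C_\theta\|f\|_{\mathcal H_1}$, which is exactly the asserted boundedness. The only real obstacle is the verification of the time-derivative part of the $H^1(\R^{1+n})$ norm in the first step; the identity $\partial_t\Phi_1=\Phi_2$ built into the Cauchy problem circumvents the need to differentiate $U(t,0)$ directly and makes that step routine. I expect the proof to be short, essentially just the above observation plus an invocation of \eqref{6}.
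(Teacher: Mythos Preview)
Your strategy---show $\Phi:=e^{-\delta\langle x\rangle}U(\cdot,0)e^{-\delta\langle x\rangle}f\in\mathcal H_1^A(\R^{1+n})$ via \eqref{energy}, then invoke the boundedness of $F'_\theta$ from \eqref{6}---is exactly what the paper indicates; its entire ``proof'' is the sentence preceding the proposition, ``Applying \eqref{energy}, we deduce easily the following (see also Lemma~1 and Lemma~2 in \cite{Vai})'', so the approaches coincide.

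There is, however, a gap in the step you call routine. By definition $\mathcal H_1^A(\R^{1+n})$ is the closure of $\mathcal C_0^\infty\bigl((0,\infty)\times\R^n\bigr)^2$, so the first component of any element has vanishing trace at $t=0$. But $\Phi_1(0^+)=e^{-2\delta\langle x\rangle}f_1$, which is nonzero whenever $f_1\neq0$. The extension of $\Phi_1$ by zero for $t<0$ therefore has a jump across $t=0$, and its distributional time derivative on $\R^{1+n}$ picks up a term $\delta(t)\otimes e^{-2\delta\langle x\rangle}f_1\notin L^2$. Your identity $\partial_t\Phi_1=\Phi_2$ is valid only on $\{t>0\}$, and no mollification-and-truncation can approximate a function with such a jump by elements of $\mathcal C_0^\infty((0,\infty)\times\R^n)$ in the $H^1(\R^{1+n})$ norm; hence $\Phi\notin\mathcal H_1^A$ in general. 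The paper's own Remark~1 is sensitive to this: it only asserts $h(t)\Phi\in\mathcal H_1^A$ for a cutoff $h$ supported in $(0,\infty)$, and for the full proposition appeals to Vainberg's Lemma~1 in addition to Lemma~2. To repair the argument you should either split $\Phi=\alpha(t)\Phi+(1-\alpha(t))\Phi$ with $\alpha$ as in Lemma~\ref{loulou} (the piece $\alpha\Phi$ does lie in $\mathcal H_1^A$ by your computation, while $(1-\alpha)\Phi$ is supported in $[0,T)$ and its $F'$ is a single explicit term) or consult the device in \cite{Vai} that handles the initial trace.
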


Let us recall a result of \cite{Vai}.
\begin{prop}\emph{(Lemma 6,  \cite{Vai})}\label{ro}    For all $r>0$,
the operator
\begin{equation}\label{6}\begin{array}{rccl} W_0: & \mathcal{H}^{r}_{1}(\R^{1+n})& \to & \mathcal{H}^{r}_{1}(\R^{1+n}), \\
 \ \\ & \phi(t,.) & \mapsto &  \int_{-\infty}^tU_0(t-s)\phi(s)\d s. \end{array}\end{equation}
is bounded, and for $\im(\theta)> rT$, we have
\[F'\left[W_0\phi\right](t,\theta)=J(t,\theta)F'(\phi)(t,\theta),\]
where 
\begin{equation}\label{6}\begin{array}{rccl} J(t,\theta): & \mathcal{H}_{1,per}(\R^{1+n})& \to & \mathcal{H}_{1,per}(\R^{1+n}), \\
 \ \\ & \psi(t,.) & \mapsto &  \int_{0}^TF'\left[U_0\right](t-s,\theta)\psi(s)\d s\end{array}\end{equation}
 is a family of bounded operators defined for $\theta\in\C$, $\im(\theta)> rT$.\end{prop}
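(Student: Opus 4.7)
The plan is to split the proposition into three sub-claims: boundedness of $W_0$ on $\mathcal{H}^{r}_{1}(\R^{1+n})$, the intertwining identity $F'[W_0\phi] = J F'[\phi]$, and boundedness of $J(t,\theta)$ on $\mathcal{H}_{1,per}(\R^{1+n})$. Throughout I use that $W_0$ is a convolution in $t$ with the operator-valued kernel $K(\sigma) = \chi_{[0,\infty)}(\sigma) U_0(\sigma) \in \mathcal{L}(\mathcal{H}_1(\R^n))$.

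For the first sub-claim, I set $\tilde\phi(s) = e^{-rs}\phi(s)$ and observe $e^{-rt}(W_0\phi)(t) = (K_r * \tilde\phi)(t)$ with $K_r(\sigma) = \chi_{[0,\infty)}(\sigma)e^{-r\sigma}U_0(\sigma)$. Since $U_0$ is unitary on $\dot{\mathcal{H}}_1(\R^n)$ and the only inhomogeneous contribution comes from the $\Lambda^{-1}\sin(\sigma\Lambda)$ block, one has $\|U_0(\sigma)\|_{\mathcal{L}(\mathcal{H}_1(\R^n))} \leq C(1+|\sigma|)$, so $K_r \in L^1(\R,\mathcal{L}(\mathcal{H}_1(\R^n)))$ for every $r>0$. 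An operator-valued Young/Minkowski inequality then yields the spatial $\mathcal{H}_1$-bound on $\tilde v := e^{-rt}W_0\phi$ in $L^2_t$, which controls $\tilde v_1$ in $L^2_tH^1_x$ and $\tilde v_2$ in $L^2_{t,x}$. For the missing $\partial_t \tilde v_1 \in L^2_{t,x}$ piece of the $H^1(\R^{1+n})$ norm, I use the pointwise identity $\partial_t \tilde v_1 = -r\tilde v_1 + \tilde v_2$ coming from $v_2 = \partial_t v_1$, which shows the derivative is automatically in $L^2_{t,x}$. This establishes $\|W_0\phi\|_{\mathcal{H}_1^r} \lesssim \|\phi\|_{\mathcal{H}_1^r}$.

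For the intertwining identity, I compute $F'[W_0\phi](t,\theta)$ directly from its definition. The exponential decay coming from $e^{ik\theta}$ with $\im\theta > rT$, combined with the $L^1$-kernel bound of Step 1, legitimizes exchanging the sum over $k$ with the time integral. After the change of variable $s \mapsto s+kT$ in the inner integral, the bracketed sum becomes $e^{-i(t-s)\theta/T}F'[\phi](s,\theta)$ by definition, giving
\[
F'[W_0\phi](t,\theta) = \int_{-\infty}^t U_0(t-s)\, e^{i(t-s)\theta/T}\, F'[\phi](s,\theta)\,\d s.
\]
Substituting $v = t-s$, splitting $v = \sigma + jT$ with $\sigma \in [0,T]$ and $j \geq 0$, and using that $F'[\phi](\cdot,\theta)$ is $T$-periodic in its first argument, I factor the $j$-sum into the expression defining $F'[U_0](\sigma,\theta)$ and obtain $\int_0^T F'[U_0](\sigma,\theta) F'[\phi](t-\sigma,\theta)\,\d\sigma$. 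A final periodic change $\sigma = t-s$ puts the formula in the stated form.

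For the boundedness of $J(t,\theta)$ on $\mathcal{H}_{1,per}$, I combine the boundedness of $F'[U_0](\cdot,\theta):\mathcal{H}_1^r(\R^{1+n}) \to \mathcal{H}_{1,per}(\R^{1+n})$ recalled from Vainberg's Lemma 2 (quoted in the remark after \eqref{6}) with Step 1, and observe that integration against $F'[\phi](s,\cdot) \in \mathcal{H}_{1,per}$ over the compact slice $s\in[0,T]$ is a bounded operation since $F'[U_0](t-s,\theta)$ depends continuously on $s$ in the operator norm. The main obstacle is the second step: justifying the formal interchange of summation and integration uniformly in $t$, and verifying convergence of the operator-valued periodic series, which is precisely where the condition $\im(\theta) > rT$ is essential — it must dominate the polynomial growth of $\|U_0(\sigma+jT)\|_{\mathcal{L}(\mathcal{H}_1)}$ to give absolute convergence of $\sum_{j\geq 0} e^{ij\theta}U_0(\sigma+jT)$ in operator norm.
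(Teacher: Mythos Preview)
The paper does not actually prove this proposition: it cites Lemma~6 of \cite{Vai} and remarks that Vainberg in fact establishes the identity for the modified operator $W_0^{\alpha_1}$ (the kernel $U_0$ multiplied by a smooth cutoff $\alpha_1$ vanishing near $t=0$), after which the compactly-in-$t$ supported remainder $W_0^{1-\alpha_1}$ is handled by the same method. Your proposal therefore goes well beyond the paper, and Steps~1 and~2 are essentially sound. One small slip in Step~1: differentiating $v_1(t)=\bigl[\int_{-\infty}^t U_0(t-s)\phi(s)\,\d s\bigr]_1$ produces a boundary term, so the correct relation is $\partial_t v_1 = \phi_1 + v_2$, not $v_2=\partial_t v_1$. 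Since $\tilde\phi_1\in L^2(\R^{1+n})$ by hypothesis this does not affect the conclusion.

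Step~3, however, has a genuine gap. Your claim that $F'[U_0](t-s,\theta)$ depends continuously on $s$ in operator norm is false: with the paper's convention $U_0(t)=0$ for $t<0$, the periodic function $\sigma\mapsto F'[U_0](\sigma,\theta)$ jumps by $U_0(0)=\mathrm{Id}$ at every $\sigma\in T\mathbb Z$ (compare $\sum_{j\ge 0}e^{ij\theta}U_0(jT)$ at $\sigma=0^+$ with the $T$-periodic limit $\sum_{j\ge 1}e^{ij\theta}U_0(jT)$ from the left). This discontinuity is precisely the reason Vainberg inserts the cutoff $\alpha_1$, and why the paper singles out that point. Your reference to ``the boundedness of $F'[U_0](\cdot,\theta):\mathcal H_1^r\to\mathcal H_{1,per}$'' is also confused: $U_0$ is an operator-valued kernel, not an element of $\mathcal H_1^r$, so Lemma~2 of \cite{Vai} does not apply to it directly.

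The boundedness of $J(t,\theta)$ is nonetheless true and can be recovered from your own Steps~1--2: choose $\beta\in C_0^\infty((0,2T))$ with $\beta(\cdot)+\beta(\cdot+T)\equiv 1$ on $[0,T]$, set $\phi(t)=\beta(t)e^{-it\theta/T}\psi(t)$, and check that $F'[\phi](\cdot,\theta)=\psi$ with $\|\phi\|_{\mathcal H_1^r}\lesssim\|\psi\|_{\mathcal H_{1,per}}$; then $J\psi=F'[W_0\phi]$ and Step~1 together with the boundedness of $F'$ give the result. Alternatively one can argue directly that the jump sits only in the diagonal blocks of $F'[U_0]$, and periodic convolution of a bounded kernel against $\psi_1\in H^1_{per}$ still lands in $H^1_{per}$ (put $\partial_t$ on $\psi_1$), while the $(1,2)$ block $F'[\Lambda^{-1}\sin(\cdot\Lambda)]$ has no jump since it vanishes at $\sigma=0$.
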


In fact in  \cite{Vai}, Lemma 6, Vainberg shows this result for 
 $F'\left[W_0^{\alpha_1}\right](t,\theta)$ where $W_0^{\alpha_1}$ is defined by
  \[W_0^{\alpha_1}(\phi)(t)=\int_{-\infty}^t\alpha_1(t-s)U_0(t-s)\phi(s)\d s,\quad \phi\in\mathcal C^\infty_0(\R^{1+n})\]
 with $\alpha_1\in\mathcal C^\infty(\R)$ is such that $\alpha_1(t)=0$ for $t\leq t_0$ and $\alpha_1(t)=1$ for $t\geq t_0+1$. Following the same arguments, we can easily obtain the same result for $W_0^{(1-\alpha_1)}$ and deduce \eqref{6}.

From the Duhamel's principle, we deduce the representation
 \[U(t,0)=U_0(t)-\int_0^tU_0(t-s)Q(s)U(s,0)\d s.\]
 and since $U(t,0)=0$ for $t<0$ we obtain
 \begin{equation}\label{60}U(t,0)=U_0(t)-W_0\left[Q(t)U(t,0)\right](t).\end{equation}
 \begin{lem}\label{loulou} Let $B>k_1$ with $k_1$ the constant of \eqref{energy} when $\gamma=1$. Then, for $\im(\theta)>BT$, we have
 \begin{equation}\label{1222} F'\left[W_0\left[Q(t)U(t,0)e^{-\delta\left\langle x\right\rangle}\right](t)\right](t,\theta)=J(t,\theta)F'\left[Q(t)U(t,0)e^{-\delta\left\langle x\right\rangle}\right](t,\theta).\end{equation}\end{lem}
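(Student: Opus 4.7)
The plan is to apply Proposition \ref{ro} directly to the family $\phi(t,x) := Q(t)U(t,0)e^{-\delta\left\langle x\right\rangle}f$ parametrized by $f \in \mathcal{H}_1(\R^n)$, and then read off the operator identity \eqref{1222} by linearity in $f$. The only thing to verify is that $\phi$ belongs to $\mathcal{H}_1^r(\R^{1+n})$ for some $r$ satisfying $k_1 < r < B$; once that is done, the hypothesis $\im(\theta) > BT$ trivially implies $\im(\theta) > rT$, so Proposition \ref{ro} gives precisely the claimed identity when applied to $f$.

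Unpacking $\phi$ via the block form of $Q$, one has $\phi = (0,\, V(t,x) u(t,\cdot))$, where $u(t,\cdot)$ denotes the first component of $U(t,0)e^{-\delta\left\langle x\right\rangle}f$. The convention \eqref{3} gives $\phi(t,\cdot) = 0$ for $t < 0$, matching the support requirement built into the definition of $\mathcal{H}_1^r$. The first slot being identically zero trivially lies in the weighted $H^1(\R^{1+n})$. For the second slot, the argument combines two ingredients: the energy estimate \eqref{energy} with $\gamma=1$, yielding $\Vert u(t,\cdot)\Vert_{\dot{H}^1(\R^n)} \lesssim e^{k_1 t}\Vert f\Vert_{\mathcal{H}_1}$ for $t \geq 0$, and hypothesis (H1), which forces $V(t,\cdot)$ to decay like $e^{-(2D+3\epsilon)\left\langle x\right\rangle}$. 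Since multiplication by such a rapidly decaying $V$ sends $\dot{H}^1(\R^n)$ continuously into $L^2(\R^n)$ when $n \geq 3$, one obtains $\Vert V(t,\cdot) u(t,\cdot)\Vert_{L^2(\R^n)} \lesssim e^{k_1 t}\Vert f\Vert_{\mathcal{H}_1}$, and integrating against $e^{-2rt}\,\mathrm{d}t$ over $t > 0$ converges whenever $r > k_1$. Thus $\phi \in \mathcal{H}_1^r(\R^{1+n})$, and Proposition \ref{ro} yields $F'[W_0 \phi](t,\theta) = J(t,\theta) F'(\phi)(t,\theta)$ for $\im(\theta) > BT$, which is exactly \eqref{1222}.

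The only mildly delicate point is the verification that multiplication by $V(t,\cdot)$ maps $\dot{H}^1(\R^n)$ continuously into $L^2(\R^n)$, uniformly in $t$. Because $u$ lies only in $\dot{H}^1$, one cannot directly bound $\Vert Vu\Vert_{L^2}$ by $\Vert V\Vert_{L^\infty}\Vert u\Vert_{L^2}$. The right move is to use the Sobolev embedding $\dot{H}^1(\R^n) \hookrightarrow L^{2n/(n-2)}(\R^n)$ (valid precisely because $n \geq 3$) together with a H\"older estimate $\Vert Vu\Vert_{L^2} \lesssim \Vert V\Vert_{L^n}\Vert u\Vert_{L^{2n/(n-2)}}$, whose right-hand side is uniformly controlled in $t$ by (H1). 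Everything else is routine bookkeeping, and the lemma follows.
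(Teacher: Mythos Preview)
Your proof is correct and follows the same core idea as the paper: verify that $\phi(t)=Q(t)U(t,0)e^{-\delta\langle x\rangle}f$ lies in $\mathcal H_1^r(\R^{1+n})$ for some $r<B$, then invoke Proposition~\ref{ro}. Your route is in fact slightly more direct than the paper's. The paper introduces a temporal cutoff $\alpha\in\mathcal C^\infty(\R)$ with $\alpha=0$ near $t=0$, splits $\phi=\alpha\phi+(1-\alpha)\phi$, applies Proposition~\ref{ro} to $\alpha\phi$, and treats the compactly-supported-in-$t$ piece $(1-\alpha)\phi$ by hand. You bypass this decomposition by observing that the first component of $\phi$ is identically zero, so the only membership to check is that the second component $V(t,\cdot)u(t,\cdot)\mathds{1}_{t>0}$ lies in the weighted $L^2(\R^{1+n})$; since $L^2$ places no regularity constraint at the jump $t=0$, the cutoff is unnecessary. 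Your use of the Sobolev embedding $\dot H^1\hookrightarrow L^{2n/(n-2)}$ together with $V\in L^n_x$ to pass from $\dot H^1$ to $L^2$ is a clean way to handle the homogeneous space, and is consistent with the paper's standing assumption $n\geq 3$.
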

 \begin{proof}
  Choose $\alpha\in\mathcal C^\infty(\R)$  such that $0\leq \alpha(t)\leq 1$, $\alpha(t)=0$ for $t\leq \frac{T}{5}$ and $\alpha(t)=1$ for $t\geq \frac{T}{4}$.
  Then, assumption (H1) and \eqref{energy} imply
  \[\alpha(t)Q(t)U(t,0)e^{-\delta\left\langle x\right\rangle}\in\mathcal L\left(\Bb,\mathcal{H}^{B}_{1}(\R^{1+n})\right)\]
  and Proposition \ref{ro} yields
  \begin{equation}\label{1223}F'\left[W_0\left[\alpha(t)Q(t)U(t,0)e^{-\delta\left\langle x\right\rangle}\right](t)\right](t,\theta)=J(t,\theta)F'\left[\alpha(t)Q(t)U(t,0)e^{-\delta\left\langle x\right\rangle}\right](t,\theta).\end{equation}
  Moreover, since $(1-\alpha(t))Q(t)U(t,0)e^{-\delta\left\langle x\right\rangle}=0$ for $t\notin[0,T[$, we have
  \[F'\left[(1-\alpha(t))Q(t)U(t,0)e^{-\delta\left\langle x\right\rangle}\right](t,\theta)=e^{\frac{it\theta}{T}}(1-\alpha(t))Q(t)U(t,0)e^{-\delta\left\langle x\right\rangle},\quad t\in[0,T[\]
  and, since $U_0(t)=0$ for $t<0$, we get 
  \[\begin{aligned}W_0\left[(1-\alpha(t))Q(t)U(t,0)e^{-\delta\left\langle x\right\rangle}\right](t)&=\int_{\R}U_0(t-s)(1-\alpha(s))Q(s)U(s,0)e^{-\delta\left\langle x\right\rangle}\d s,\\ &=\int_0^TU_0(t-s)(1-\alpha(s))Q(s)U(s,0)e^{-\delta\left\langle x\right\rangle}\d s.\end{aligned}\]
  Applying $F'$ to this representation , for $\im(\theta)>BT$, we obtain
 \[\begin{aligned}&F'\left[\int_0^TU_0(t-s)(1-\alpha(s))Q(s)U(s,0)e^{-\delta\left\langle x\right\rangle}\d s\right](t,\theta)\\ &=\int_0^TF'\left[U_0\right](t-s,\theta)e^{\frac{is\theta}{T}}(1-\alpha(s))Q(s)U(s,0)e^{-\delta\left\langle x\right\rangle}\d s,\\
 &=J(t,\theta)F'\left[(1-\alpha(t))Q(t)U(t,0)e^{-\delta\left\langle x\right\rangle}\right](t,\theta).\end{aligned}.\]
  Combining this equality with \eqref{1223}, we obtain \eqref{1222}.\end{proof}
  
Following \eqref{1222}, applying $F'$ to both sides of \eqref{60} (see also \cite{Ki3} and \cite{Vai}  for the properties of the transformation $F'$) and multiplying by $e^{-\delta\left\langle x\right\rangle}$ on the right and on the left of \eqref{60}  for $\im(\theta)\geq BT$ with $B>0$ sufficiently large, we obtain
 \begin{equation}\label{5}\begin{aligned}F'\left[e^{-\delta\left\langle x\right\rangle}U(t,0)e^{-\delta\left\langle x\right\rangle}\right](t,\theta)=&F'\left[e^{-\delta\left\langle x\right\rangle}U_0(t)e^{-\delta\left\langle x\right\rangle}\right](t,\theta)\\ \ &-e^{-\delta\left\langle x\right\rangle}J(t,\theta)F'\left[Q(t)U(t,0)e^{-\delta\left\langle x\right\rangle}\right](t,\theta).\end{aligned}\end{equation}
  Notice that, for $\im(\theta)>BT$, we have 
 \[F'\left[Q(t)U(t,0)e^{-\delta\left\langle x\right\rangle}\right](t,\theta)=e^{\frac{it\theta}{T}}\left(\sum_{k=-\infty}^{+\infty}Q(t+kT)U(t+kT,0)e^{-\delta\left\langle x\right\rangle}e^{ik\theta}\right).\]
Since $Q(t)$ is $T$-periodic and, from (H1), $Q(t)e^{\delta\left\langle x\right\rangle}\in\mathcal L(\Bb)$, we obtain
 \[\begin{aligned}F'\left[Q(t)U(t,0)e^{-\delta\left\langle x\right\rangle}\right](t,\theta)&=Q(t)e^{\delta\left\langle x\right\rangle}e^{\frac{it\theta}{T}}\left(\sum_{k=-\infty}^{+\infty}e^{-\delta\left\langle x\right\rangle}U(t+kT,0)e^{-\delta\left\langle x\right\rangle}e^{ik\theta}\right)\\ \ &=Q(t)e^{\delta\left\langle x\right\rangle}F'\left[e^{-\delta\left\langle x\right\rangle}U(t,0)e^{-\delta\left\langle x\right\rangle}\right](t,\theta).\end{aligned}\]
 Applying this formula to \eqref{5}, we get
 \[\begin{aligned}F'\left[e^{-\delta\left\langle x\right\rangle}U(t,0)e^{-\delta\left\langle x\right\rangle}\right](t,\theta)=&F'\left[e^{-\delta\left\langle x\right\rangle}U_0(t)e^{-\delta\left\langle x\right\rangle}\right](t,\theta)\\ \ &-e^{-\delta\left\langle x\right\rangle}J(t,\theta)Q(t)e^{\delta\left\langle x\right\rangle}F'\left[e^{-\delta\left\langle x\right\rangle}U(t,0)e^{-\delta\left\langle x\right\rangle}\right](t,\theta)\end{aligned} \]
 and it follows
 \[\left(\textrm{Id}+e^{-\delta\left\langle x\right\rangle}J(t,\theta)Q(t)e^{\delta\left\langle x\right\rangle}\right)F'\left[e^{-\delta\left\langle x\right\rangle}U(t,0)e^{-\delta\left\langle x\right\rangle}\right](t,\theta)=F'\left[e^{-\delta\left\langle x\right\rangle}U_0(t)e^{-\delta\left\langle x\right\rangle}\right](t,\theta).\]
Thus, to prove the meromorphic continuation of $R(\theta)$, we only need to show that the families

\[\left(\textrm{Id}+e^{-\delta\left\langle x\right\rangle}J(t,\theta)Q(t)e^{\delta\left\langle x\right\rangle}\right)^{-1},\quad F'\left[e^{-\delta\left\langle x\right\rangle}U_0(t)e^{-\delta\left\langle x\right\rangle}\right](t,\theta)\]
admit a meromorphic continuation with respect to $\theta$.  
\begin{prop}\label{p1}
Let $G$ be the operator defined by
\[\begin{array}{rccl} G: & \mathcal{H}_{1,per}(\R^{1+n})& \to & \mathcal{H}_{1,per}(\R^{1+n}), \\
 \ \\ & \phi(t,.) & \mapsto &  e^{\delta\left\langle x\right\rangle}Q(t)e^{\delta\left\langle x\right\rangle}\phi(t,.).\end{array}\]
Then $G$ is a compact operator.\end{prop}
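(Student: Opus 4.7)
The plan is to identify $G$ explicitly and then exhibit it as a norm-limit of compact multiplication operators whose compactness comes from Rellich--Kondrachov. Writing out the action of $Q(t)$ one finds, for every $(\phi_1,\phi_2)\in\mathcal{H}_{1,per}(\R^{1+n})$,
\[G(\phi_1,\phi_2) = \bigl(0,\,W(t,x)\phi_1\bigr),\qquad W(t,x):=e^{2\delta\langle x\rangle}V(t,x).\]
Hypothesis $\rm(H1)$ combined with the choice $\delta<D$ ensures $W\in\mathcal{C}^\infty(\R^{1+n})$, $T$-periodic in $t$, with $|\partial_t^\beta\partial_x^\alpha W(t,x)|\leq C_{\alpha,\beta}\,e^{-3\epsilon\langle x\rangle}$ for all $\alpha,\beta$. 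Since the first component of $G$ is trivially zero, proving compactness of $G$ reduces to proving that the multiplication operator
\[M:H^1_{per}(\R^{1+n})\to L^2_{per}(\R^{1+n}),\qquad \phi_1\mapsto W\phi_1\]
is compact.

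First I would truncate $W$ spatially. Pick $\chi\in\mathcal{C}^\infty_0(\R^n)$ with $\chi\equiv 1$ on $|x|\leq 1$ and set $\chi_R(x):=\chi(x/R)$, $W_R:=\chi_R W$. The exponential decay of $W$ gives $\|W-W_R\|_{L^\infty(\R^{1+n})}\leq Ce^{-3\epsilon R}\to 0$ as $R\to+\infty$. If $M_R$ denotes multiplication by $W_R$, then
\[\|M-M_R\|_{\mathcal L(H^1_{per},L^2_{per})}\leq \|W-W_R\|_{L^\infty}\,\|\phi_1\|_{H^1_{per}\to L^2_{per}}\xrightarrow[R\to+\infty]{}0,\]
so it suffices to show that each $M_R$ is compact.

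The compactness of $M_R$ is the core step. Identify $H^1_{per}(\R^{1+n})$ and $L^2_{per}(\R^{1+n})$, via restriction to one period, with $H^1(\mathbb T_T\times \R^n)$ and $L^2(\mathbb T_T\times\R^n)$, where $\mathbb T_T=\R/T\mathbb Z$. Since $W_R$ is supported in $\mathbb T_T\times \overline{B_{2R}}$ and is bounded together with all derivatives, $M_R$ factors as
\[H^1(\mathbb T_T\times\R^n)\xrightarrow{\text{restr.}}H^1(\mathbb T_T\times B_{2R})\xhookrightarrow{\ \text{cpt}\ }L^2(\mathbb T_T\times B_{2R})\xrightarrow{\ \cdot\, W_R\ }L^2(\mathbb T_T\times \R^n),\]
the middle arrow being the Rellich--Kondrachov compact embedding on the bounded Lipschitz domain $\mathbb T_T\times B_{2R}$; the outer arrows are bounded. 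Hence each $M_R$ is compact.

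Combining the two steps, $M$ is a norm-limit of compact operators, hence compact, and therefore so is $G$. The only technical point to be careful about is the identification of the periodic Sobolev spaces with Sobolev spaces on the compact cylinder $\mathbb T_T\times B_{2R}$, which is standard; the crucial arithmetic input is that $\delta<D$ so that $W=e^{2\delta\langle x\rangle}V$ inherits from $\rm(H1)$ an exponential spatial decay strong enough to make the cutoff approximation converge in operator norm.
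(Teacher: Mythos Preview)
Your proof is correct and follows essentially the same route as the paper: both compute $G(\phi_1,\phi_2)=(0,e^{2\delta\langle x\rangle}V\phi_1)$ and then reduce to Rellich--Kondrachov using the exponential spatial decay left over from $\rm(H1)$ after the weight $e^{2\delta\langle x\rangle}$ is absorbed. The only packaging difference is that the paper observes $e^{2\delta\langle x\rangle}V\phi_1\in e^{-\epsilon\langle x\rangle}H^1_{per}(\R^{1+n})$ and invokes the weighted Rellich--Kondrachov theorem on unbounded domains directly, whereas you spell out that weighted compactness via spatial truncation and the classical Rellich--Kondrachov embedding on the bounded cylinder $\mathbb{T}_T\times B_{2R}$.
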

\begin{proof}
Let $h=(h_1,h_2)\in \mathcal{H}_{1,per}(\R^{1+n})$. We have
\[e^{\delta\left\langle x\right\rangle}Q(t)e^{\delta\left\langle x\right\rangle}h=(0,e^{2\delta\left\langle x\right\rangle}Vh_1).\]
Applying (H1), we show that $e^{2\delta\left\langle x\right\rangle}Vh_1$ takes value in
$e^{-\epsilon\left\langle x\right\rangle}H^1_{per}(\R^{1+n})$ and the Rellich-Kondrachov theorem for unbounded domains implies that $G$ is a compact operator on $\mathcal{H}_{1,per}(\R^{1+n})$.\end{proof}

Let us remark  that for $\im(\theta)\geq BT$, we have
\[e^{-\delta\left\langle x\right\rangle}J(t,\theta)Q(t)e^{\delta\left\langle x\right\rangle}=e^{-\delta\left\langle x\right\rangle}J(t,\theta)e^{-\delta\left\langle x\right\rangle}e^{\delta\left\langle x\right\rangle}Q(t)e^{\delta\left\langle x\right\rangle}\]
with
\[e^{-\delta\left\langle x\right\rangle}J(t,\theta)e^{-\delta\left\langle x\right\rangle}\phi=\int_0^{T}F'\left[e^{-\delta\left\langle x\right\rangle}U_0(t)e^{-\delta\left\langle x\right\rangle}\right](t-s,\theta)\phi(s)\d s.\]
Combining this identity, Proposition \ref{p1} and representation \eqref{6} with the Fredholm theorem  and Theorem 8 in \cite{Vai}, we conclude that to prove Theorem \ref{t3} it remains  to show  that  $F'\left[e^{-\delta\left\langle x\right\rangle}U_0(t)e^{-\delta\left\langle x\right\rangle}\right](t,\theta)$ admits an analytic continuation in $\theta$ continuous  with respect to $t$, and that  there exists $\theta_0\in\C$ with $\im(\theta_0)>0$ such that the operator
\[\textrm{Id}+e^{-\delta\left\langle x\right\rangle}J(t,\theta_0)Q(t)e^{\delta\left\langle x\right\rangle}\]
is invertible.
\section{Analytic continuation of $F'\left[e^{-\delta\left\langle x\right\rangle}U_0(t)e^{-\delta\left\langle x\right\rangle}\right](t,\theta)$ }
Since $U_0(t)$ is unitary in $\B$ and the multiplication by $e^{-\delta\left\langle x\right\rangle}$ is a continuous operator from $\B$ to $\Bb$, we can easily prove (see Lemma 2 in \cite{Vai}) that 
\[F'\left[e^{-\delta\left\langle x\right\rangle}U_0(t)e^{-\delta\left\langle x\right\rangle}\right](t,\theta):\Bb\to\Bb\]
is well defined and analytic with respect to $\theta$ in $\{\theta\in\C\  :\ \im(\theta)>0\}$. The goal of   this section  is to establish the following.
\begin{Thm}\label{t4} The family of operators
 \[F'\left[e^{-\delta\left\langle x\right\rangle}U_0(t)e^{-\delta\left\langle x\right\rangle}\right](t,\theta):\Bb\rightarrow\Bb\]
  admits an analytic continuation with respect to $\theta$, which is continuous with respect to $t\in\R$,  from 
 $\{\theta\in\C\ :\ \im(\theta)>0\}$ to $\{ \theta\in\mathbb C\ :\ \im(\theta)>-\frac{\delta T}{4}\}$ for $n$ odd and to
  $\{ \theta\in\mathbb C\ :\ \im(\theta)>-\frac{\delta T}{4},\ \theta\notin2\pi\mathbb Z+i\R^-\}$
   for $n$ even. Moreover, for $n$ even, there exists $\epsilon_0>0$ such that for $\theta\in\{ \theta\in\mathbb C\ :\ \abs{\theta}\leq\epsilon_0,\ \theta\notin2\pi\mathbb Z+i\R^-\}$ we have the representation
 \[F'\left[e^{-\delta\left\langle x\right\rangle}U_0(t)e^{-\delta\left\langle x\right\rangle}\right](t,\theta)=B(\theta)\theta^{n-2}\log(\theta)+C(t,\theta),\]
 where $\log$ is the logarithm defined on $\C\setminus i\R^-$,  $C(t,\theta)$ is $\mathcal C^\infty$ and $T$-periodic with respect to $t$ and analytic with respect to $\theta$, $B(\theta)$ is analytic with respect to $\theta$ for  $\abs{\theta}\leq \epsilon_0$, and  for all  $j\in\mathbb N$,  $\left(\partial_\theta^jB(\theta)\right)_{|\theta=0}$ are finite rank operators.\end{Thm}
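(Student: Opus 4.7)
\emph{Setup.} The plan is to exploit the spectral theorem for $-\Delta_x$ together with a Fourier series in $t$, reducing the analytic continuation to that of the classical weighted free resolvent $e^{-\delta\langle x\rangle}(-\Delta_x-\lambda^2)^{-1}e^{-\delta\langle x\rangle}$. Set $W(t,\theta):=F'\bigl[e^{-\delta\langle x\rangle}U_0(t)e^{-\delta\langle x\rangle}\bigr](t,\theta)$. A direct computation from the definition of $F'$ shows that $W(\cdot,\theta)$ is $T$-periodic in $t$, so one may expand $W(t,\theta)=\sum_{n\in\mathbb Z}e^{2\pi int/T}W_n(\theta)$. Writing $W(t)=e^{it\theta/T}\cdot e^{-\delta\langle x\rangle}\chi(t)$, where $\chi(t)=\sum_{k\geq 0}U_0(t+kT)e^{-\delta\langle x\rangle}f\,e^{ik\theta}$ satisfies the Floquet condition $\chi(t+T)=e^{-i\theta}\chi(t)$ and, for $t\notin T\mathbb Z$, the free wave equation with delta-type sources at $t\in T\mathbb{Z}$ coming from the initial data in $U_0$, and substituting this Fourier ansatz into the wave equation, each mode satisfies an elliptic equation $(-\Delta_x-\lambda_n^2)\chi_n=g_n$ with $\lambda_n(\theta)=(\theta-2\pi n)/T$ (the sign chosen so that $\im\lambda_n>0$ when $\im\theta>0$) and $g_n$ a bounded, $\theta$-analytic operator on the data. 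Consequently $W_n(\theta)=\tfrac{1}{T}\,e^{-\delta\langle x\rangle}R_0(\lambda_n^2)e^{-\delta\langle x\rangle}\cdot(i\lambda_n,\,1)f$, where $R_0(z)=(-\Delta_x-z)^{-1}$.

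\emph{Use of the classical weighted free resolvent.} The cutoff resolvent $e^{-\delta\langle x\rangle}R_0(\lambda^2)e^{-\delta\langle x\rangle}$, initially defined for $\im\lambda>0$, admits, via the explicit Hankel-function kernel of $R_0$, an analytic continuation (entire in $\lambda$ for $n$ odd) or meromorphic continuation (holomorphic on the logarithmic cover $\mathbb C\setminus i\R^-$, with a $\lambda^{n-2}\log\lambda$ branch at $\lambda=0$, for $n$ even). Balancing the kernel growth $e^{|\im\lambda||x-y|}\le e^{|\im\lambda|(\langle x\rangle+\langle y\rangle)}$ against the weights, split symmetrically to also accommodate the $H^1$-gain needed for $W_n\in\Bb$, yields operator-norm bounds for the continued resolvent that are locally bounded on $\{\im\lambda>-\delta/4\}$ with high-frequency decay of order $|\lambda|^{-1}$. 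Plugging $\lambda=\lambda_n$, each $W_n(\theta)$ is analytic in $\theta\in\{\im\theta>-\delta T/4\}$, with singularities at $\lambda_n=0$ (i.e.\ $\theta=2\pi n$) producing the branch locus $2\pi\mathbb Z+i\R^-$ excluded in the statement for $n$ even. The series $\sum_n e^{2\pi int/T}W_n(\theta)$ then converges absolutely in $\mathcal L(\Bb)$, uniformly in $t\in\R$ and locally uniformly in $\theta$, by the $|\lambda_n|^{-1}$ decay of the resolvent (together with the $2\pi n/T$ cancellation coming from $i\lambda_n$ in $g_n$), providing the analytic/meromorphic continuation of $W(t,\theta)$ claimed in Theorem~\ref{t4}.

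\emph{Low-energy expansion and main obstacle.} For $n$ even and $\theta$ near $0$, only the mode $n=0$, with $\lambda_0=\theta/T\to 0$, contributes a singular term; the remaining modes combine into a function analytic in $\theta$ and $\mathcal C^\infty$ and $T$-periodic in $t$. Using the classical low-energy expansion $R_0(\lambda^2)=P(\lambda)+\lambda^{n-2}\log(\lambda)Q(\lambda)$, with $P,Q$ entire operator-valued and whose Taylor coefficients, once conjugated by $e^{-\delta\langle x\rangle}$, are finite-rank (their Schwartz kernels being polynomial in $x,y$ at each finite order), one extracts from the $n=0$ mode the representation $W(t,\theta)=B(\theta)\theta^{n-2}\log\theta+C(t,\theta)$ with the stated analyticity in $\theta$, continuity and periodicity in $t$, and finite-rank property of $\partial_\theta^jB(0)$. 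The principal obstacle lies in the second step: one must establish sharp quantitative bounds on the continued weighted resolvent down to $\im\lambda>-\delta/4$, with enough high-frequency decay to guarantee absolute convergence of the Fourier series uniformly in $\theta$, and, in the even case, carefully track the logarithmic branch of $R_0$ at $\lambda=0$ to confirm that only the $n=0$ mode contributes singular behavior and to verify the finite-rank structure of the log-coefficients.
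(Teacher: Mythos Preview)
Your strategy---Fourier series in $t$, reducing each mode to the weighted free resolvent $e^{-\delta\langle x\rangle}(-\Delta-\lambda_n^2)^{-1}e^{-\delta\langle x\rangle}$ with $\lambda_n=(\theta-2\pi n)/T$---is genuinely different from the paper's and conceptually attractive. The paper never expands in $t$: it decomposes dyadically in $x$ via cutoffs $\phi_m$, uses finite propagation speed so that for each pair $(m,l)$ the $k$-sum in $F'[\phi_mU_0\phi_l]$ is finite (odd $n$, by Huygens) or can be handled through the explicit Herglotz--Petrovskii kernel of $\sin(t\Lambda)/\Lambda$ (even $n$), and then sums over $m,l$ using the exponential weights. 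Your identification $W_n(\theta)=-\tfrac1T\,e^{-\delta\langle x\rangle}(A_0+i\lambda_n)^{-1}e^{-\delta\langle x\rangle}$ is correct, and the low-energy expansion of the $n=0$ mode is indeed the right source of the $\theta^{n-2}\log\theta$ term in even dimension.

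The gap is the convergence of the Fourier series, which you flag as the principal obstacle but do not resolve. The asserted $|\lambda_n|^{-1}$ decay of $W_n$ in $\mathcal L(\Bb)$ fails. In the $2\times2$ block structure of $(A_0+i\lambda)^{-1}$, the $(1,2)$ entry is $e^{-\delta\langle x\rangle}R_0(\lambda^2)e^{-\delta\langle x\rangle}:L^2\to H^1$, and this has norm only $O(1)$: differentiating the resolvent kernel (e.g.\ $e^{i\lambda|x-y|}/4\pi|x-y|$ in $n=3$) in $x$ brings down a factor $i\lambda$ that exactly cancels the $|\lambda|^{-1}$ gain from the $L^2\to L^2$ bound. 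The diagonal blocks $i\lambda\,e^{-\delta\langle x\rangle}R_0e^{-\delta\langle x\rangle}$ are likewise $O(1)$ for the same reason. Thus $\|W_n(\theta)\|_{\mathcal L(\Bb)}\not\to0$, and $\sum_n e^{2\pi i nt/T}W_n(\theta)$ cannot converge in operator norm---absolutely or otherwise. This is intrinsic rather than an artifact of crude estimates: the map $t\mapsto e^{-\delta\langle x\rangle}U_0(t)e^{-\delta\langle x\rangle}$ is only \emph{strongly} continuous in $\mathcal L(\Bb)$ (the exponential weight gives spatial decay but no smoothing in the wave frequency), so no operator-norm Riemann--Lebesgue decay is available; the ``$2\pi n/T$ cancellation'' you invoke does not alter this. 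A repair would require working on a dense subspace such as $\mathrm{dom}(A_0^N)$, where the series does converge, together with $\theta$-uniform bounds on the partial sums in order to extend by density---but producing those uniform bounds is essentially the content of the theorem, which is why the paper proceeds instead through the physical-space decomposition.
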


 Let $\left(\phi_m\right)_{m\in\mathbb N}\subset\mathcal C^\infty(\R^n)$ be a partition of unity such that 
 \[\sum_{m=0}^{+\infty}\phi_m(x)=1,\]
 \[\norm{\partial_x^{\alpha}\phi_m}_{L^\infty(\R^n)}\leq C_\alpha,\quad 0\leq\phi_m\leq1,\quad  m\in\mathbb N,\quad\alpha\in\mathbb N^n,\]
 \[\textrm{supp}\phi_m\subset\{x\in\R^n\ :\ 2^{m-1}T\leq\abs{x}\leq2^{m+1}T\},\quad m\geq1,\]
 \[\textrm{supp}\phi_0\subset\{x\in\R^n\ :\ \abs{x}\leq2T\}.\]
 The main point of Theorem \ref{t4} is to consider, for $\im(\theta)>0$, the following representation
 \begin{equation}\label{7}F'\left[e^{-\delta\left\langle x\right\rangle}U_0(t)e^{-\delta\left\langle x\right\rangle}\right](t,\theta)=\sum_{m,l=0}^{+\infty}e^{-\delta\left\langle x\right\rangle}F'\left[\phi_mU_0(t)\phi_l\right](t,\theta)e^{-\delta\left\langle x\right\rangle}.\end{equation}
Using this representation we will obtain  Theorem \ref{t4}, by proving the analytic continuation of \[e^{-\delta\left\langle x\right\rangle}F'\left[\phi_mU_0(t)\phi_l\right](t,\theta)e^{-\delta\left\langle x\right\rangle}: \Bb\to\Bb\] and  showing that for all $\theta\in\{ \theta\in\mathbb C\ :\ \im(\theta)>-\frac{\delta T}{4}\}$ for $n$ odd (respectively\\ $\theta\in\{ \theta\in\mathbb C\ :\ \im(\theta)>-\frac{\delta T}{4},\ \theta\notin2\pi\mathbb Z+i\R^-\}$ for $n$ even)
\[\sum_{m,l\in\mathbb N}e^{-\delta\left\langle x\right\rangle}F'\left[\phi_mU_0(t)\phi_l\right](t,\theta)e^{-\delta\left\langle x\right\rangle}\]
converge to $\mathcal G(t,\theta)$ which satisfies the properties described in Theorem \ref{t4}.  We treat separately, the case of odd and even dimensions.

\subsection{Analytic continuation of $F'\left[e^{-\delta\left\langle x\right\rangle}U_0(t)e^{-\delta\left\langle x\right\rangle}\right](t,\theta)$ for $n$ odd}
In this subsection we treat the case of  odd dimensions. For $n$ odd, the Huygens principle implies that, for all $m,l\in\mathbb N$, the infinite sum  \[\sum_{k=-\infty}^{+\infty}e^{-\delta\left\langle x\right\rangle}\phi_mU_0(t+kT)e^{ik\theta}\phi_le^{-\delta\left\langle x\right\rangle}\] 
is in fact a finite sum. Thus, $e^{-\delta\left\langle x\right\rangle}F'\left[\phi_mU_0(t)\phi_l\right](t,\theta)e^{-\delta\left\langle x\right\rangle}$ admits 
 an analytic continuation on $\C$ and we can estimate it to obtain the following.

\begin{lem}\label{l1} Let $n\geq3$ be odd. Then, the family of operators
 \[F'\left[e^{-\delta\left\langle x\right\rangle}U_0(t)e^{-\delta\left\langle x\right\rangle}\right](t,\theta):\Bb\rightarrow\Bb\]
  admits an analytic continuation with respect to $\theta$, which is continuous with respect to $t\in\R$,  from 
 $\{\theta\in\C\ :\ \im(\theta)>0\}$ to $\{ \theta\in\mathbb C\ :\ \im(\theta)>-\frac{\delta T}{4}\}$.\end{lem}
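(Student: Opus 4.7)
The plan is to exploit the partition-of-unity representation \eqref{7} together with the strong Huygens principle, which is valid because $n\geq3$ is odd. The key point is that for each fixed pair $(m,l)\in\mathbb N^2$ and each fixed $t$, only finitely many $k\in\mathbb Z$ contribute to
\[F'\left[\phi_m U_0(\cdot)\phi_l\right](t,\theta)=e^{it\theta/T}\sum_{k\in\mathbb Z}\phi_m U_0(t+kT)\phi_l\,e^{ik\theta},\]
so each $(m,l)$-summand is automatically entire in $\theta$ with values in $\mathcal L(\Bb)$. The work then reduces to establishing $(m,l)$-summable norm bounds that survive in the strip $\{\im(\theta)>-\delta T/4\}$.

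First I would identify, for fixed $t$ in one period and fixed $(m,l)$, the finite index set $K_{m,l}(t)\subset\mathbb Z$ on which $\phi_m U_0(t+kT)\phi_l$ does not vanish. Since $\phi_l$ is supported in $\{|x|\leq 2^{l+1}T\}$ and $\phi_m$ in $\{2^{m-1}T\leq|x|\leq 2^{m+1}T\}$, sharp Huygens forces
\[|t+kT|\in\left[2^{m-1}T-2^{l+1}T,\ 2^{m+1}T+2^{l+1}T\right],\]
so both $|K_{m,l}(t)|$ and $\max_{k\in K_{m,l}(t)}|k|$ are of order $2^{\max(m,l)}$ with an explicit constant $C_0$ I would track carefully.

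Next I would estimate the operator pieces: by factoring through $\B$, on which $U_0(\tau)$ is unitary, and using $\|e^{-\delta\left\langle x\right\rangle}\phi_m\|_{L^\infty}\leq C e^{-\delta\,2^{m-1}T}$ for $m\geq 1$ (with the obvious modification for $m=0$), one obtains
\[\left\|e^{-\delta\left\langle x\right\rangle}\phi_m U_0(\tau)\phi_l e^{-\delta\left\langle x\right\rangle}\right\|_{\mathcal L(\Bb)}\leq C\, e^{-\delta T(2^{m-1}+2^{l-1})},\quad\tau\in\R.\]
Combined with $|e^{ik\theta}|=e^{-k\,\im(\theta)}$ and $|k|\leq C_0\,2^{\max(m,l)}$, the $(m,l)$-term of \eqref{7} is bounded by
\[C\,2^{\max(m,l)}\,e^{-\delta T(2^{m-1}+2^{l-1})}\,e^{C_0|\im(\theta)|\,2^{\max(m,l)}},\]
which is summable over $(m,l)\in\mathbb N^2$ as long as $|\im(\theta)|<\delta T/(2C_0)$. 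Weierstrass uniform convergence on compact subsets of $\{\im(\theta)>-\delta T/4\}\times\R_t$ then yields an analytic continuation in $\theta$, continuous in $t$, that coincides on $\{\im(\theta)>0\}$ with the original family, as required.

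The hardest part will be tightening $C_0$ to exactly $2$, which is what produces the threshold $-\delta T/4$ advertised in the statement. The cardinality of $K_{m,l}(t)$ and the largest $|k|$ it contains each scale like $2^{\max(m,l)}$ and compete directly against the decay $e^{-\delta T\,2^{\max(m,l)-1}}$ coming from the weights, so a careful geometric analysis of the overlap between the Huygens shell $\{|x|\approx|t+kT|\}$ and the dyadic annulus $\{|x|\approx 2^m T\}$ is the delicate step. Once the summable estimate is in place, the remaining ingredients---entirety of each finite sum, uniform convergence of the series, and agreement with the original operator on $\{\im(\theta)>0\}$---are standard.
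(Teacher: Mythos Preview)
Your approach is correct and essentially identical to the paper's: Huygens makes each $F'[\phi_m U_0\phi_l](t,\theta)$ a finite sum in $k$ (hence entire in $\theta$), and the exponential decay $e^{-\delta T(2^{m-1}+2^{l-1})}$ extracted from the weights on the dyadic annuli beats the growth $|e^{ik\theta}|$ so that the double series converges locally uniformly on the strip. Your concern about forcing $C_0=2$ is an artifact of the $2^{\max(m,l)}$ bookkeeping; if you keep the additive form $k_{\max}=2^{m+1}+2^{l+1}=4(2^{m-1}+2^{l-1})$ and match it directly against the decay exponent $\delta T(2^{m-1}+2^{l-1})$, the factor $4$ is exactly what yields the threshold $\im(\theta)>-\delta T/4$ with no further tightening---this is precisely how the paper organizes the estimate.
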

 \begin{proof} Since $e^{-\delta\left\langle x\right\rangle}F'\left[\phi_mU_0(t)\phi_l\right](t,\theta)e^{-\delta\left\langle x\right\rangle}$ is $T$-periodic (see \cite{Ki3} and \cite{Vai}) we will only need to estimate it for $0\leq t\leq T$. Let $0\leq t\leq T$. Since, for all $m,l\in\mathbb N$, we have \[\abs{x}+\abs{x_0}\leq2^{m+1}T+2^{l+1}T,\quad  x\in\textrm{supp}\phi_m,\  x_0\in\textrm{supp}\phi_l, \] an application of the Huygens's principle yields
 \[\phi_mU_0(t+kT)\phi_l=0,\quad\textrm{for } k\geq2^{m+1}+2^{k+1}+1.\]
 Thus, for all $m,l\in\mathbb N$, $F'\left[\phi_mU_0(t)\phi_l\right](t,\theta)$ admits an analytic continuation to $\C$ and we have
 \[F'\left[\phi_mU_0(t)\phi_l\right](t,\theta)=e^{i\frac{t\theta}{T}}\left(\sum_{k=0}^{2^{m+1}+2^{l+1}}\phi_mU_0(t+kT)\phi_le^{ik\theta}\right).\]
Since for $m\geq1$ $\textrm{supp}\phi_m\subset\{ x\ :\ \abs{x}\geq2^{m-1}T\}$, one may show that, for all $0<r<\delta$, for $ m,l\geq1$,  we have
 \begin{equation}\label{8}\norm{e^{-\delta\left\langle x\right\rangle}F'\left[\phi_mU_0(t)\phi_l\right](t,\theta)e^{-\delta\left\langle x\right\rangle}}\lesssim e^{-(\delta-r)2^{m-1}T}e^{-(\delta-r)2^{l-1}T}\norm{F'\left[e^{-r\left\langle x\right\rangle}\phi_mU_0(t)\phi_le^{-r\left\langle x\right\rangle}\right](t,\theta)},\ \end{equation}
 and  get
\begin{equation}\label{9}\norm{e^{-\delta\left\langle x\right\rangle}F'\left[\phi_mU_0(t)\phi_0\right](t,\theta)e^{-\delta\left\langle x\right\rangle}}\lesssim e^{-(\delta-r)2^{m-1}T}\norm{F'\left[e^{-r\left\langle x\right\rangle}\phi_mU_0(t)\phi_0e^{-r\left\langle x\right\rangle}\right](t,\theta)},\  m\geq1,\end{equation} 
\begin{equation}\label{10}\norm{e^{-\delta\left\langle x\right\rangle}F'\left[\phi_0U_0(t)\phi_l\right](t,\theta)e^{-\delta\left\langle x\right\rangle}}\lesssim e^{-(\delta-r)2^{l-1}T}\norm{F'\left[e^{-r\left\langle x\right\rangle}\phi_0U_0(t)\phi_le^{-r\left\langle x\right\rangle}\right](t,\theta)},\  l\geq1.\end{equation}
For this purpose, we can choose $\psi_m\in\CI$ such that $\psi_m=1$ on supp$\phi_m$ supported on\\
 $\{ x\ :\ 2^{m-1}T-\frac{T}{4}\leq\abs{x}\leq2^{m+1}T+\frac{T}{4}\}$ with derivatives bounded independently of $m$ and notice that
\[\phi_me^{-\delta\left\langle x\right\rangle}=e^{-\delta\left\langle x\right\rangle}\phi_m=\left(e^{-(\delta-r)\left\langle x\right\rangle}\psi_m\right)e^{-r\left\langle x\right\rangle}\phi_m=\phi_me^{-r\left\langle x\right\rangle}\left(e^{-(\delta-r)\left\langle x\right\rangle}\psi_m\right)\]
with
\[\norm{e^{-(\delta-r)\left\langle x\right\rangle}\psi_mh}_{\Bb}\lesssim e^{-(\delta-r)2^{m-1}T}\norm{h}_{\Bb},\quad h\in\Bb.\]

Let $K\subset\{ \theta\in\mathbb C\ :\ \im(\theta)>-\frac{\delta T}{4}\}$  be a compact set.
According to representation \eqref{7} and estimates \eqref{8}, \eqref{9} and \eqref{10}, it remains to show that there exists $0<r<\delta$ such that we have
\begin{equation}\label{11}\begin{aligned}&\sum_{m,l=1}^{+\infty}\sup_{(t,\theta)\in [0,T]\times K}e^{-(\delta-r)2^{m-1}T}e^{-(\delta-r)2^{l-1}T}\norm{F'\left[e^{-r\left\langle x\right\rangle}\phi_mU_0(t)\phi_le^{-r\left\langle x\right\rangle}\right](t,\theta)}_{\mathcal L(\Bb)}\\
&+\sum_{m=0}^{+\infty}\sup_{(t,\theta)\in [0,T]\times K}e^{-(\delta-r)2^{m-1}T}\norm{F'\left[e^{-r\left\langle x\right\rangle}\phi_mU_0(t)\phi_0e^{-r\left\langle x\right\rangle}\right](t,\theta)}_{\mathcal L(\Bb)}\\
&+\sum_{l=1}^{+\infty}\sup_{(t,\theta)\in [0,T]\times K}e^{-(\delta-r)2^{l-1}T}\norm{F'\left[e^{-r\left\langle x\right\rangle}\phi_0U_0(t)\phi_le^{-r\left\langle x\right\rangle}\right](t,\theta)}_{\mathcal L(\Bb)}<+\infty\end{aligned}\end{equation}
Since $K$ is a compact set, there exists $\delta_1>0$, such that, for all $\theta\in K$, $\im(\theta)\geq\delta_1-\frac{\delta T}{4}$. Then, for all $\theta\in K$, $t\in\R$, $0<r<\delta$ and $m,l\in\mathbb N$, we obtain
\[ \norm{F'\left[e^{-r\left\langle x\right\rangle}\phi_mU_0(t)\phi_le^{-r\left\langle x\right\rangle}\right](t,\theta)}_{\mathcal L(\Bb)}\lesssim \sum_{k=0}^{2^{m+1}+2^{l+1}}\norm{e^{-r\left\langle x\right\rangle}\phi_mU_0(t+kT)\phi_le^{-r\left\langle x\right\rangle}}e^{\left(\frac{\delta T}{4}-\delta_1\right)k}.\]
Since the multiplication by $e^{-r\left\langle x\right\rangle}$ is continuous from $\B$ to $\Bb$, for all $k\in\mathbb N$, we obtain
\[\begin{aligned}\norm{e^{-r\left\langle x\right\rangle}\phi_mU_0(t+kT)\phi_le^{-r\left\langle x\right\rangle}}_{\mathcal L(\Bb)}&=\norm{\phi_me^{-r\left\langle x\right\rangle}U_0(t+kT)e^{-r\left\langle x\right\rangle}\phi_l}_{\mathcal L(\Bb)}\\
&\lesssim\norm{e^{-r\left\langle x\right\rangle}U_0(t+kT)e^{-r\left\langle x\right\rangle}}_{\mathcal L(\Bb)}\\
&\lesssim\norm{U_0(t+kT)}_{\mathcal L(\B)}\\
&\lesssim1.\end{aligned}\]
Thus, we get
\[ \norm{F'\left[e^{-r\left\langle x\right\rangle}\phi_mU_0(t)\phi_le^{-r\left\langle x\right\rangle}\right](t,\theta)}_{\mathcal L(\Bb)}\lesssim\sum_{k=0}^{2^{m+1}+2^{l+1}}e^{\left(\frac{\delta T}{4}-\delta_1\right)k}\]
and it follows
\[ \norm{F'\left[e^{-r\left\langle x\right\rangle}\phi_mU_0(t)\phi_le^{-r\left\langle x\right\rangle}\right](t,\theta)}_{\mathcal L(\Bb)}\lesssim\max\left(e^{\left(\frac{\delta T}{4}-\delta_1\right)\left(2^{m+1}+2^{l+1}\right)},2^{m+1}+2^{l+1}+1\right) .\]
Combining this inequality with estimates \eqref{8}, \eqref{9} and \eqref{10} for $r<\min\left(\delta,\delta_1\right)$, we obtain \eqref{11}. This completes the proof.\end{proof}

\subsection{Analytic continuation of $F'\left[e^{-\delta\left\langle x\right\rangle}U_0(t)e^{-\delta\left\langle x\right\rangle}\right](t,\theta)$ for $n$ even}
In this subsection we treat the case of even dimensions. Since for $n$ even the Huygens principle does not hold,  we must use  other arguments to prove Theorem \ref{t4}. 
Let $h_{m,l}\in\mathcal C^\infty(\R)$ be such that 
 \[0\leq h_{m,l}(t)\leq 1,\quad \sup_{m,l\in\mathbb N}\norm{\partial_t^jh_{m,l}}_{L^\infty(\R)}\leq C_j<\infty,\]
 \[h_{m,l}(t)=1\quad \textrm{for}\quad t\geq 2^{m+1}T+2^{l+1}T+\frac{T}{2}\]
 \[h_{m,l}(t)=0\quad \textrm{for}\quad t\leq 2^{m+1}T+2^{l+1}T+\frac{T}{3}.\]  
Then for $\im(\theta)>0$ and $m,l\in\mathbb N$, we obtain
\[F'\left[\phi_mU_0(t)\phi_l\right](t,\theta)=F'\left[(1-h_{m,l}(t))\phi_mU_0(t)\phi_l\right](t,\theta)+F'\left[h_{m,l}(t)\phi_mU_0(t)\phi_l\right](t,\theta).\]
Since $1-h_{m,l}(t)=0$ for $t\geq 2^{m+1}T+2^{l+1}T+\frac{T}{2}$, repeating the arguments used in the proof of Lemma \ref{l1}, we obtain the following.
\begin{lem}\label{l9}
Let $n\geq4$ be even. Then, for $\theta\in\{ \theta\in\mathbb C\ :\ \im(\theta)>-\frac{\delta T}{4}\}$, 
\[\sum_{m,l\in\mathbb N}F'\left[e^{-\delta\left\langle x\right\rangle}(1-h_{m,l}(t))\phi_mU_0(t)\phi_le^{-\delta\left\langle x\right\rangle}\right](t,\theta):\ \Bb\to\Bb\]
converge to a family of operator  analytic with respect to $\theta$, continuous with respect to $t$.\end{lem}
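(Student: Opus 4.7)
My plan is to mimic the argument of Lemma \ref{l1}, using the cutoff $1-h_{m,l}$ to play the role the Huygens principle played in the odd case. Specifically, since $1-h_{m,l}(t)=0$ for $t\geq 2^{m+1}T+2^{l+1}T+\frac{T}{2}$ and $U_0(\sigma)=0$ for $\sigma<0$, for each fixed $t\in[0,T]$ and each pair $(m,l)$ the infinite sum defining $F'[(1-h_{m,l})\phi_m U_0\phi_l](t,\theta)$ collapses to the finite sum
\[e^{it\theta/T}\sum_{k=0}^{K(m,l)}(1-h_{m,l}(t+kT))\phi_m U_0(t+kT)\phi_l\, e^{ik\theta},\]
with $K(m,l)\lesssim 2^{m+1}+2^{l+1}$. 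Consequently each such truncated $F'$ is an entire function of $\theta$, and after insertion of the weights $e^{-\delta\left\langle x\right\rangle}$ it defines an entire $\mathcal L(\Bb)$-valued family. Analyticity and continuity of the limit will therefore reduce to proving local uniform convergence of the double series in $m,l$.

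Next I would repeat the spatial weighting device used in Lemma \ref{l1}. Picking cutoffs $\psi_m$ slightly thickening $\textrm{supp}\,\phi_m$ with derivatives uniformly bounded in $m$, one writes $\phi_m e^{-\delta\left\langle x\right\rangle}=(\psi_m e^{-(\delta-r)\left\langle x\right\rangle})\,e^{-r\left\langle x\right\rangle}\phi_m$, which for $m\geq 1$ yields a gain factor $e^{-(\delta-r)2^{m-1}T}$ on the operator norm at the cost of replacing the outer weight by $e^{-r\left\langle x\right\rangle}$; symmetrically for $l$. On a compact $K\subset\{\im\theta>-\delta T/4\}$, choose $\delta_1>0$ with $\im\theta\geq\delta_1-\frac{\delta T}{4}$ throughout $K$, so that $\abs{e^{ik\theta}}\leq e^{(\delta T/4-\delta_1)k}$. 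Because $U_0$ is unitary on $\B$ and multiplication by $e^{-r\left\langle x\right\rangle}$ maps $\B$ continuously into $\Bb$, each operator $e^{-r\left\langle x\right\rangle}\phi_m U_0(t+kT)\phi_l e^{-r\left\langle x\right\rangle}$ has norm $\lesssim 1$ uniformly in $k,t$. Summing the geometric progression in $k$ up to $K(m,l)$ then gives
\[\norm{e^{-\delta\left\langle x\right\rangle}F'[(1-h_{m,l})\phi_m U_0\phi_l](t,\theta)e^{-\delta\left\langle x\right\rangle}}_{\mathcal L(\Bb)}\lesssim e^{-(\delta-r)T(2^{m-1}+2^{l-1})}\max\!\left(1,e^{(\delta T/4-\delta_1)(2^{m+1}+2^{l+1})}\right).\]

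The decisive point, and the only place the argument is more than a rerun of Lemma \ref{l1}, is the exponent balance. Regrouping the $m$-dependence yields
\[-(\delta-r)T\cdot 2^{m-1}+(\delta T/4-\delta_1)\cdot 2^{m+1}=2^{m+1}\left[\frac{rT}{4}-\delta_1\right],\]
which is negative precisely when $r<4\delta_1/T$. Choosing $r\in(0,\min(\delta,4\delta_1/T))$ therefore produces genuine geometric decay in both $m$ and $l$, so that the double series converges absolutely, uniformly for $(t,\theta)\in[0,T]\times K$; the terms with $m=0$ or $l=0$ are handled as in the proof of Lemma \ref{l1}. By the $T$-periodicity in $t$ of each summand (modulo the $e^{it\theta/T}$ prefactor), this upgrades to uniform convergence on $\R\times K$, whence the limit is analytic in $\theta$ and continuous in $t$. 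The main obstacle to watch is precisely the tightness of the level $2^{m+1}T+2^{l+1}T$ fixed in the definition of $h_{m,l}$: it must be chosen large enough that the truncation is consistent (i.e. effectively retains the finite-band support) yet small enough that the spatial decay supplied by $e^{-\delta\left\langle x\right\rangle}$ still absorbs the permissible growth $e^{(\delta T/4)k}$ on the half-plane $\im\theta>-\delta T/4$.
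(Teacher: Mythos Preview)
Your proposal is correct and follows exactly the route the paper takes: the paper's own proof of this lemma is a one-line ``repeat the arguments used in the proof of Lemma~\ref{l1}'', and you have carried out precisely that repetition, with the cutoff $1-h_{m,l}$ replacing Huygens to truncate the $k$-sum at $K(m,l)\lesssim 2^{m+1}+2^{l+1}$. Your explicit exponent balance $2^{m+1}[\tfrac{rT}{4}-\delta_1]$ and the resulting choice $r<\min(\delta,4\delta_1/T)$ are in fact sharper than what the paper writes (the paper records $r<\min(\delta,\delta_1)$, which only covers the case $T\leq 4$), so your version actually clarifies the computation.
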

Following Lemma \ref{l9}, if we show that
\begin{equation}\label{cou}\sum_{m,l\in\mathbb N}F'\left[e^{-\delta\left\langle x\right\rangle}h_{m,l}(t)\phi_mU_0(t)\phi_le^{-\delta\left\langle x\right\rangle}\right](t,\theta):\ \Bb\to\Bb\end{equation}
converge to a family of operators  analytic with respect to $\theta$ satisfying the properties described in Theorem \ref{t4}, we will deduce the analytic continuation 
of $F'\left[e^{-\delta\left\langle x\right\rangle}U_0(t)e^{-\delta\left\langle x\right\rangle}\right](t,\theta)$ for $n$ even. 

The main point in the proof of  Theorem \ref{t4} for $n$ even is  the convergence of \eqref{cou}. In order to prove this result  we will apply the Herglotz-Petrovskii formula (see \cite{GS}, Chapter X of \cite{Va} and the proof of Lemma 6 in \cite{Vai})  that gives  an explicit formula of the  Green's function associate to some solutions of the free wave equation and replaces the Huygens principle which is lacking when the dimension is even. 

Let us recall the Herglotz-Petrovskii formula for the free wave equation.
Consider $E^0(t,x,x_0)$ the Green's function  of \eqref{problem} with $n\geq2$ even, $V=0$, $\tau=0$, $F=0$ and $f_1=0$. More precisely, $E^0(t,x,x_0)$ is the solution of 
\begin{equation}   \left\{\begin{aligned}
(\partial_t^2-\Delta_{x})E^0&=0,\ \ t>0,\ \ x\in\R^n,\\
E^0(0,x,x_0)&=0,\\
\partial_tE^0(0,x,x_0)&=\delta(x-x_0),\end{aligned}\right.\end{equation}
where $\delta(x-x_0)$ is the delta-function. For all $a,b,c>0$  and all  domains \[Q_{a,b,c}=\{(t,x,x_0)\ :\ t\geq a+b+c,\ \abs{x}\leq a,\ \abs{x_0}\leq b\},\]
by the Herglotz-Petrovskii formula, we have
\begin{equation}\label{hp}E^0(t,x,x_0)=\int_{\mathbb S^{n-1}}\frac{\d\omega(\sigma)}{(\left\langle x-x_0,\sigma\right\rangle+t)^{n-1}},\quad (t,x,x_0)\in Q_{a,b,c}.\end{equation}

Now, consider the operator 
\[\frac{\sin(t\Lambda)}{\Lambda} : L^2(\R^n)\rightarrow \dot{H}^1(\R^n)\]
with $\Lambda=\sqrt{-\Delta}$. The solution $u$ of \eqref{problem} with $V=0$, $\tau=0$, $F=0$ and $f_1=0$ is given by \[u(t)=\frac{\sin(t\Lambda)}{\Lambda}f_2.\]
 Applying \eqref{hp}, for all $m,l\in\mathbb N$, we obtain an explicit formula of the kernel of $h_{m,l}(t)\phi_m\frac{\sin(t\Lambda)}{\Lambda}\phi_l$ (see the proof of Lemma \ref{l2}). Using this formula, we deduce the analytic continuation  with respect to $\theta$ of the family 
\[F'\left[e^{-\delta\left\langle x\right\rangle}h_{m,l}(t)\phi_m\frac{\sin(t\Lambda)}{\Lambda}\phi_le^{-\delta\left\langle x\right\rangle}\right](t,\theta).\]
 Then, in Lemma \ref{l2}, we establish the analytic continuation with respect to $\theta$ of the family  
\[\sum_{m,l=0}^{+\infty}F'\left[e^{-\delta\left\langle x\right\rangle}h_{m,l}(t)\phi_m\frac{\sin(t\Lambda)}{\Lambda}\phi_le^{-\delta\left\langle x\right\rangle}\right](t,\theta).\]
Combining the result of Lemma \ref{l2} with the Duhamel's principle we show the convergence of \eqref{cou}
and we deduce Theorem \ref{t4} for $n$ even.

\begin{lem}\label{l2}
Let $n\geq4$ be even. Then 
\[\sum_{m,l\in\mathbb N}F'\left[e^{-\delta\left\langle x\right\rangle}h_{m,l}(t)\phi_m\frac{\sin(t\Lambda)}{\Lambda}\phi_le^{-\delta\left\langle x\right\rangle}\right](t,\theta):L^{2}(\R^n)\rightarrow H^1(\R^n)\]
converge to a family of operators analytic  with respect to $\theta$, continuous with respect to $t\in\R$ and   satisfying the properties described in Theorem \ref{t4}. Moreover,
\[\sum_{m,l\in\mathbb N}F'\left[e^{-\delta\left\langle x\right\rangle}h_{m,l}(t)\phi_m\frac{\sin(t\Lambda)}{\Lambda}\phi_le^{-\delta\left\langle x\right\rangle}\right](t,\theta):L^{2}(\R^n)\rightarrow L^2(\R^n)\]
converge to a family of operators analytic  with respect to $\theta$, $\mathcal C^1$ with respect to $t\in\R$ and  satisfying the properties described in Theorem \ref{t4}.\end{lem}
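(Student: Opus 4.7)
The plan is to use the Herglotz--Petrovskii representation \eqref{hp} to write an explicit Schwartz kernel for $h_{m,l}(t)\phi_m\frac{\sin(t\Lambda)}{\Lambda}\phi_l$, apply $F'$ term by term, extract the analytic structure in $\theta$ from the resulting polylogarithm-type series, and finally sum in $(m,l)$ using the exponential decay gained from $e^{-\delta\left\langle x\right\rangle}$.

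Choosing $a=2^{m+1}T$, $b=2^{l+1}T$, $c=T/3$ in \eqref{hp} and noting that on the support of $h_{m,l}(t)\phi_m(x)\phi_l(x_0)$ one has $t\geq|x|+|x_0|+T/3$, the formula gives the kernel
\[
h_{m,l}(t)\phi_m(x)\phi_l(x_0)\int_{\mathbb S^{n-1}}\frac{\d\omega(\sigma)}{(\left\langle x-x_0,\sigma\right\rangle+t)^{n-1}}.
\]
Applying $F'$ produces the kernel $e^{it\theta/T}\phi_m(x)\phi_l(x_0)\int_{\mathbb S^{n-1}}S_{m,l}(t,\theta;a(\sigma))\,\d\omega(\sigma)$ where
\[
S_{m,l}(t,\theta;a)=\sum_{k\in\mathbb Z}\frac{h_{m,l}(t+kT)\,e^{ik\theta}}{(a+t+kT)^{n-1}},\qquad a=\left\langle x-x_0,\sigma\right\rangle.
\]

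I split $S_{m,l}=S^{\mathrm{fin}}_{m,l}+S^{\mathrm{tail}}_{m,l}$, with $S^{\mathrm{fin}}_{m,l}$ the finite contribution of those $k$ for which $h_{m,l}(t+kT)\not\equiv 1$, and $S^{\mathrm{tail}}_{m,l}=\sum_{k\geq k_{m,l}(t)}e^{ik\theta}/(a+t+kT)^{n-1}$ with $k_{m,l}(t)\sim 2^{m+1}+2^{l+1}$. The finite part is entire in $\theta$ and $\mathcal C^\infty$ in the remaining variables. For the tail I use
\[
\frac{1}{(a+t+kT)^{n-1}}=\frac{1}{(n-2)!}\int_0^{+\infty}s^{n-2}e^{-s(a+t+kT)}\,\d s,
\]
and exchange the $k$-sum with the $s$-integral (legitimate for $\im(\theta)>0$). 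This gives a polylogarithm-type expression which, for $n-1$ an odd positive integer (i.e.\ $n$ even), admits the classical analytic continuation across $\im(\theta)=0$ with branch cut $2\pi\mathbb Z+i\R^-$, as in Lemma~6 of \cite{Vai}. One obtains
\[
S^{\mathrm{tail}}_{m,l}(t,\theta;a)=A_{m,l}(t,\theta;a)+\theta^{n-2}\log(\theta)\,B_{m,l}(t,\theta;a),
\]
with $A_{m,l},B_{m,l}$ analytic in $\theta$ and smooth in $(t,a)$. Since $B_{m,l}|_{\theta=0}$ is a polynomial in $a=\left\langle x-x_0,\sigma\right\rangle$, integration in $\sigma$ followed by multiplication by $\phi_m(x)\phi_l(x_0)$ gives a finite-rank operator, which is the source of the finite-rank structure of the $R_{i,j}$ in Theorem~\ref{t4}.

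It remains to sum in $(m,l)$. The cut-offs give $\norm{e^{-\delta\left\langle x\right\rangle}\phi_m}_{L^\infty}\leq Ce^{-\delta 2^{m-1}T}$ for $m\geq 1$ (similarly for $\phi_l$), while on any compact subset $K$ of $\{\im(\theta)>-\delta T/4\}\setminus(2\pi\mathbb Z+i\R^-)$ the explicit representations yield a pointwise bound
\[
\bigl|S^{\mathrm{fin}}_{m,l}(t,\theta;a)+S^{\mathrm{tail}}_{m,l}(t,\theta;a)\bigr|\leq C_K(2^m+2^l)T^{-(n-1)}e^{c(2^m+2^l)|\im(\theta)|}
\]
uniformly in $t\in[0,T]$ and $(x,x_0,\sigma)$. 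Choosing $K$ with $|\im(\theta)|\leq\delta T/4-\eta$ for some $\eta>0$, the exponential loss $e^{c(2^m+2^l)|\im(\theta)|}$ is strictly dominated by the gain $e^{-\delta T(2^{m-1}+2^{l-1})}$, producing a convergent Hilbert--Schmidt bound from $L^2(\R^n)$ to $L^2(\R^n)$. An $x$-derivative landing on $\phi_m$, on $e^{-\delta\left\langle x\right\rangle}$, or inside the Herglotz integrand preserves this bound and yields convergence in $L^2\to H^1$. The $\mathcal C^1$ regularity in $t$ in $\mathcal L(L^2)$ follows from $\partial_t[\sin(t\Lambda)/\Lambda]=\cos(t\Lambda)$ and the same analysis with exponent $n$ in place of $n-1$ in the Herglotz denominator.

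The main difficulty is the coupling between the analytic continuation past $\im(\theta)=0$ and the summation in $(m,l)$: once $\im(\theta)<0$, absolute convergence of $S^{\mathrm{tail}}_{m,l}$ is lost and every bound must be read off the polylogarithm representation rather than from the series itself. Controlling its growth uniformly in the parameter $a$ and in the large index $k_{m,l}\sim 2^{m+1}+2^{l+1}$, and arranging matters so that this growth is at worst $e^{c(2^m+2^l)|\im(\theta)|}$, is the key technical point; the window $|\im(\theta)|<\delta T/4$ appearing in the lemma is precisely the range in which the cut-off factor $e^{-\delta T(2^{m-1}+2^{l-1})}$ strictly dominates this growth, so that the double series converges in operator norm.
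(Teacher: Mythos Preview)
Your approach is correct in outline but takes a genuinely different route from the paper. Instead of a Laplace/polylogarithm representation for $S^{\mathrm{tail}}_{m,l}$, the paper rewrites $F'[E_{m,l}](t,\theta)=\int_{\mathbb S^{n-1}}e^{-i\theta\langle x-x_0,\sigma\rangle/T}v_{m,l}\,\d\omega(\sigma)$ with $v_{m,l}$ carrying the \emph{full} phase $e^{i\theta(\langle x-x_0,\sigma\rangle+t+kT)/T}$, and observes that $(-iT\partial_\theta)^{n-1}$ applied to each summand brings down exactly the factor $(\langle x-x_0,\sigma\rangle+t+kT)^{n-1}$, cancelling the Herglotz denominator. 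What remains is a pure geometric series in $e^{i\theta}$ summing to an explicit expression with a simple pole on $2\pi\mathbb Z$; the $(m,l)$-summation is then handled exactly as in the odd-dimensional Lemma~\ref{l1}, and integrating $(-iT\partial_\theta)^{n-1}v$ back $(n-1)$ times in $\theta$ produces $Ce^{-\delta\langle x\rangle}e^{-\delta\langle x_0\rangle}\theta^{n-2}\log\theta$ plus an analytic remainder. This differentiation trick makes your ``key technical point'' transparent: the growth in $(m,l)$ after continuation is read off directly from the explicit factor $e^{i(2^{m+1}+2^{l+1})\theta}$, rather than extracted from a Lerch-type transcendent whose uniform control in the shift $a+t$ and in the large starting index $k_{m,l}$ you assert but do not actually prove (and your pointer to Lemma~6 of \cite{Vai} is imprecise --- that lemma treats the operator $J(t,\theta)$, not the scalar continuation you need). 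The paper's route also makes the finite-rank structure immediate: the logarithmic coefficient in $v$ is a constant times $e^{-\delta\langle x\rangle}e^{-\delta\langle x_0\rangle}$, independent of $\sigma$, so after the $\sigma$-integration the operators $\partial_\theta^j B(\theta)|_{\theta=0}$ are convolutions of $e^{-\delta\langle x\rangle}g$ with homogeneous polynomials, followed by multiplication by $e^{-\delta\langle x\rangle}$. In your argument the corresponding claim that $B_{m,l}|_{\theta=0}$ is polynomial in $a$ still needs a line of justification.
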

\begin{proof} 
Let $E_{m,l}(t,x,x_0)$ be the kernel of $e^{-\delta\left\langle x\right\rangle}h_{m,l}(t)\phi_m\frac{\sin(t\Lambda)}{\Lambda}\phi_le^{-\delta\left\langle x\right\rangle}$.
Since \[\abs{x}+\abs{x_0}\leq 2^{m+1}T+2^{l+1}T,\quad  x\in\textrm{supp}\phi_m,\ x_0\in\textrm{supp}\phi_l\] and $h_{m,l}(t)=0$ for $t\leq 2^{m+1}T+2^{l+1}T+\frac{T}{3}$, the formula \eqref{hp} implies 
\[E_{m,l}(t,x,x_0)=\int_{\mathbb S^{n-1}}\frac{e^{-\delta\left\langle x\right\rangle}h_{m,l}(t)\phi_m(x)\phi_l(x_0)e^{-\delta\left\langle x_0\right\rangle}}{(\left\langle x-x_0,\sigma\right\rangle+t)^{n-1}}\d\omega(\sigma).\]
Now notice that  for $\im(\theta)>0$, we have the representation
\[F'\left[E_{m,l}(t,x,x_0)\right](t,\theta)=\int_{\mathbb S^{n-1}}e^{-\frac{i\theta\left\langle x-x_0,\sigma\right\rangle}{T}}v_{m,l}(t,x,x_0,\sigma,\theta)\d\omega(\sigma)\]
with
\[v_{m,l}(t,x,x_0,\sigma,\theta)=\sum_{k=0}^{+\infty}\left(\frac{e^{-\delta\left\langle x\right\rangle}h_{m,l}(t+kT)\phi_m(x)\phi_l(x_0)e^{-\delta\left\langle x_0\right\rangle}}{(\left\langle x-x_0,\sigma\right\rangle+t+kT)^{n-1}}\right)e^{\frac{i\theta\left(\left\langle x-x_0,\sigma\right\rangle+kT+t\right)}{T}}.\]
Then, for $0\leq t<T$, we get
\[\left(-iT\partial_\theta\right)^{n-1}v_{m,l}(t,x,x_0,\sigma,\theta)=\left(\sum_{k=0}^{+\infty}h_{m,l}(t+kT)e^{\frac{i\theta\left(\left\langle x-x_0,\sigma\right\rangle+kT+t\right)}{T}}\right)e^{-\delta\left\langle x\right\rangle}\phi_m(x)\phi_l(x_0)e^{-\delta\left\langle x_0\right\rangle},\]

\begin{equation}\label{lulu}\begin{array}{l}\left(-iT\partial_\theta\right)^{n-1}v_{m,l}(t,x,x_0,\sigma,\theta)\\
=\left(h_{m,l}(t+2^{m+1}T+2^{l+1}T)e^{i\left(2^{m+1}+2^{l+1}\right)\theta}+\frac{e^{i\left(2^{m+1}+2^{l+1}+1\right)\theta}}{\left(1-e^{i\theta}\right)}\right)e^{\frac{i\theta\left(\left\langle x-x_0,\sigma\right\rangle+t\right)}{T}}e^{-\delta\left\langle x\right\rangle}\phi_m(x)\phi_l(x_0)e^{-\delta\left\langle x_0\right\rangle}.\end{array}\end{equation}
Let $E(t,x,x_0)$ be the kernel of
\[\sum_{m,l=0}^{+\infty}h_{m,l}(t)\phi_m\frac{\sin(t\Lambda)}{\Lambda}\phi_l.\]
For $\im(\theta)>0$, we have
\begin{equation}\label{13}F'\left[E(t,x,x_0)\right](t,\theta)=\sum_{m,l=0}^{+\infty}F'\left[E_{m,l}(t,x,x_0)\right](t,\theta)=\int_{\mathbb S^{n-1}}e^{-\frac{i\theta\left\langle x-x_0,\sigma\right\rangle}{T}}v(t,x,x_0,\sigma,\theta)\d\omega(\sigma),\end{equation}
where
\[v(t,x,x_0,\sigma,\theta)=\sum_{m,l=0}^{+\infty}v_{m,l}(t,x,x_0,\sigma,\theta).\]
Repeating the arguments used in the proof Lemma \ref{l1}, we deduce that
\[\sum_{m,l=0}^{+\infty}(1-e^{i\theta})\left(-iT\partial_\theta\right)^{n-1}v_{m,l}(t,x,x_0,\sigma,\theta)\]
admits an analytic continuation to $\{\theta\in\mathbb C\ :\ \im(\theta)>-\frac{\delta T}{4}\}$. Notice that, for $\im(\theta)>0$, we have
\begin{equation}\label{lulu1}\sum_{m,l=0}^{+\infty}(1-e^{i\theta})\left(-iT\partial_\theta\right)^{n-1}v_{m,l}(t,x,x_0,\sigma,\theta)=(1-e^{i\theta})\left(-iT\partial_\theta\right)^{n-1}v(t,x,x_0,\sigma,\theta).\end{equation}
Hence, $\left(-iT\partial_\theta\right)^{n-1}v$ admits a meromorphic  continuation in  $\{\theta\in\mathbb C\ :\ \im(\theta)>-\frac{\delta T}{4}\}$ with first order poles only at the points $2k\pi$, $k\in\mathbb Z$. Therefore,  upon integration of $\left(-iT\partial_\theta\right)^{n-1}v$ with respect to $\theta$ we find that in the region $t\in[0,T]$, $\sigma\in \mathbb S^{n-1}$, $x,\ x_0\in\R^n$, $\theta\in\{\theta\in\mathbb C\ :\ \im(\theta)>-\frac{\delta T}{4},\ \theta\notin2\pi\mathbb Z+i\R^-\}$ the function $v$ is analytic with respect to $\theta$ and infinitely differentiable with respect to the remaining variables. In the same way we can establish this result for $t\in[-T,T]$.  Here, for $\theta\in\{\theta\in\mathbb C\ :\ \abs{\theta}\leq\epsilon_0,\ \theta\notin2\pi\mathbb Z+i\R^-\}$ with $\epsilon_0$ sufficiently small, we have
\[v=Ce^{-\delta\left\langle x\right\rangle}e^{-\delta\left\langle x_0\right\rangle}\theta^{n-2}\log(\theta)+\tilde{v},\]
 where $\tilde{v}$ is analytic at $\theta=0$ and $C$ is constant.
 Hence  from   \eqref{lulu}, \eqref{13} and \eqref{lulu1}, we deduce that
 $F'[E(t,x,x_0)](t,\theta)$
 admits an analytic continuation, with respect to $\theta$, to
 \[\{\theta\in\mathbb C\ :\ \im(\theta)>-\frac{\delta T}{4},\ \theta\notin2\pi\mathbb Z+i\R^-\}.\] Moreover, for all $\theta\in\{\theta\in\mathbb C\ :\ \im(\theta)>-\frac{\delta T}{4},\ \theta\notin2\pi\mathbb Z+i\R^-\}$, we have
 \begin{equation}\label{lalali}F'[E(t,x,x_0)](t,\theta)=e^{-\left(\frac{\delta T+4\im(\theta)}{2T}\right)\left\langle x\right\rangle}H(t,x,x_0,\theta)e^{-\left(\frac{\delta T+4\im(\theta)}{2T}\right)\left\langle x_0\right\rangle},\end{equation}
 where $H(t,x,x_0,\theta)$ is $\mathcal C^\infty$ and bounded with respect to $t\in\R$, $x\in\R^n$ and $x_0\in\R^n$. Also,  for $\theta\in\{\theta\in\mathbb C\ :\ \abs{\theta}\leq\epsilon_0,\ \theta\notin2\pi\mathbb Z+i\R^-\}$ with $\epsilon_0$ sufficiently small, we have the representation
 \begin{equation}\label{tutu}F'[E(t,x,x_0)](t,\theta)=C\left(\int_{\mathbb S^{n-1}}e^{-\frac{i\theta\left\langle x-x_0,\sigma\right\rangle}{T}}\d\omega(\sigma)\right)e^{-\delta\left\langle x\right\rangle}e^{-\delta\left\langle x_0\right\rangle}\theta^{n-2}\log(\theta)+\tilde{E},\end{equation}
 where $\tilde{E}$ is analytic at $\theta=0$. Thus, the family of operators
 \[\sum_{m,l=0}^{+\infty}F'\left[e^{-\delta\left\langle x\right\rangle}h_{m,l}(t)\phi_m\frac{\sin(t\Lambda)}{\Lambda}\phi_le^{-\delta\left\langle x\right\rangle}\right](t,\theta)\]
 admits an analytic continuation to $\{\theta\in\mathbb C\ :\ \im(\theta)>-\frac{\delta T}{4},\ \theta\notin2\pi\mathbb Z+i\R^-\}$ which is defined by
 \[\left(\sum_{m,l=0}^{+\infty}F'\left[e^{-\delta\left\langle x\right\rangle}h_{m,l}(t)\phi_m\frac{\sin(t\Lambda)}{\Lambda}\phi_le^{-\delta\left\langle x\right\rangle}\right](t,\theta)g\right)(x)=\int_{\R^n}F'[E(t,x,x_0)](t,\theta)g(x_0)\d x_0\]
 and, for $\epsilon_0$ sufficiently small and $\theta\in\{\theta\in\mathbb C\ :\ \abs{\theta}\leq\epsilon_0,\ \theta\notin2\pi\mathbb Z+i\R^-\}$, we have the following representation
\[\sum_{m,l=0}^{+\infty}F'\left[e^{-\delta\left\langle x\right\rangle}h_{m,l}(t)\phi_m\frac{\sin(t\Lambda)}{\Lambda}\phi_le^{-\delta\left\langle x\right\rangle}\right](t,\theta)=B_1(\theta)\theta^{n-2}\log(\theta)+ C_1(t,\theta).\] 
Here $C_1(t,\theta)$  is $\mathcal C^\infty$  and $T$-periodic with respect to $t$ and analytic with respect to $\theta$  and $B_1(\theta)$ is analytic with respect to $\theta$ for $\abs{\theta}\leq\epsilon_0$. From \eqref{tutu}, the operators $\left(\partial^j_\theta B_1(\theta)\right)_{|\theta=0}$ take the following form
\[\left(\left(\partial^j_\theta B_1(\theta)\right)_{|\theta=0}\right)g=e^{-\delta\left\langle x\right\rangle}\left(P_j(x)\right)*\left(e^{-\delta\left\langle x\right\rangle}g\right),\quad j\in\mathbb N,\ g\in L^2(\R^n)\]
with $P_j$ an homogeneous polynomial of order $j$. This completes the proof.\end{proof}

Consider  the operator 
\[\cos(t\Lambda) : \dot{H}^1(\R^n)\rightarrow \dot{H}^1(\R^n)\]
with $\Lambda=\sqrt{-\Delta}$. The solution $u$ of \eqref{problem} with $V=0$, $\tau=0$, $F=0$ and $f_2=0$ is given by
\[u(t)=\cos(t\Lambda)f_1.\]

To prove the meromorphic continuation of the transformation $F'$ of all solutions of the free wave equation we must show the following.
\begin{lem}\label{l10}
Let $n\geq4$ be even. Then, the family of operators  
\[\sum_{m,l\in\mathbb N}F'\left[e^{-\delta\left\langle x\right\rangle}h_{m,l}(t)\phi_m\cos(t\Lambda)\phi_le^{-\delta\left\langle x\right\rangle}\right](t,\theta):H^1(\R^n)\rightarrow H^1(\R^n),\]
converges to a family of operators analytic  with respect to $\theta$, continuous with respect to $t\in\R$ and   satisfying the properties described in Theorem \ref{t4}
and
\[\sum_{m,l\in\mathbb N}F'\left[e^{-\delta\left\langle x\right\rangle}h_{m,l}(t)\phi_m\cos(t\Lambda)\phi_le^{-\delta\left\langle x\right\rangle}\right](t,\theta):H^1(\R^n)\rightarrow L^2(\R^n)\]
converges to a family of operators analytic  with respect to $\theta$, $\mathcal C^1$ with respect to $t\in\R$ and   satisfying the properties described in Theorem \ref{t4}.\end{lem}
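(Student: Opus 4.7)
The plan is to mirror the proof of \lemref{l2}, replacing $\sin(t\Lambda)/\Lambda$ by $\cos(t\Lambda)$. Since $\cos(t\Lambda)f_1$ solves the free wave equation with Cauchy data $(f_1,0)$, its kernel coincides with $\partial_tE^0$; on the Petrovski region determined by the supports of $\phi_m$, $\phi_l$ and $h_{m,l}$, differentiating \eqref{hp} in $t$ gives the kernel of $e^{-\delta\left\langle x\right\rangle}h_{m,l}(t)\phi_m\cos(t\Lambda)\phi_le^{-\delta\left\langle x\right\rangle}$ as
\[E^{\cos}_{m,l}(t,x,x_0)=-(n-1)\int_{\mathbb S^{n-1}}\frac{e^{-\delta\left\langle x\right\rangle}h_{m,l}(t)\phi_m(x)\phi_l(x_0)e^{-\delta\left\langle x_0\right\rangle}}{(\left\langle x-x_0,\sigma\right\rangle+t)^{n}}\d\omega(\sigma).\]
The only structural change from \lemref{l2} is that the denominator has order $n$ instead of $n-1$, reflecting the extra $\partial_t$.

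Applying $F'$ termwise as in \eqref{13} produces an integral over $\mathbb S^{n-1}$ of
\[w_{m,l}(t,x,x_0,\sigma,\theta)=\sum_{k\geq 0}\frac{h_{m,l}(t+kT)e^{-\delta\left\langle x\right\rangle}\phi_m(x)\phi_l(x_0)e^{-\delta\left\langle x_0\right\rangle}}{(\left\langle x-x_0,\sigma\right\rangle+t+kT)^{n}}e^{\frac{i\theta(\left\langle x-x_0,\sigma\right\rangle+kT+t)}{T}}.\]
Now I would apply $(-iT\partial_\theta)^n$, which is one more derivative than in \lemref{l2}; by the same geometric resummation as in \eqref{lulu} the polynomial denominator is killed and one obtains
\[(-iT\partial_\theta)^n w_{m,l}=\left(h_{m,l}(t+2^{m+1}T+2^{l+1}T)e^{i(2^{m+1}+2^{l+1})\theta}+\frac{e^{i(2^{m+1}+2^{l+1}+1)\theta}}{1-e^{i\theta}}\right)e^{\frac{i\theta(\left\langle x-x_0,\sigma\right\rangle+t)}{T}}e^{-\delta\left\langle x\right\rangle}\phi_m(x)\phi_l(x_0)e^{-\delta\left\langle x_0\right\rangle}.\]

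Multiplying by $1-e^{i\theta}$ and summing over $m,l$ converges absolutely in $\{\im(\theta)>-\frac{\delta T}{4}\}$ by the exponential decay estimates used in \lemref{l1} (the weights $e^{-\delta\left\langle x\right\rangle}\phi_m$ contributing $e^{-(\delta-r)2^{m-1}T}$ for any $0<r<\delta$), with simple poles only at $\theta\in2\pi\mathbb Z$. Antidifferentiating $n$ times in $\theta$ recovers $v=\sum_{m,l}w_{m,l}$ as an analytic function on $\{\im(\theta)>-\frac{\delta T}{4},\ \theta\notin2\pi\mathbb Z+i\R^-\}$; near $\theta=0$ the extra antiderivative turns the simple pole into a leading singularity $\theta^{n-1}\log(\theta)=\theta\cdot\theta^{n-2}\log(\theta)$, one power more regular than in \lemref{l2}. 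This yields the required form $B(\theta)\theta^{n-2}\log(\theta)+C(t,\theta)$ with $B(0)=0$, and a kernel bound analogous to \eqref{lalali} provides the $H^1\to H^1$ continuity in $t$.

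For the $H^1\to L^2$ target with $\mathcal C^1$ regularity in $t$, I would use $\partial_t\cos(t\Lambda)=-\Lambda\sin(t\Lambda)$, continuous $H^1\to L^2$ since $\norm{\Lambda f}_{L^2}=\norm{\nabla f}_{L^2}\leq\norm{f}_{H^1}$; differentiating the above analysis once in $t$ only multiplies each term in the sum by a uniformly bounded factor, so $\mathcal C^1$ regularity follows from the already established convergence. The main obstacle will be verifying that the $n$-fold $\theta$-antidifferentiation produces exactly the announced finite-rank coefficients for the Taylor expansion of $B$ at the origin; as in \lemref{l2}, this reduces to checking that the operators $\left(\partial_\theta^jB(\theta)\right)|_{\theta=0}$ take the convolution form $g\mapsto e^{-\delta\left\langle x\right\rangle}(P_j\ast(e^{-\delta\left\langle x\right\rangle}g))$ with $P_j$ a homogeneous polynomial of degree $j$, and hence are finite-rank thanks to the two-sided exponential weights.
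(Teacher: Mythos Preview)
Your approach is correct but takes a genuinely different route from the paper's. The paper does \emph{not} redo the kernel analysis for $\cos(t\Lambda)$. Instead it chooses a cutoff $\alpha\in\mathcal C^\infty(\R)$ with $\alpha=0$ near $0$ and $\alpha=1$ for $t\geq T/4$, and uses Duhamel's principle in the form
\[\cos(t\Lambda)=\int_0^{T/4}\frac{\sin((t-s)\Lambda)}{\Lambda}\,[\partial_t^2,\alpha](s)\cos(s\Lambda)\,\d s,\qquad t\geq T,\]
so that $h_{m,l}(t)\phi_m\cos(t\Lambda)\phi_l$ becomes an $s$--integral of $\phi_m\frac{\sin((t-s)\Lambda)}{\Lambda}\chi_l$ composed with the bounded operator $e^{\delta\langle x\rangle}[\partial_t^2,\alpha](s)\cos(s\Lambda)\phi_le^{-\delta\langle x\rangle}$ (here finite speed of propagation for $s\leq T/4$ inserts the auxiliary cutoffs $\chi_l$ and controls the weight $e^{\pm\delta\langle x\rangle}$). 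Lemma~\ref{l2} is then applied verbatim to the $\sin/\Lambda$ factor, and the result follows.

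Your direct approach---differentiating the Herglotz--Petrovski kernel once in $t$, applying $(-iT\partial_\theta)^n$ instead of $(-iT\partial_\theta)^{n-1}$, and antidifferentiating $n$ times---is cleaner conceptually and even yields the extra information $B(0)=0$ (singularity $\theta^{n-1}\log\theta$ rather than $\theta^{n-2}\log\theta$). The paper's route is more economical in that it recycles Lemma~\ref{l2} wholesale; it also avoids having to re-derive the analogue of \eqref{lalali} for the new kernel, at the cost of introducing the auxiliary families $\chi_l$, $g_{m,l}$ and a second splitting into $g_{m,l}$ and $h_{m,l}(1-g_{m,l})$ parts. Your justification of the $\mathcal C^1$--in--$t$ statement is slightly informal (``multiplies each term by a uniformly bounded factor'' is not literally what happens, since $\partial_t$ raises the power in the denominator), but the conclusion is correct: once the analogue of \eqref{lalali} is established, smoothness in $t$ of the continued kernel is immediate.
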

\begin{proof}
 Choose $\alpha\in\mathcal C^\infty(\R)$  such that $0\leq \alpha(t)\leq 1$, $\alpha(t)=0$ for $t\leq \frac{T}{5}$ and $\alpha(t)=1$ for $t\geq \frac{T}{4}$. An application of the Duhamel's principle (see \cite{Ki3})  yields
\[\cos(t\Lambda)=\int_0^{\frac{T}{4}} \frac{\sin((t-s)\Lambda)}{\Lambda}\left[\partial_t^2,\alpha\right](s)\cos(s\Lambda)\d s,\quad t\geq T.\]
Let  $\left(\chi_m\right)_m$ be a familly of functions lying in $\CI$ such that for $m\geq2$
$\chi_m=1$ on\\
 $\{x\ :\ 2^{m-1}T-T\leq\abs{x}\leq2^{m+1}T+T\}$, $\chi_1=1$ on $\abs{x}\leq4T+T$ and $\chi_0=1$ on $\abs{x}\leq 3T$,
\[\textrm{supp}\chi_m\subset\{x\ :\ 2^{m-1}T-\frac{5T}{4}\leq\abs{x}\leq2^{m+1}T+\frac{5T}{4}\},\]
\[\textrm{supp}\chi_1\subset\{x\ :\ \abs{x}\leq4T+\frac{5T}{4}\},\]
\[\textrm{supp}\chi_0\subset\{x\ :\ \abs{x}\leq2T+\frac{5T}{4}\},\]
and 
\[\sum_{m=0}^{+\infty}\chi_m(x)\leq2.\]
An application of the finite speed of propagation yields
\begin{equation}\label{kuku}\phi_m\cos(t\Lambda)\phi_l=\int_0^{\frac{T}{4}} \phi_m\frac{\sin((t-s)\Lambda)}{\Lambda}\chi_l\left[\partial_t^2,\alpha\right](s)\cos(s\Lambda)\phi_l\d s,\quad t\geq T\end{equation}
Let $g_{m,l}\in\mathcal C^\infty(\R)$ be such that 
 \[0\leq g_{m,l}(t)\leq 1,\quad \sup_{m,l\in\mathbb N}\norm{\partial_t^jg_{m,l}}_{L^\infty(\R)}\leq C_j<\infty,\]
 \[g_{m,l}(t)=1\quad \textrm{for}\quad t\geq 2^{m+1}T+2^{l+1}T+2T+\frac{T}{2}\]
 \[g_{m,l}(t)=0\quad \textrm{for}\quad t\leq 2^{m+1}T+2^{l+1}T+2T+\frac{T}{3}.\]  
Applying the arguments of  Lemma \ref{l2}, we can show that \[\sum_{m,l=0}^{+\infty}F'\left[e^{-\delta\left\langle x\right\rangle}g_{m,l}(t)\phi_m\frac{\sin((t-s)\Lambda)}{\Lambda}\chi_le^{-\delta\left\langle x\right\rangle}\right](t,\theta)\]
admits the analytic continuation described in Lemma \ref{l2} uniformly with respect to $s\in\left[0,\frac{T}{4}\right]$.
Also, since $1-g_{m,l}(t)=0$ for $t\geq 2^{m+1}T+2^{l+1}T+3T$, reapeting the arguments used for $n$ odd we show that
\[\sum_{m,l=0}^{+\infty}F'\left[e^{-\delta\left\langle x\right\rangle}h_{m,l}(t)(1-g_{m,l}(t))\phi_m\frac{\sin((t-s)\Lambda)}{\Lambda}\chi_le^{-\delta\left\langle x\right\rangle}\right](t,\theta)\]
admits an analytic continuation to $\im(\theta)>-\frac{\delta T}{4}$   uniformly with respect to $s\in\left[0,\frac{T}{4}\right]$.
Moreover, since $\left[\partial_t^2,\alpha\right](t)=0$ for $t\notin\left[\frac{T}{5},\frac{T}{4}\right]$ we have \[e^{\delta\left\langle x\right\rangle}\left[\partial_t^2,\alpha\right](s)\cos(s\Lambda)e^{-\delta\left\langle x\right\rangle}\in\mathcal{L}(H^1(\R^n))\]
and 
\[\norm{e^{\delta\left\langle x\right\rangle}\left[\partial_t^2,\alpha\right](s)\cos(s\Lambda)\phi_le^{-\delta\left\langle x\right\rangle}}_{\mathcal{L}(H^1(\R^n))}\lesssim \norm{e^{\delta\left\langle x\right\rangle}\left[\partial_t^2,\alpha\right](s)\cos(s\Lambda)e^{-\delta\left\langle x\right\rangle}}_{\mathcal{L}(H^1(\R^n))}\lesssim1.\]

Thus, applying $F'$ to \eqref{kuku}, for $\im(\theta)>0$, we obtain
\[\begin{aligned}&F'\left[e^{-\delta\left\langle x\right\rangle}h_{m,l}(t)\phi_m\cos(t\Lambda)\phi_le^{-\delta\left\langle x\right\rangle}\right](t,\theta)\\
&=\int_0^{\frac{T}{4}} F'\left[e^{-\delta\left\langle x\right\rangle}g_{m,l}(t)\phi_m\frac{\sin((t-s)\Lambda)}{\Lambda}\chi_le^{-\delta\left\langle x\right\rangle}\right](t,\theta)e^{\delta\left\langle x\right\rangle}\left[\partial_t^2,\alpha\right](s)\cos(s\Lambda)\phi_le^{-\delta\left\langle x\right\rangle}\d s\\
&\ \ \ +\int_0^{\frac{T}{4}} F'\left[e^{-\delta\left\langle x\right\rangle}h_{m,l}(t)(1-g_{m,l}(t))\phi_m\frac{\sin((t-s)\Lambda)}{\Lambda}\chi_le^{-\delta\left\langle x\right\rangle}\right](t,\theta)e^{\delta\left\langle x\right\rangle}\left[\partial_t^2,\alpha\right](s)\cos(s\Lambda)\phi_le^{-\delta\left\langle x\right\rangle}\d s.\end{aligned}\]
From this last representation we deduce the analytic continuation of
\[\sum_{m,l=0}^{+\infty}F'\left[e^{-\delta\left\langle x\right\rangle}h_{m,l}(t)\phi_m\cos(t\Lambda)\phi_le^{-\delta\left\langle x\right\rangle}\right](t,\theta).\]\end{proof}

\textit{Proof of Theorem \ref{t4}.}
We obtain the analytic continuation of $F'\left[e^{-\delta\left\langle x\right\rangle}U_0(t) e^{-\delta\left\langle x\right\rangle}\right](t,\theta)$ by combining Lemma \ref{l9}, Lemma \ref{l2} and Lemma \ref{l10} (see also the proof of Lemma 2 in \cite{Ki4}).\qed

Applying Theorem \ref{t4} to the representation \eqref{6}, we show easily the following (see also Lemma 6 in \cite{Vai}).

\begin{cor}\label{c1}
The family of operators
 \[e^{-\delta\left\langle x\right\rangle}J(t,\theta)e^{-\delta\left\langle x\right\rangle}:\mathcal{H}_{1,per}(\R^{1+n})\rightarrow \mathcal{H}_{1,per}(\R^{1+n})\]
  admits an analytic continuation with respect to $\theta$,  from 
 $\{\theta\in\C\ :\ \im(\theta)>0\}$ to $\{ \theta\in\mathbb C\ :\ \im(\theta)>-\frac{\delta T}{4}\}$ for $n$ odd and to \\
 $\{ \theta\in\mathbb C\ :\ \im(\theta)>-\frac{\delta T}{4},\ \theta\notin2\pi\mathbb Z+i\R^-\}$ for $n$ even. Moreover, for $n$ even, there exists $\epsilon_0>0$ such that for $\theta\in\{ \theta\in\mathbb C\ :\ \theta\notin2\pi\mathbb Z+i\R^-,\ \abs{\theta}\leq \epsilon_0\}$  we have the representation
 \[e^{-\delta\left\langle x\right\rangle}J(t,\theta)e^{-\delta\left\langle x\right\rangle}=B_2(\theta)\theta^{n-2}\log(\theta)+C_2(t,\theta),\]
 where $\log$ is the logarithm defined on $\C\setminus i\R^-$,  $C_2(t,\theta)$ is $\mathcal C^\infty$  and $T$-periodic with respect to $t$ and analytic with respect to $\theta$, $B_2(\theta)$ is analytic with respect to $\theta$ for $\abs{\theta}\leq\epsilon_0$ for $\abs{\theta}\leq \epsilon_0$,  for all  $l\in\mathbb N$  $\left(\partial_\theta^l\left(B_2\right)(\theta)\right)_{|\theta=0}$ are finite rank operators.
\end{cor}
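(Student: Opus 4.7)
\textit{Proof proposal for Corollary \ref{c1}.} The plan is to feed Theorem \ref{t4} into the explicit representation
\[
e^{-\delta\langle x\rangle}J(t,\theta)e^{-\delta\langle x\rangle}\phi
=\int_0^T F'\!\left[e^{-\delta\langle x\rangle}U_0(t)e^{-\delta\langle x\rangle}\right]\!(t-s,\theta)\,\phi(s)\,\d s
\]
recorded just before Proposition \ref{p1}, and then verify each property requested of $e^{-\delta\langle x\rangle}J(t,\theta)e^{-\delta\langle x\rangle}$ by moving it under the integral sign. First, Theorem \ref{t4} says that the integrand admits an analytic continuation with respect to $\theta$, continuous in its time argument, from $\{\im(\theta)>0\}$ to $\{\im(\theta)>-\delta T/4\}$ (resp. the even-dimensional domain with $2\pi\mathbb Z+i\R^-$ removed). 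Since the continuation is continuous in $t-s$, uniformly on compact subsets of the $\theta$-domain, and since $s$ varies in the bounded interval $[0,T]$, the integral defines a bounded operator on $\mathcal H_{1,per}(\R^{1+n})$ that depends analytically on $\theta$ in the same domains; $T$-periodicity in $t$ of the output is preserved because $F'[\cdot](t-s,\theta)$ is $T$-periodic in $t$ (a property of $F'$ recalled before Proposition \ref{ra}).

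For the asymptotic expansion at $\theta=0$ in even dimension, I would substitute the expansion given by Theorem \ref{t4} into the integral. Writing
\[
F'\!\left[e^{-\delta\langle x\rangle}U_0(t)e^{-\delta\langle x\rangle}\right]\!(t-s,\theta)
=B(\theta)\,\theta^{n-2}\log(\theta)+C(t-s,\theta),
\]
valid for $\theta\in\{|\theta|\le\epsilon_0,\;\theta\notin 2\pi\mathbb Z+i\R^-\}$, and pulling $B(\theta)$ (which is independent of the time variable) out of the integral, we obtain
\[
e^{-\delta\langle x\rangle}J(t,\theta)e^{-\delta\langle x\rangle}\phi
=B(\theta)\theta^{n-2}\log(\theta)\int_0^T\phi(s)\,\d s
+\int_0^T C(t-s,\theta)\phi(s)\,\d s.
\]
This identifies $B_2(\theta)\phi:=B(\theta)\int_0^T\phi(s)\,\d s$ and $C_2(t,\theta)\phi:=\int_0^T C(t-s,\theta)\phi(s)\,\d s$. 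Analyticity of $B_2$ for $|\theta|\le\epsilon_0$ follows from that of $B$; the finite-rank property of $(\partial_\theta^l B_2)_{|\theta=0}$ is inherited from the finite-rank property of $(\partial_\theta^l B)_{|\theta=0}$ since $B_2(\theta)=B(\theta)\circ I$, where $I:\phi\mapsto\int_0^T\phi(s)\,\d s$ is a continuous map into $\mathcal H_1(\R^n)$. Smoothness and $T$-periodicity of $C_2(t,\theta)$ in $t$ follow immediately from the corresponding properties of $C$, since differentiation in $t$ commutes with the $s$-integral and $C(t-s,\theta)$ is $T$-periodic in $t$ for each $s$.

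The only genuinely delicate point is checking that the convergence and bounds from Theorem \ref{t4} are uniform enough in $t-s$ as $s$ ranges over $[0,T]$ to make the integral definition legitimate and to preserve the analytic continuation; this however is built into Theorem \ref{t4}, which asserts continuity in $t$ of the continued family and thus provides a uniform bound on compact $t$-intervals. No further ingredients beyond Theorem \ref{t4}, the explicit formula for $e^{-\delta\langle x\rangle}J(t,\theta)e^{-\delta\langle x\rangle}$, and the elementary observation that integration against $\phi\in\mathcal H_{1,per}$ over the compact interval $[0,T]$ preserves analyticity, periodicity, smoothness and the finite-rank structure, are required.
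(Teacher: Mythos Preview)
Your proposal is correct and follows precisely the approach indicated by the paper, which merely says ``Applying Theorem \ref{t4} to the representation \eqref{6}, we show easily the following'' without further detail. You have spelled out exactly that: feed the analytic continuation and the even-dimensional expansion of $F'[e^{-\delta\langle x\rangle}U_0(t)e^{-\delta\langle x\rangle}](t-s,\theta)$ from Theorem \ref{t4} into the integral formula for $e^{-\delta\langle x\rangle}J(t,\theta)e^{-\delta\langle x\rangle}$, and observe that integration over $s\in[0,T]$ preserves analyticity, periodicity, smoothness, and the finite-rank structure.
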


\section{The meromorphic continuation of the resolvent}
In this section we will apply the results of Sections 1 and 2 to  show the meromorphic continuation of $R(\theta)$ described in Theorem \ref{t4}. 
According to Corollary \ref{c1} and Proposition \ref{p1}, if we show that there exists $\theta_0$ such that  for all $G\in \mathcal{H}_{1,per}(\R^{1+n})$ the equation
\begin{equation}\label{14}g+e^{-\delta\left\langle x\right\rangle}J(t,\theta_0)Q(t)e^{\delta\left\langle x\right\rangle}g=G\end{equation}
has a solution $g\in\mathcal{H}_{1,per}(\R^{1+n})$, since $e^{-\delta\left\langle x\right\rangle}J(t,\theta_0)Q(t)e^{\delta\left\langle x\right\rangle}$ is a compact operator, the operator
\[\textrm{Id}+e^{-\delta\left\langle x\right\rangle}J(t,\theta_0)Q(t)e^{\delta\left\langle x\right\rangle}\] will be invertible and we can conclude by applying Theorem 8 of \cite{Vai} and the analytic Fredholm Theorem. In order to solve \eqref{14}, we introduce the following Hilbert spaces.
\begin{defi}
Let $I$ be an interval of $\R$. Then, we denote by $\mathcal H_{\gamma}(I\times\R^n)$ the space
\[\mathcal H_{\gamma}(I\times\R^n)=H^{\gamma}(I\times\R^n)\times H^{\gamma-1}(I\times\R^n).\]
\end{defi}
We can easily prove that there exists $r>0$ such that  $W_0\in\mathcal L\left(\mathcal H_{2}^r(\R^{1+n})\right)$. To solve \eqref{14} we will use the following result that we show  by  applying  some arguments of Vainberg (see Theorem 5 in \cite{Vai}).
\begin{Thm}\label{t8}\label{l3}
There exists $A>r$ such that for all $h\in\mathcal H_{1}^r(\R^{1+n})$ the equation
\begin{equation}\label{15}\phi(t)+e^{-\delta\left\langle x\right\rangle}W_0\left(Q(t)e^{\delta\left\langle x\right\rangle}\phi(t)\right)(t)=h(t)\end{equation}
admits a unique solution $\phi\in\mathcal H_{1}^A(\R^{1+n})$.\end{Thm}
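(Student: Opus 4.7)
\medskip

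\noindent\textit{Proof proposal.} The plan is to write down an explicit inverse via the perturbed Duhamel formula and then verify that it maps $\mathcal{H}_1^r(\R^{1+n})$ continuously into $\mathcal{H}_1^A(\R^{1+n})$ for $A$ large enough.

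\textbf{Step 1 (reduction).} Setting $\tilde{\phi}=e^{\delta\langle x\rangle}\phi$ and $g=e^{\delta\langle x\rangle}h$, the equation \eqref{15} is equivalent to
\[\tilde{\phi}(t)+W_0\bigl(Q(t)\tilde{\phi}(t)\bigr)(t)=g(t),\qquad t\in\R.\]
Because (H1) gives $\delta<D<2D+3\epsilon$, the multiplication by $Q(t)e^{\delta\langle x\rangle}$ is bounded on $H^1$ uniformly in $t$, so it is harmless to reformulate the problem in this way.

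\textbf{Step 2 (candidate inverse via Duhamel).} Let $W$ be the perturbed analog of $W_0$, namely $W(\psi)(t):=\int_{-\infty}^t U(t,s)\psi(s)\,\d s$ (well defined on functions supported in $\{t\geq \tau_0\}$ thanks to \eqref{3}). Integrating the Duhamel identity $U(t,s)=U_0(t-s)-\int_s^t U_0(t-\sigma)Q(\sigma)U(\sigma,s)\,\d\sigma$ against $f(s)$ and exchanging orders gives the operator identity $W=W_0-W_0\,Q\,W$, equivalently
\[\bigl(\mathrm{Id}+W_0 Q\bigr)W=W_0.\]
A direct substitution then shows that
\[\tilde{\phi}(t):=g(t)-W\bigl(Q(\cdot)g(\cdot)\bigr)(t)=g(t)-\int_{-\infty}^t U(t,s)Q(s)g(s)\,\d s\]
solves $\tilde{\phi}+W_0(Q\tilde{\phi})=g$. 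Translating back, the candidate solution is
\[\phi(t)=h(t)-e^{-\delta\langle x\rangle}\int_{-\infty}^t U(t,s)Q(s)e^{\delta\langle x\rangle}h(s)\,\d s.\]

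\textbf{Step 3 ($\mathcal{H}_1^A$ estimate).} Fix $A>\max(r,k_1)+1$ where $k_1$ is the constant in \eqref{energy} for $\gamma=1$. The kernel of the integral operator in Step~2 is controlled by combining three facts: the decay $|Q(s)e^{\delta\langle x\rangle}|\lesssim e^{-(2D+3\epsilon-\delta)\langle x\rangle}$ from (H1), which makes $Q(s)e^{\delta\langle x\rangle}$ continuous from $\mathcal{H}_1(\R^n)$ into $\dot{\mathcal{H}}_1(\R^n)$; the energy bound \eqref{energy} which gives $\|U(t,s)\|_{\mathcal L(\dot{\mathcal{H}}_1)}\leq Ce^{k_1(t-s)}$ for $t\geq s$; and the fact that multiplication by $e^{-\delta\langle x\rangle}$ sends $\dot{\mathcal{H}}_1$ continuously into $\mathcal{H}_1$ for $n\geq 3$. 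Together they yield
\[\bigl\|e^{-\delta\langle x\rangle}U(t,s)Q(s)e^{\delta\langle x\rangle}h(s)\bigr\|_{\mathcal{H}_1(\R^n)}\lesssim e^{k_1(t-s)}\|h(s)\|_{\mathcal{H}_1(\R^n)}.\]
Multiplying by $e^{-At}$ and using $\|h(s)\|_{\mathcal{H}_1(\R^n)}\leq e^{rs}\|e^{-rs}h(s)\|_{\mathcal{H}_1(\R^n)}$, the convolution in $t$ against $e^{(k_1-A)(t-s)}\mathbf 1_{\{t\geq s\}}\in L^1(\R)$ gives the desired $L^2_t\mathcal{H}_1(\R^n)$ control. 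To promote this to an $H^1$-in-spacetime bound we differentiate the variation-of-parameters formula once in $t$: the leading term is $Q(t)h(t)$, which sits in the right space by (H1) and the hypothesis $h\in\mathcal{H}_1^r$, and the remaining term has the same structure as before with $U(t,s)$ replaced by its derivative, handled by rewriting $\partial_t U(t,s)$ through the second component of the system and applying the energy estimate to $\dot{\mathcal{H}}_2$ (equivalently, working componentwise). The $x$-derivatives are absorbed by the smoothness of $V$ and the extra decay contained in $(2D+3\epsilon)-\delta$.

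\textbf{Step 4 (uniqueness).} If $\phi\in\mathcal{H}_1^A$ solves the homogeneous equation ($h=0$), then $\tilde{\phi}=e^{\delta\langle x\rangle}\phi$ satisfies $\tilde{\phi}+W_0(Q\tilde{\phi})=0$, i.e. $\tilde{\phi}=U_0\ast(-Q\tilde{\phi})$, which is the Duhamel representation of a solution of $\partial_t^2 u-\Delta_x u+Vu=0$ with no incoming data because of the convention \eqref{3}. Because $e^{-At}\tilde{\phi}\in H^1$, one may take $\tau\to-\infty$ in the (well-posed) forward Cauchy problem for the perturbed wave equation and conclude $\tilde{\phi}\equiv 0$, hence $\phi\equiv 0$.

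\textbf{Main obstacle.} The delicate point is Step~3: extracting a genuine $H^1$-in-spacetime bound (not merely $L^2_tH^1_x$) from the variation-of-parameters formula, while keeping the exponential weights in $t$ and $x$ correctly aligned. One has to track carefully how the decay $e^{-(2D+3\epsilon)\langle x\rangle}$ of $V$ compensates for the two $e^{\delta\langle x\rangle}$ factors and still leaves enough room to gain the continuity $\dot{\mathcal{H}}_1\hookrightarrow\mathcal{H}_1$ after multiplication by $e^{-\delta\langle x\rangle}$, and how the differentiability in $t$ is obtained without losing the growth-in-time control that forces $A>k_1$.
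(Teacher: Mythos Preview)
Your approach is correct and differs genuinely from the paper's. The paper proceeds by (i) uniqueness via the homogeneous perturbed wave equation with zero Cauchy data, (ii) existence on the dense set $e^{-\delta\langle x\rangle}\mathcal{H}_1^r\cap(C^\infty)^2$ by solving the unweighted Volterra equation $\phi_1+W_0Q\phi_1=e^{\delta\langle x\rangle}h$, and (iii) an a~priori bound obtained not from \eqref{energy} but from the smoothing estimate $\|e^{-\delta\langle x\rangle}\hat W_0 Q e^{\delta\langle x\rangle}g\|_{\mathcal H_2}\lesssim\|g\|_{\mathcal H_1}$ together with a Gronwall argument for the instantaneous energy $E(T_1,\psi)$; the exponent $A$ emerges from the Gronwall constant. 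You instead invert the equation explicitly through the perturbed propagator, using the operator identity $(\mathrm{Id}+W_0Q)W=W_0$, and read off the $\mathcal H_1^A$ bound directly from the exponential growth \eqref{energy}. This is shorter and more conceptual; the paper's route, relying only on the free propagator, would transplant more readily to settings where a perturbed propagator with controlled growth is not already known.

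Two points to tighten. First, Step~1 is misleading as written: for generic $h\in\mathcal H_1^r$ the function $g=e^{\delta\langle x\rangle}h$ is not in any workable space, so the reformulation is not literally harmless. Your actual formula in Step~2 sidesteps this because only $Q(s)e^{\delta\langle x\rangle}h(s)=(0,Ve^{\delta\langle x\rangle}h_1(s))$ enters, and (H1) makes that bounded. Second, the promotion from $L^2_t\mathcal H_1(\R^n)$ to $\mathcal H_1^A(\R^{1+n})$ in Step~3 is simpler than you indicate: since $Qe^{\delta\langle x\rangle}h$ has vanishing first component, the boundary term in $\partial_t$ of the integral vanishes and $\partial_t$ of its first component equals its second component, which you have already bounded. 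No appeal to $\dot{\mathcal H}_2$ energy estimates is needed; your parenthetical ``working componentwise'' is the right argument, and the sentence before it can be dropped.
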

\begin{proof}
For all $A>r$, let  $\phi=(\phi_1,\phi_2)\in \mathcal H_{1}^A(\R^{1+n})$ be such that
\[\phi(t)+e^{-\delta\left\langle x\right\rangle}\int_0^tU_0(t-s)Q(s)e^{\delta\left\langle x\right\rangle}\phi(s)\d s=0.\]
Since $\int_0^tU_0(t-s)Q(s)e^{\delta\left\langle x\right\rangle}\phi(s)\d s\in\mathcal H_{1}^A(\R^{1+n})$, we get $e^{\delta\left\langle x\right\rangle}\phi\in\mathcal H_{1}^A(\R^{1+n})$ and
\[e^{\delta\left\langle x\right\rangle}\phi(t)=-\int_0^tU_0(t-s)Q(s)e^{\delta\left\langle x\right\rangle}\phi(s)\d s\]
Thus, $Z=e^{\delta\left\langle x\right\rangle}\phi_1$ will be the solution of
\[  \left\{\begin{aligned}
\partial_t^2(Z)-\Delta_{x}Z+V(t,x)Z&=0,\ \ (t,x)\in{\R}\times{\R}^{n},\\
(Z, \partial_t(Z))(0,x)&=(0,0),\ \ x\in{\R}^n,\end{aligned}\right.\]
and $e^{\delta\left\langle x\right\rangle}\phi_2=\partial_t(Z)$. Hence, $\phi=(0,0)$ and \eqref{15} admits a unique solution in $\mathcal H_{1}^A(\R^{1+n})$. For the proof of Theorem \ref{t8} it now suffices to show that \eqref{15} has a solution for $h$ on a dense set of functions in $\mathcal H_{1}^r(\R^{1+n})$ and that this solution satisfies
\begin{equation}\label{40}\norm{\phi}_{\mathcal H_{1}^A(\R^{1+n})}\lesssim \norm{h}_{\mathcal H_{1}^r(\R^{1+n})},\end{equation}
for some $A>r$. We take $\mathcal C^\infty(\R^{1+n})\times\mathcal C^\infty(\R^{1+n})\cap e^{-\delta\left\langle x\right\rangle}\mathcal H_{1}^r(\R^{1+n})$ as our dense set. Clearly,   we  have
\[\left(\textrm{Id}+W_0Q(t)\right)\phi\in\mathcal C^\infty(\R^{1+n})\times\mathcal C^\infty(\R^{1+n})\cap \mathcal H_{1}^r(\R^{1+n}),\quad\phi\in\mathcal C^\infty(\R^{1+n})\times\mathcal C^\infty(\R^{1+n})\cap \mathcal H_{1}^r(\R^{1+n}).\]
Let $h\in\mathcal C^\infty(\R^{1+n})\times\mathcal C^\infty(\R^{1+n})\cap e^{-\delta\left\langle x\right\rangle}\mathcal H_{1}^r(\R^{1+n})$ and consider the equation 
\begin{equation}\label{50}\phi_1+W_0Q(t)\phi_1=e^{\delta\left\langle x\right\rangle}h.\end{equation}
Equation \eqref{50} is a Volterra equation and it is uniquely solvable in $\mathcal C^\infty(\R^{1+n})\times\mathcal C^\infty(\R^{1+n})\cap \mathcal H_{1}^r(\R^{1+n})$. Now choose $\phi=e^{-\delta\left\langle x\right\rangle}\phi_1$. We obtain
 \[e^{\delta\left\langle x\right\rangle}\phi+W_0Q(t)e^{\delta\left\langle x\right\rangle}\phi=e^{\delta\left\langle x\right\rangle}h\]
 and it follows that $\phi$ is the solution of \eqref{15}. It remains to show \eqref{40} and conclude by density.
Let 
\[\psi=e^{-rt}\phi,\quad \hat{U}_0(t)=e^{-rt}U_0(t).\]
Then, \eqref{15} can be rewritten in the form
\begin{equation}\label{41}\psi+e^{-\delta\left\langle x\right\rangle}\hat{W}_0Q(t)e^{\delta\left\langle x\right\rangle}\psi=e^{-rt}h,\quad\textrm{where }(\hat{W}_0Q(t)\psi)(t)=\int_0^t\hat{U}_0(t-s)Q(s)\psi(s)\d s.\end{equation}
Repeating the arguments used in Proposition \ref{p1}, we can easily prove that for any $g\in \mathcal H_{1}(\R^{1+n})$, we get
\[\norm{e^{-\delta\left\langle x\right\rangle}\hat{W}_0Q(t)e^{\delta\left\langle x\right\rangle}g}_{\mathcal H_{2}(\R^{1+n})}\lesssim\norm{g}_{\mathcal H_{1}(\R^{1+n})}.\]
Now let $T_1>0$. We have
\begin{equation}\label{42}\norm{e^{-\delta\left\langle x\right\rangle}\hat{W}_0Q(t)e^{\delta\left\langle x\right\rangle}g}_{\mathcal H_{2}([0,T_1]\times\R^{n})}\lesssim\norm{g}_{\mathcal H_{1}(\R^{1+n})}.\end{equation}
Suppose that the function $\tilde{\psi}$ is equal to $\psi$ for $t<T_1$ and
\begin{equation}\label{43}\norm{\tilde{\psi}}_{\mathcal H_{1}(\R^{1+n})}\leq 2\norm{\psi}_{\mathcal H_{1}([0,T_1]\times\R^{n})}.\end{equation}
Clearly, such a function exists (it is constructed by the means of extension operator). We recall that $\psi\in\mathcal C^\infty(\R^{1+n})\times\mathcal C^\infty(\R^{1+n})\cap \mathcal H_1(\R^{1+n})$ and, consequently, the right-hand side of \eqref{43} is bounded. The left hand-side of \eqref{42} is independent of the values of $g(t)$ for $t>T_1$. It therefore follows from \eqref{42} for $g=\tilde{\psi}$ and \eqref{43} that
\begin{equation}\label{44}\norm{e^{-\delta\left\langle x\right\rangle}\hat{W}_0Q(t)e^{\delta\left\langle x\right\rangle}\psi}_{\mathcal H_{2}([0,T_1]\times\R^{n})}\leq 2C_1\norm{\psi}_{\mathcal H_{1}([0,T_1]\times\R^{n})}.\end{equation}
For all $h=(h_1,h_2)\in\C^2$, we set $(h)_1=h_1$. We denote by $E(T_1,g)$ the quantity
\[E(T_1,g)=\norm{g_1(T_1)}_{H^1(\R^n)}^2+\norm{\partial_t\left(g_1\right)(T_1)}_{L^2(\R^n)}^2+\norm{g_2(T_1)}_{L^2(\R^n)}^2,\]
where $g=(g_1,g_2)$. Notice that $e^{-\delta\left\langle x\right\rangle}\hat{W}_0Q(t)e^{\delta\left\langle x\right\rangle}\psi\in\mathcal C^\infty(\R^{1+n})\times\mathcal C^\infty(\R^{1+n})\cap\mathcal H_{2}([0,T_1]\times\R^{n})$,\\
 $\left[\left(e^{-\delta\left\langle x\right\rangle}\hat{W}_0Q(t)e^{\delta\left\langle x\right\rangle}\psi\right)_1\right]_{t=0}=0$ and $\left[\partial_t\left(e^{-\delta\left\langle x\right\rangle}\hat{W}_0Q(t)e^{\delta\left\langle x\right\rangle}\psi\right)_1\right]_{t=0}=0$. Thus, we have
 \[E(0,e^{-\delta\left\langle x\right\rangle}\hat{W}_0Q(t)e^{\delta\left\langle x\right\rangle}\psi)=0.\]
Then, for $\psi$ compactly supported in $x$, we obtain
 \[E(T_1,e^{-\delta\left\langle x\right\rangle}\hat{W}_0Q(t)e^{\delta\left\langle x\right\rangle}\psi)\leq\int_0^{T_1}\abs{\partial_t\left[E(t,e^{-\delta\left\langle x\right\rangle}\hat{W}_0Q(t)e^{\delta\left\langle x\right\rangle}\psi)\right]}\d t\]
 and an application of the Hölder's inequality yields
 \[E(T_1,e^{-\delta\left\langle x\right\rangle}\hat{W}_0Q(t)e^{\delta\left\langle x\right\rangle}\psi)\lesssim\norm{e^{-\delta\left\langle x\right\rangle}\hat{W}_0Q(t)e^{\delta\left\langle x\right\rangle}\psi}^2_{\mathcal H_{2}([0,T_1]\times\R^{n})}.\]
 By density, we can extend this estimate to all $\psi\in\mathcal C^\infty(\R^{1+n})\times\mathcal C^\infty(\R^{1+n})\cap \mathcal H_1(\R^{1+n})$.
Since $\psi(0)=0$, combining this result with \eqref{44}, we obtain
\begin{equation}\label{45}E(T_1,e^{-\delta\left\langle x\right\rangle}\hat{W}_0Q(t)e^{\delta\left\langle x\right\rangle}\psi)\leq C\norm{\psi}^2_{\mathcal H_{1}([0,T_1]\times\R^{n})}=C\int_0^{T_1}E(t,\psi)\d t\end{equation}
with $C>0$ independent of $\psi$ and $T_1$.
Clearly, for any $\phi_1$, $\phi_2$ we have
\[E((T_1,\phi_1+\phi_2)\leq 2\left[E((T_1,\phi_1)+E((T_1,\phi_2)\right].\]
Consequently, it follows from \eqref{41} and \eqref{45} that
\[E(T_1,\psi)=E(T_1,e^{-rt}h-e^{-\delta\left\langle x\right\rangle}\hat{W}_0Q(t)e^{\delta\left\langle x\right\rangle}\psi)\leq 2C\int_0^{T_1}E(t,\psi)\d t+2E(T_1,e^{-rt}h).\]
Hence, by the Gronwall's lemma we get
\[E(T_1,\psi)=2E(T_1,e^{-rt}h)e^{2CT_1}.\]
If we now multiply this inequality by $e^{-CT_1}$and integrate with respect to $T_1>0$ we obtain \eqref{40} with $A=\sqrt{2C}+r$.\end{proof}
\ \\
\textit{Proof of Theorem \ref{t3}.} As it was mentioned in the beginning of this section, it remains to solve \eqref{14} for some $\theta_0\in\C$ with $\im(\theta_0)>rT$. Choose $\theta_0=i(A+1)T$ with $A$ the constant of Theorem \ref{t8}. 
Let $G\in \mathcal{H}_{1,per}(\R^{1+n})$ and let $\alpha\in\mathcal C^\infty(\R)$ be such that $0\leq \alpha(t)\leq1$, $\alpha(t)=0$ for $t\leq \frac{T}{2}$ and $\alpha(t)=1$ for $t\geq \frac{2T}{3}$. Then, $h(t)=\alpha(t)G(t)\in \mathcal H_{1}^r(\R^{1+n})$. Let $\phi$ be the solution of \eqref{15} with $h(t)=\alpha(t)G(t)$. We have
\begin{equation}\label{46}\phi(t)+e^{-\delta\left\langle x\right\rangle}W_0\left(Q(t)e^{\delta\left\langle x\right\rangle}\phi(t)\right)(t)=\alpha(t)G(t).\end{equation}
According to Proposition \ref{ro}, since $\phi\in\mathcal H_{1}^A(\R^{1+n})$, applying $F'$ to both sides of \eqref{46}  at $\theta=\theta_0$, we obtain
\begin{equation}\label{16}F'\left[\phi\right](t,\theta_0)+e^{-\delta\left\langle x\right\rangle}J(t,\theta_0)Q(t)e^{\delta\left\langle x\right\rangle}F'\left[\phi\right](t,\theta_0)=F'\left[\alpha(t)G(t)\right](t,\theta_0).\end{equation}
Since $G(t)$ is $T$-periodic with respect to $t$, for $0\leq t< T$, we have
\[\begin{aligned}F'\left[\alpha(t)G(t)\right](t,\theta_0)&=\left(e^{-(A+1)t}\sum_{k=0}^{+\infty}\alpha(t+kT)e^{-(A+1)kT}\right)G(t)\\
&=e^{-(A+1)t}\left(\alpha(t)+\frac{e^{-(A+1)T}}{\left(1-e^{-(A+1)T}\right)}\right)G(t).\end{aligned}\]
Let $p_1(t)$ be the $\mathcal C^\infty$ and $T$-periodic function defined by
\[p_1(t)=e^{-(A+1)t}\left(\alpha(t)+\frac{e^{-(A+1)T}}{\left(1-e^{-(A+1)T}\right)}\right),0\leq t<T.\]
Since $\alpha(t)\geq0$, we have $p_1(t)>0$ and according to \eqref{16},
\[g(t)=\frac{F'\left[\phi(t)\right](t,\theta_0)}{p_1(t)}\]
is a  solution of \eqref{14}. Thus, from Proposition \ref{p1}, we deduce that 
\[\textrm{Id}+e^{-\delta\left\langle x\right\rangle}J(t,\theta_0)Q(t)e^{\delta\left\langle x\right\rangle}\] 
is invertible. Then, applying the analytic Fredholm theorem, we obtain that \[F'(e^{-\delta\left\langle x\right\rangle}U(t,0)e^{-\delta\left\langle x\right\rangle})(t,\theta):\ \Bb\to\Bb\] admits the following meromorphic continuation
\[F'\left[e^{-\delta\left\langle x\right\rangle}U(t,0)e^{-\delta\left\langle x\right\rangle}\right](t,\theta)=\left(\textrm{Id}+e^{-\delta\left\langle x\right\rangle}J(t,\theta)Q(t)e^{\delta\left\langle x\right\rangle}\right)^{-1}F'(e^{-\delta\left\langle x\right\rangle}U_0(t)e^{-\delta\left\langle x\right\rangle})(t,\theta).\]
Moreover, applying Theorem 8 of \cite{Vai}, for $n$ even and $\theta\in\{ \theta\in\mathbb C\ :\ \theta\notin2\pi\mathbb Z+i\R^-,\ \abs{\theta}\leq \epsilon_0\}$, we obtain the following representation 
\[F'\left[e^{-\delta\left\langle x\right\rangle}U(t,0)e^{-\delta\left\langle x\right\rangle}\right](t,\theta)=\theta^{-m}\sum_{j\geq0}\left(\frac{\theta}{R_t(\log \theta)}\right)^jP_{j,t}(\log \theta)+C(t,\theta),\]
where $C(t,\theta)$ is analytic with respect to $\theta$, $R_{t}$ is a polynomial, the $P_{j,t}$ are polynomials of order at most $l_j$ and $\log$ is the logarithm defined on $\mathbb{C}\setminus i{\R}^-$. Also, $C(t,\theta)$ and the coefficients of  the polynomials $R_{t}$ and $P_{j,t}$ are $\mathcal C^\infty$ and $T$-periodic with respect to $t$. We conclude by applying \eqref{2} and Lemma \ref{l8}.\qed

\section{Applications to local energy decay and Strichartz estimates} 
In  this section, we  apply the result of Theorem \ref{t3} to prove estimates \eqref{local} and \eqref{1}. We start by showing that assumption $\rm(H3)$ implies the local energy decay \eqref{local}. Then, by combining this result with some arguments of \cite{P2} and \cite{SS}, we establish the global Strichartz estimates \eqref{1}. In Subsection 4.3 we give examples of potential $V(t,x)$ such that $\rm(H3)$ is fulfilled.
\subsection{Local energy decay}

The goal of this subsection is to prove the local energy decay introduced in Theorem \ref{t1}. For this purpose, we need the following.
\begin{lem}\label{l4}
Assume $\rm(H1)$ and $\rm(H2)$ fulfilled. Then, for all $0\leq t\leq T$, 
\[e^{- (D+\epsilon)\left\langle x\right\rangle}U(t,0)e^{ D\left\langle x\right\rangle},\ e^{ D\left\langle x\right\rangle}U(t,0)e^{-(D+\epsilon)\left\langle x\right\rangle}\in\mathcal L(\B)\]
and we have
\begin{equation}\label{102}\norm{e^{- (D+\epsilon)\left\langle x\right\rangle}U(t,0)e^{ D\left\langle x\right\rangle}}_{\mathcal L(\B)}\leq C,\end{equation}
\begin{equation}\label{103}\norm{e^{ D\left\langle x\right\rangle}U(t,0)e^{-(D+\epsilon)\left\langle x\right\rangle}}_{\mathcal L(\B)}\leq C\end{equation}
with $C$ independent of $t$.\end{lem}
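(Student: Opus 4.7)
The strategy is to reduce both estimates to energy inequalities for wave equations obtained by exponential conjugation. For \eqref{102}, set $\rho(x)=(D+\epsilon)\langle x\rangle$ and observe that if $u$ solves $\partial_t^2 u-\Delta u + Vu=0$, then $w:=e^{-\rho}u$ satisfies the perturbed wave equation
\[
\partial_t^2 w - \Delta w - 2\nabla\rho\cdot\nabla w + (V - \Delta\rho - |\nabla\rho|^2)\,w = 0.
\]
Its coefficients are uniformly bounded on $\R^n$: $|\nabla\rho|\leq D+\epsilon$, $|\Delta\rho|$ is bounded by a constant depending only on $n$ and $D+\epsilon$, and $V$ is bounded by (H1).

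The classical energy method then yields
\[
\|(w(t),\partial_t w(t))\|_{\mathcal{H}_1(\R^n)} \leq C e^{KT}\,\|(w(0),\partial_t w(0))\|_{\mathcal{H}_1(\R^n)}, \quad t\in[0,T],
\]
with $K$ depending only on $n$, $D+\epsilon$ and $\|V\|_\infty$. Indeed, differentiating $E(t)=\tfrac12\int(|\partial_t w|^2+|\nabla w|^2+|w|^2)\,dx$ and integrating by parts, the top-order term $\int \partial_t w\,\Delta w$ cancels with $\int \nabla w\cdot\nabla\partial_t w$, while the lower-order contributions involving $\nabla\rho\cdot\nabla w$, $(\Delta\rho+|\nabla\rho|^2)\,w$ and $Vw$ are controlled by $E(t)$ via Cauchy--Schwarz, yielding $E'(t)\leq K E(t)$ and the claimed bound by Gronwall.

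To return to $U(t,0)$, observe that for $f\in\B$ the data $(w,\partial_t w)(0)=e^{-\epsilon\langle x\rangle}f$ lies in $\mathcal{H}_1$ with $\|e^{-\epsilon\langle x\rangle}f\|_{\mathcal{H}_1}\lesssim \|f\|_{\dot{\mathcal{H}}_1}$, since multiplication by $e^{-\epsilon\langle x\rangle}$ maps $\dot H^\gamma(\R^n)\to H^\gamma(\R^n)$ for $\gamma<n/2$ (recalled in the introduction). Hence the conjugated equation is well-posed for such data. For $f$ in the dense subset of compactly supported elements of $\B$, $e^{D\langle x\rangle}f$ lies in $\B$, so $u=U(t,0)[e^{D\langle x\rangle}f]$ is defined in the usual sense and $(w,\partial_t w)(t)=e^{-(D+\epsilon)\langle x\rangle}U(t,0)[e^{D\langle x\rangle}f]$ by uniqueness. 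Combining the continuous embedding $\|\cdot\|_{\B}\leq\|\cdot\|_{\mathcal{H}_1}$ with the energy estimate gives \eqref{102} on a dense subset, and the composite operator extends by density to all of $\B$ with the same bound.

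The bound \eqref{103} follows by the symmetric conjugation $v=e^{D\langle x\rangle}u$ (i.e.\ taking $\rho'=-D\langle x\rangle$): the resulting equation has the same structure with uniformly bounded lower-order coefficients, and the initial data $(v,\partial_t v)(0)=e^{-\epsilon\langle x\rangle}f$ again lies in $\mathcal{H}_1$ with the desired bound. The only real technical subtlety is formal rather than analytic: $U(t,0)e^{D\langle x\rangle}$ is not a priori defined on all of $\B$, which is why one must take the solution $w$ of the conjugated equation as the primary definition of the composite operator and then verify compatibility on the dense subset where $e^{D\langle x\rangle}f$ already lies in $\B$.
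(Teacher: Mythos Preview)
Your argument is correct and takes a genuinely different route from the paper. The paper proves the lemma via Duhamel's formulas \eqref{18}--\eqref{19}: it first invokes the ``well-known'' conjugation bound $e^{\mp(D+\epsilon/2)\langle x\rangle}U_0(t)e^{\pm(D+\epsilon/2)\langle x\rangle}\in\mathcal L(\mathcal H_1)$ for the free propagator, then uses the exponential decay of $V$ from $\rm(H1)$ to show $e^{-(D+\epsilon)\langle x\rangle}U(t,s)Q(s)e^{(D+\epsilon)\langle x\rangle}\in\mathcal L(\dot{\mathcal H}_1)$, and finally bounds the Duhamel integral over $[0,T]$. By contrast, you conjugate the full equation directly and close an $\mathcal H_1$ energy estimate for the resulting wave equation with bounded first- and zeroth-order coefficients, which is more self-contained and uses only $\|V\|_{L^\infty}<\infty$ rather than its spatial decay. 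The paper's approach fits naturally with the Duhamel machinery used elsewhere in the article and cleanly separates the free part from the perturbation; your approach is more elementary, avoids quoting the free conjugation estimate as a black box, and in fact proves a slightly stronger statement (the decay rate of $V$ plays no role). Both lead to the same density argument at the end, and your remark that the conjugated solution $w$ must be taken as the primary definition of the composite operator (with compatibility checked on compactly supported data) is exactly the right way to handle the unbounded factor $e^{D\langle x\rangle}$.
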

\begin{proof}
Let $\phi\in\CI\times\CI$.
Applying the Duhamel's principle we obtain these representations
\begin{equation}\label{18}U(t,0)=U_0(t)-\int_0^tU(t,s)Q(s)U_0(s)\d s,\end{equation}
\begin{equation}\label{19}U(t,0)=U_0(t)-\int_0^tU_0(t-s)Q(s)U(s,0)\d s.\end{equation}
It is well known that $e^{\mp (D+\frac{\epsilon}{2})\left\langle x\right\rangle}U_0(t)e^{\pm \left(D+\frac{\epsilon}{2}\right)\left\langle x\right\rangle}\in\mathcal L(\Bb)$ and
\[C_1=\sup_{t\in [0,T]}\norm{e^{\mp (D+\frac{\epsilon}{2})\left\langle x\right\rangle}U_0(t)e^{\pm (D+\frac{\epsilon}{2})\left\langle x\right\rangle}}_{\mathcal L(\Bb)}<+\infty.\]
Since the multiplication by $e^{- \frac{\epsilon}{2}\left\langle x\right\rangle}$ is continuous from $\B$ to $\Bb$ we have 
\[e^{- (D+\epsilon)\left\langle x\right\rangle}U_0(t)e^{D\left\langle x\right\rangle},\ e^{D\left\langle x\right\rangle}U_0(t)e^{-(D+\epsilon)\left\langle x\right\rangle}\in\mathcal L(\B)\]
and, for all $0\leq t\leq T$, we get
\begin{equation}\label{100}\norm{e^{- (D+\epsilon)\left\langle x\right\rangle}U_0(t)e^{D\left\langle x\right\rangle}}_{\mathcal L(\B)}\leq C_1'\norm{e^{- (D+\frac{\epsilon}{2})\left\langle x\right\rangle}U_0(t)e^{ (D+\frac{\epsilon}{2})\left\langle x\right\rangle}}_{\mathcal L(\Bb)}\leq C_2,\end{equation}
\begin{equation}\label{101}\norm{e^{D\left\langle x\right\rangle}U_0(t)e^{-(D+\epsilon)\left\langle x\right\rangle}}_{\mathcal L(\B)}\leq C_1'\norm{e^{ (D+\frac{\epsilon}{2})\left\langle x\right\rangle}U_0(t)e^{- (D+\frac{\epsilon}{2})\left\langle x\right\rangle}}_{\mathcal L(\Bb)}\leq C_2.\end{equation}
Hence, multiplying the representation \eqref{18} on the right  by $e^{- (D+\epsilon)\left\langle x\right\rangle}$ and on the left by $e^{ D\left\langle x\right\rangle}$ , we obtain
\[\begin{aligned}e^{- (D+\epsilon)\left\langle x\right\rangle}U(t,0)e^{ D\left\langle x\right\rangle}\phi=&e^{ -(D+\epsilon)\left\langle x\right\rangle}U_0(t)e^{ D\left\langle x\right\rangle}\phi\\
\ &-\int_0^t\left(e^{- (D+\epsilon)\left\langle x\right\rangle}U(t,s)Q(s)e^{ (D+\epsilon)\left\langle x\right\rangle}\right)e^{- (D+\epsilon)\left\langle x\right\rangle}U_0(s)e^{ D\left\langle x\right\rangle}\phi\d s.\end{aligned}\]
Then applying (H1), we have $e^{- (D+\epsilon)\left\langle x\right\rangle}U(t,s)Q(s)e^{ (D+\epsilon)\left\langle x\right\rangle}\in\mathcal L(\B)$ and, from \eqref{100} we deduce , for all $ 0\leq t\leq T$, the estimate
\[\norm{e^{- (D+\epsilon)\left\langle x\right\rangle}U(t,0)e^{ D\left\langle x\right\rangle}\phi}_{\B}\leq \left(C_2+C_2\int_0^t\norm{e^{- (D+\epsilon)\left\langle x\right\rangle}U(t,s)Q(s)e^{ (D+\epsilon)\left\langle x\right\rangle}}_{\mathcal L(\B)}\d s\right)\norm{\phi}.\]
It follows
\[\norm{e^{- (D+\epsilon)\left\langle x\right\rangle}U(t,0)e^{ D\left\langle x\right\rangle}\phi}_{\B}\leq \left(C_2+C_3T\right)\norm{\phi}_{\B},\quad 0\leq t\leq T.\]
Thus, by density $e^{- (D+\epsilon)\left\langle x\right\rangle}U(t,0)e^{ D\left\langle x\right\rangle}\in\mathcal L(\B)$ and
\[\norm{e^{- (D+\epsilon)\left\langle x\right\rangle}U(t,0)e^{ D\left\langle x\right\rangle}}_{\mathcal L(\B)}\leq C.\]
Combining \eqref{19} and \eqref{101} with the same arguments, we obtain \[e^{- (D+\epsilon)\left\langle x\right\rangle}U(t,0)e^{ D\left\langle x\right\rangle}\in \mathcal L(\B)\] and
\[\norm{e^{- (D+\epsilon)\left\langle x\right\rangle}U(t,0)e^{ D\left\langle x\right\rangle}}_{\mathcal L(\B)}\leq C_2+C_4T,\quad 0\leq t\leq T.\]
\end{proof}
\ \\
\textit{Proof of Theorem \ref{t1}.} From the results of Theorem \ref{t3} and assumption (H3), by applying the arguments of the proof of Lemma 3 and Lemma 4 of \cite{Ki4}, we obtain
\[\norm{e^{- D\left\langle x\right\rangle}U(kT,0)e^{- D\left\langle x\right\rangle}}_{\mathcal L(\B)}\lesssim p(kT),\quad k\in\mathbb N.\]
Combining this result with Lemma \ref{l4} we obtain \eqref{local} (see the proof of Lemma 3 and Lemma 4 in \cite{Ki4}).\qed

\subsection{Global Strichartz estimates}
The goal of this subsection is to prove Theorem \ref{t2}. For this purpose we need to show the $L^2$-integrability of the local energy (see \cite{B}, \cite{P2} and \cite{Met}) which takes the following form.
\begin{prop}\label{p2} Assume $\rm(H1)$, $\rm(H2)$ and $\rm(H3)$ fulfilled and let $n\geq3$, $0\leq \gamma\leq 1$. Let $f\in\dot{\mathcal{H}}_\gamma({\R}^n)$ and
 $F\in e^{-(\frac{3D}{2}+\epsilon)\left\langle x\right\rangle}L^2_t\left(\R^+,\dot{H}_x^\gamma({\R}^n)\right)$. Then the solution $u$  of \eqref{problem} with $\tau=0$ satisfies the estimate
 \begin{equation}\label{20}\int_0^{+\infty}\norm{\left(e^{-(D+2\epsilon)\left\langle x\right\rangle}u(t), e^{-(D+2\epsilon)\left\langle x\right\rangle}u_t(t)\right)}^2_{\dot{\mathcal{H}}_\gamma({\R}^n)}\d t\lesssim \left(\norm{f}_{\dot{\mathcal{H}}_\gamma({\R}^n)}+\norm{e^{(\frac{3D}{2}+\epsilon)\left\langle x\right\rangle}F}_{L^2_t\left(\R^+,\dot{H}_x^\gamma({\R}^n)\right)}\right)^2\end{equation}
with $C$ only depending on $n$ and $\gamma$.\end{prop}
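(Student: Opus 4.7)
The plan is to split $u$ via Duhamel's formula and treat the resulting pieces separately, using the local energy decay of Theorem~\ref{t1} combined with Young's convolution inequality. First, I would write
\[(u,u_t)(t) = U(t,0)f + \int_0^t U(t,s)(0,F(s))\,ds,\]
denote the two terms by $(u_h,u_{h,t})$ and $(u_i,u_{i,t})$, and prove the analogue of \eqref{20} for each.

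For the inhomogeneous contribution, Minkowski's integral inequality gives
\[\norm{e^{-(D+2\epsilon)\left\langle x\right\rangle}(u_i,u_{i,t})(t)}_{\dot{\mathcal{H}}_\gamma}\leq\int_0^t\norm{e^{-(D+2\epsilon)\left\langle x\right\rangle}U(t,s)(0,F(s))}_{\dot{\mathcal{H}}_\gamma}\,ds.\]
The key manipulation is to factor $e^{-(D+2\epsilon)\left\langle x\right\rangle}=e^{-\epsilon\left\langle x\right\rangle}\cdot e^{-(D+\epsilon)\left\langle x\right\rangle}$ and then to insert $e^{-(D+\epsilon)\left\langle x\right\rangle}e^{(D+\epsilon)\left\langle x\right\rangle}$ between the propagator and the source. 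The outer $e^{-\epsilon\left\langle x\right\rangle}$ is bounded on $\dot{\mathcal{H}}_\gamma$ (through $\mathcal{H}_\gamma$), while Theorem~\ref{t1} gives $\norm{e^{-(D+\epsilon)\left\langle x\right\rangle}U(t,s)e^{-(D+\epsilon)\left\langle x\right\rangle}}\lesssim p(t-s)$. Writing $e^{(D+\epsilon)\left\langle x\right\rangle}=e^{-D\left\langle x\right\rangle/2}\cdot e^{(3D/2+\epsilon)\left\langle x\right\rangle}$ and combining the Sobolev embedding $\dot{H}^\gamma\hookrightarrow L^{2n/(n-2\gamma)}$ with the Hardy--Littlewood--Sobolev inclusion $L^{2n/(n+2-2\gamma)}\hookrightarrow\dot{H}^{\gamma-1}$ yields the pointwise bound $\norm{e^{(D+\epsilon)\left\langle x\right\rangle}F(s)}_{\dot{H}^{\gamma-1}}\lesssim\norm{e^{(3D/2+\epsilon)\left\langle x\right\rangle}F(s)}_{\dot{H}^\gamma}$. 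Young's convolution with $p\in L^1(\R^+)$ then closes this part of the estimate.

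For the homogeneous contribution I would use a $TT^*$ argument. Define $T:\dot{\mathcal{H}}_\gamma\to L^2(\R^+;\dot{\mathcal{H}}_\gamma)$ by $Tf(t)=e^{-(D+2\epsilon)\left\langle x\right\rangle}U(t,0)f$; the desired bound is equivalent to the $L^2$-boundedness of the self-adjoint operator $TT^*$, whose integral kernel is
\[K(t,s) = e^{-(D+2\epsilon)\left\langle x\right\rangle}U(t,0)U(s,0)^*e^{-(D+2\epsilon)\left\langle x\right\rangle}.\]
Since the wave equation with real-valued potential is time-reversible, $U(t,0)U(s,0)^*$ coincides with the two-sided propagator $U(t,s)$, and by applying the change of variables $t\mapsto -t$ and exploiting the periodicity $\rm(H2)$, Theorem~\ref{t1} extends to $\norm{e^{-(D+\epsilon)\left\langle x\right\rangle}U(t,s)e^{-(D+\epsilon)\left\langle x\right\rangle}}\lesssim p(\abs{t-s})$ for all $t,s\in\R$. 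The same factorization as above produces $\norm{K(t,s)}\lesssim p(\abs{t-s})$, and Young's inequality yields the $L^2\to L^2$ bound on $TT^*$. Combined with the inhomogeneous estimate, this gives \eqref{20}.

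The main obstacle is the $TT^*$ step: identifying $U(t,0)^*$ with the backward propagator requires an appropriate inner product on $\dot{\mathcal{H}}_\gamma$ (the natural energy form for $\gamma=1$, more delicate otherwise), and extending Theorem~\ref{t1} to the backward direction requires revisiting the meromorphic continuation arguments of Sections~1--3 with $U_0(t)$ replaced by $U_0(-t)$. Both steps hinge on the symmetry of $\rm(H2)$ and $\rm(H3)$ under time reversal. A secondary technical subtlety is the $\dot{H}^\gamma\to\dot{H}^{\gamma-1}$ embedding used for the forcing term $F$, which does not hold generically in the homogeneous scale but is recovered from the Sobolev and Hardy--Littlewood--Sobolev inclusions once the exponential weight is exploited.
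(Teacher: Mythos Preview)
Your treatment of the inhomogeneous piece is essentially the paper's argument: factor the weights, apply Theorem~\ref{t1}, and close with Young's inequality. The paper handles the embedding $\dot H^\gamma\to\dot H^{\gamma-1}$ after multiplication by an exponential weight slightly differently (via the chain $e^{-\frac{D}{4}\left\langle x\right\rangle}\dot{\mathcal H}_{\gamma+1}\hookrightarrow\mathcal H_1\hookrightarrow\dot{\mathcal H}_1$), but your Sobolev/HLS route is equivalent.

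The homogeneous part is where your proposal diverges from the paper, and where it has a genuine gap. The identification $U(t,0)U(s,0)^*=U(t,s)$ fails: time reversibility gives $U(s,0)^{-1}=U(0,s)$, but $U(s,0)$ is \emph{not} unitary on $\dot{\mathcal H}_\gamma$ once $V\neq0$ (only the perturbed energy $\|\nabla u\|^2+\|u_t\|^2+\int Vu^2$ is conserved, not the free one), so the adjoint and the inverse do not coincide. You flag this as an ``obstacle'' but offer no resolution; fixing it would require either a non-standard inner product in which $U$ becomes unitary (delicate since $V$ is time-dependent and may be sign-indefinite) or a direct analysis of the adjoint propagator, which is a different equation. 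On top of that, you need Theorem~\ref{t1} for the backward flow, and it is not clear that (H3) --- a statement about the poles of the \emph{forward} Floquet resolvent --- transfers to the time-reversed problem without redoing Sections~1--3.

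The paper avoids all of this by never invoking $TT^*$. Instead it uses Duhamel in the form
\[U(t,0)=U_0(t)-\int_0^tU(t,s)Q(s)U_0(s)\,ds,\]
so that the homogeneous piece splits into (i) the free flow $e^{-(D+2\epsilon)\left\langle x\right\rangle}U_0(t)f$, controlled in $L^2_t$ by the Smith--Sogge local smoothing estimate (Lemma~\ref{l5}, valid for any Schwartz weight), and (ii) a convolution term that is handled exactly like your inhomogeneous piece, with the source $e^{(D+\epsilon)\left\langle x\right\rangle}Q(s)U_0(s)f$ in $L^2_t$ again by Smith--Sogge and (H1). This perturbative route uses only the forward decay of Theorem~\ref{t1} and needs no adjoint.
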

In fact estimate \eqref{20} is the main point in the proof of Theorem \ref{t2}. Combining \eqref{20} with some arguments of \cite{P2}, including applications of the Kriest-Chisliev and the Duhamel's principle, we establish estimates \eqref{1}. We start with the proof of Proposition \ref{p2}.

\ \\
\textit{Proof of Proposition \ref{p2}.}
First notice that for the free wave equation, $f\in\dot{\mathcal{H}}_\gamma({\R}^n)$ and $r>0$ we have
\begin{equation}\label{21}\int_\R\norm{e^{-r\left\langle x\right\rangle}U_0(t)f}^2_{\mathcal{H}_\gamma({\R}^n)}\d t\lesssim\norm{f}^2_{\dot{\mathcal{H}}_\gamma({\R}^n)}.\end{equation}
To obtain this estimate for $0\leq \gamma\leq 1$ we can apply a result of Smith and Sogge.
\begin{lem}\label{l5}\emph{(\cite{SS},  Lemma 2.2)}
 Let $\gamma\leq \frac{n-1}{2}$ and let  $\phi\in \mathcal S(\R^n)$. Then
\begin{equation}\label{22}\int_{\R}\Vert \phi e^{\pm it\Lambda}f\Vert_{H^\gamma({\R}^n)}^2\textrm{d}t\leq C(\phi,n,\gamma)\Vert f\Vert_{\dot{H}^\gamma({\R}^n)}^2.\end{equation}
\end{lem}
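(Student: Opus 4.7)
The plan is to compute the left-hand side of \eqref{22} by Plancherel in both the time and space variables, and to reduce it to a Schur-type estimate on the unit sphere $\mathbb{S}^{n-1}$ that exploits the Schwartz decay of $\hat\phi$.

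First, I would write $u(t,x) = e^{\pm it\Lambda}f(x)$ and observe that its space-time Fourier transform is the measure $C\hat f(\xi)\delta(\tau\mp|\xi|)$, supported on the positive light cone. Since multiplication by $\phi$ in $x$ is convolution with $\hat\phi$ in frequency, Plancherel in $t$ together with a passage to polar coordinates $\xi=\tau\omega$ yields
\[\int_\R \norm{\phi u(t,\cdot)}_{H^\gamma(\R^n)}^2\textrm{d}t = C\int_0^{+\infty}\tau^{2(n-1)}\int_{\R^n}(1+|\eta|^2)^\gamma\left|\int_{\mathbb{S}^{n-1}}\hat\phi(\eta-\tau\omega)\hat f(\tau\omega)\textrm{d}\omega\right|^2 \textrm{d}\eta\,\textrm{d}\tau.\]
I would then transfer the weight $(1+|\eta|^2)^\gamma$ onto the $\hat\phi$ factor via Peetre's inequality, producing a new Schwartz-class $\hat\phi_\gamma(\eta):=(1+|\eta|^2)^{|\gamma|/2}\hat\phi(\eta)$ and an overall factor $(1+\tau^2)^\gamma$.

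Next, I would expand the squared modulus, integrate in $\eta$, and identify the resulting $\eta$-integral as $\widehat{|\phi_\gamma|^2}(\tau(\omega_1-\omega_2))$. Since $|\phi_\gamma|^2\in\mathcal{S}(\R^n)$, this kernel is pointwise dominated by $C_N(1+\tau|\omega_1-\omega_2|)^{-N}$ for any $N$. A direct computation in spherical coordinates with $\omega_1$ as pole gives, for $N>n-1$,
\[\sup_{\omega_1\in\mathbb{S}^{n-1}}\int_{\mathbb{S}^{n-1}}(1+\tau|\omega_1-\omega_2|)^{-N}\textrm{d}\omega_2 \lesssim (1+\tau)^{-(n-1)},\]
so Schur's test collapses the double-sphere integral to $(1+\tau)^{-(n-1)}\int_{\mathbb{S}^{n-1}}|\hat f(\tau\omega)|^2\textrm{d}\omega$. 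Substituting back and rewriting in Cartesian variables $\xi=\tau\omega$ turns the estimate into
\[\int_\R\norm{\phi u}_{H^\gamma(\R^n)}^2\textrm{d}t \lesssim \int_{\R^n}\frac{(1+|\xi|^2)^\gamma|\xi|^{n-1}}{(1+|\xi|)^{n-1}}|\hat f(\xi)|^2\textrm{d}\xi.\]

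Finally, the weight in the last integral is $\lesssim|\xi|^{2\gamma}$ for $|\xi|\geq 1$ and $\lesssim|\xi|^{n-1}$ for $|\xi|\leq 1$, hence is controlled by $|\xi|^{2\gamma}$ throughout $\R^n$ precisely when $\gamma\leq(n-1)/2$, which is the standing hypothesis; this yields the desired bound $C\norm{f}_{\dot{H}^\gamma(\R^n)}^2$. The main obstacle is exactly this low-frequency analysis: $\norm{f}_{\dot{H}^\gamma}$ does not control $\norm{f}_{L^2}$ when $\gamma>0$, so one must extract the geometric gain $|\xi|^{n-1}$ coming from the codimension-one restriction to the spheres $\{|\xi|=\tau\}$, and this gain matches the homogeneous weight $|\xi|^{2\gamma}$ only under the sharp threshold $\gamma\leq(n-1)/2$.
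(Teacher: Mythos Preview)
Your proposal is correct and is essentially the Smith--Sogge argument that the paper invokes. The paper does not give its own proof of this lemma: it quotes the identity
\[
\int_{\R}\Vert \phi e^{\pm it\Lambda}f\Vert^2_{H^\gamma(\R^n)}\,\d t=\int_0^{+\infty}\int_{\R^n}(1+|\xi|^2)^{\gamma}\left|\int_{\R^n} \hat{\phi}(\xi-\eta)\hat{f}(\eta)\delta(\tau-|\eta|)\,\d\eta\right|^2\d\xi\,\d\tau
\]
from \cite{SS} and observes that the subsequent estimates there use only that $\hat\phi\in\mathcal S(\R^n)$, so the conclusion extends from $\phi\in\CI$ to $\phi\in\mathcal S(\R^n)$ and to even dimensions. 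Your Plancherel-plus-polar-coordinates reduction, the Peetre transfer of the $(1+|\eta|^2)^\gamma$ weight onto $\hat\phi$, and the Schur bound $\sup_{\omega_1}\int_{\mathbb S^{n-1}}(1+\tau|\omega_1-\omega_2|)^{-N}\d\omega_2\lesssim(1+\tau)^{-(n-1)}$ are exactly the ingredients of that argument, and your identification of the threshold $\gamma\leq(n-1)/2$ as the condition under which the low-frequency weight $|\xi|^{n-1}$ is dominated by $|\xi|^{2\gamma}$ is the correct explanation of where the hypothesis enters.
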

In \cite{SS} the authors consider only odd dimensions $n\geq3$ and $\phi\in\CI$, but the proof of this lemma goes without any change for even dimensions.
Moreover, to prove \eqref{22} for $\phi\in\CI$, \cite{SS} consider the following 
 \[\int_{\R}\Vert \phi e^{\pm it\Lambda}f\Vert^2_{H^\gamma({\R}^n)}\d t=\int_0^{+\infty}\int_{\R^n}\left(1+\abs{\xi}^2\right)^{\gamma}\abs{\int_{\R^n} \hat{\phi}(\xi-\eta)\hat{f}(\eta)\delta(\tau-\abs{\eta})\d\eta}^2\d\xi\ \d\tau\]
and they only exploit the fact that $\hat{\phi}\in \mathcal S(\R^n)$. Thus, \eqref{22} is still true for $\phi\in \mathcal S(\R^n)$.  Setting $(u_0(t),\partial_t(u_0)(t))=U_0(t)f$ we have the
representation
\[u_0(t)=\cos(t\Lambda)f_1+\frac{\sin(t\Lambda)}{\Lambda}f_2\]
and, since $e^{-r\left\langle x\right\rangle}\in \mathcal S(\R^n)$, \eqref{21} follows immediately. 

Passing to the estimate of $e^{-(D+2\epsilon)\left\langle x\right\rangle}U(t,0)f$, we obtain from \eqref{18} that
\[\begin{aligned}\norm{e^{-(D+2\epsilon)\left\langle x\right\rangle}U(t,0)f}_{\dot{\mathcal{H}}_\gamma({\R}^n)}\leq& \norm{e^{-(D+2\epsilon)\left\langle x\right\rangle}U_0(t)f}_{\dot{\mathcal{H}}_\gamma({\R}^n)}\\
&+\norm{\int_0^te^{-(D+2\epsilon)\left\langle x\right\rangle}U(t,s)e^{-(D+\epsilon)\left\langle x\right\rangle}\left(e^{(D+\epsilon)\left\langle x\right\rangle}Q(s)U_0(s)f \right)\d s}_{\dot{\mathcal{H}}_\gamma({\R}^n)}.\end{aligned}\]
Notice that for $0\leq \gamma\leq1$
\[\begin{array}{l}\norm{\int_0^te^{-(D+2\epsilon)\left\langle x\right\rangle}U(t,s)e^{-(D+\epsilon)\left\langle x\right\rangle}\left(e^{(D+\epsilon)\left\langle x\right\rangle}Q(s)U_0(s)f \right)\d s}_{\dot{\mathcal{H}}_\gamma({\R}^n)}\\
\leq\norm{e^{-\epsilon\left\langle x\right\rangle}\int_0^te^{-(D+\epsilon)\left\langle x\right\rangle}U(t,s)e^{-(D+\epsilon)\left\langle x\right\rangle}\left(e^{(D+\epsilon)\left\langle x\right\rangle}Q(s)U_0(s)f \right)\d s}_{\Bb}.\end{array}\]
Since the multiplication by $e^{-\epsilon\left\langle x\right\rangle}$ is continuous from $\B$ to $\Bb$, it follows
\[\begin{array}{l}\norm{\int_0^te^{-(D+2\epsilon)\left\langle x\right\rangle}U(t,s)e^{-(D+\epsilon)\left\langle x\right\rangle}\left(e^{(D+\epsilon)\left\langle x\right\rangle}Q(s)U_0(s)f \right)\d s}_{\dot{\mathcal{H}}_\gamma({\R}^n)}\\
\lesssim \norm{\int_0^te^{-(D+\epsilon)\left\langle x\right\rangle}U(t,s)e^{-(D+\epsilon)\left\langle x\right\rangle}\left(e^{(D+\epsilon)\left\langle x\right\rangle}Q(s)U_0(s)f \right)\d s}_{\B}.\end{array}\]
Thus, we obtain
\[\begin{array}{l}\norm{\int_0^te^{-(D+2\epsilon)\left\langle x\right\rangle}U(t,s)e^{-(D+\epsilon)\left\langle x\right\rangle}\left(e^{(D+\epsilon)\left\langle x\right\rangle}Q(s)U_0(s)f \right)\d s}_{\dot{\mathcal{H}}_\gamma({\R}^n)}\\
\ \\
\lesssim \int_0^t\norm{e^{-(D+\epsilon)\left\langle x\right\rangle}U(t,s)e^{-(D+\epsilon)\left\langle x\right\rangle}}_{\mathcal L(\B)}\norm{\left(e^{(D+\epsilon)\left\langle x\right\rangle}Q(s)U_0(s)f \right)}_{\B}\d s\end{array}\]
and applying Theorem \ref{t1} it follows
\[\begin{array}{l}\norm{\int_0^te^{-(D+2\epsilon)\left\langle x\right\rangle}U(t,s)e^{-(D+\epsilon)\left\langle x\right\rangle}\left(e^{(D+\epsilon)\left\langle x\right\rangle}Q(s)U_0(s)f \right)\d s}_{\dot{\mathcal{H}}_\gamma({\R}^n)}\\
\ \\
\lesssim\left(p(t)\mathds{1}_{\R^+}(t)\right)*\left(\mathds{1}_{\R^+}(t)\norm{\left(e^{(D+\epsilon)\left\langle x\right\rangle}Q(t)U_0(t)f \right)}_{\mathcal{H}_{\gamma+1}({\R}^n)}\right)(t),\quad t>0.\end{array}\]
It is clear that \eqref{21} and (H1) imply
\begin{equation}\label{23}\int_0^{+\infty}\norm{e^{(D+\epsilon)\left\langle x\right\rangle}Q(t)U_0(t)f }^2_{\mathcal{H}_{\gamma+1}({\R}^n)} \d t=\int_0^{+\infty}\norm{e^{(D+\epsilon)\left\langle x\right\rangle}V(t,x)u_0(t)}^2_{H^\gamma(\R^n)}\d t\lesssim\norm{f}^2_{\dot{\mathcal{H}}_\gamma({\R}^n)}.\end{equation}
Since $p(t)\mathds{1}_{\R^+}(t)\in L^1\left(\R\right)$, an application of the Young inequality for the convolution, combined
with  \eqref{21} and \eqref{23}, yields \eqref{20} with $F=0$. 

In the general case ($F\neq0$) consider the solution $v$ of \eqref{problem} with $\tau=0$, $f=(0,0)$ and\\ $F\in e^{-(\frac{3D}{2}+2\epsilon)\left\langle x\right\rangle}L^2_t\left(\R,\dot{H}_x^\gamma({\R}^n)\right)$. Then 
\[\left( e^{-(D+2\epsilon)\left\langle x\right\rangle}v(t), e^{-(D+2\epsilon)\left\langle x\right\rangle}v_t(t)\right)=\int_0^t e^{-(D+2\epsilon)\left\langle x\right\rangle}U(t,s) e^{-(D+\epsilon)\left\langle x\right\rangle}\left(0, e^{(D+\epsilon)\left\langle x\right\rangle}F(s)\right)\d s.\]
Clearly, we have 
\[\left(0, e^{(D+\epsilon)\left\langle x\right\rangle}F(t)\right)\in L^2_t\left(\R^+,e^{-\frac{D}{4}\left\langle x\right\rangle}\dot{\mathcal{H}}_{\gamma+1}({\R}^n)\right)\hookrightarrow L^2_t\left(\R^+,\Bb\right)\hookrightarrow L^2_t\left(\R^+,\B\right).\]
Exploiting the local energy decay of Theorem \ref{t1} and repeating the above arguments we get for $u=v$ estimate \eqref{20} with $f=(0,0)$. This completes the proof.\qed
\vspace{0,5cm}

Now let us return to Theorem \ref{t2}. In order to show \eqref{1} we need these two results.
\begin{prop}\label{p3} Let $n\geq3$  and let $1\leq \tilde{p},\tilde{q}\leq2$ and $0\leq \gamma\leq  1$ satisfy \eqref{1} with $1<\tilde{p}$ and $\tilde{q}'<\frac{2(n-1)}{n-3}$. Let $f\in\dot{\mathcal{H}}_{\gamma}({\R}^n)$, $F\in L^{\tilde{p}}_t(\R,L^{\tilde{q}}_x(\R^n))$ and let $u$ be the solution of \eqref{problem} with $\tau=0$ and $V=0$. Then, for all $r>0$, we have
\begin{equation}\label{29}\int_\R\norm{\left(e^{-r\left\langle x\right\rangle}u(t), e^{-r\left\langle x\right\rangle}u_t(t)\right)}^2_{\dot{\mathcal{H}}_\gamma({\R}^n)}\d t\lesssim\left(\norm{f}_{\dot{\mathcal{H}}_\gamma({\R}^n)}+\norm{F}_{L^{\tilde{p}}_t(\R,L^{\tilde{q}}_x(\R^n))}\right)^2.\end{equation}
\end{prop}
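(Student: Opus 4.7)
The plan is to split $u=u_0+v$, where $u_0(t)=U_0(t)f$ is the homogeneous solution and
\[v(t)=\int_0^t\frac{\sin((t-s)\Lambda)}{\Lambda}F(s)\,\d s\]
is the Duhamel contribution of the source, and to bound each summand separately on the left-hand side of \eqref{29}. For the homogeneous piece $u_0$, the required bound is essentially \eqref{21}: writing $u_0(t)=\cos(t\Lambda)f_1+\sin(t\Lambda)\Lambda^{-1}f_2$ and using that $e^{-r\left\langle x\right\rangle}\in\mathcal S(\R^n)$, an application of Lemma \ref{l5} (i.e.\ \cite[Lemma 2.2]{SS}) furnishes
\[\int_\R\norm{(e^{-r\left\langle x\right\rangle}u_0(t),e^{-r\left\langle x\right\rangle}\partial_t u_0(t))}_{\dot{\mathcal{H}}_\gamma(\R^n)}^2\,\d t\lesssim\norm{f}_{\dot{\mathcal{H}}_\gamma(\R^n)}^2,\]
exactly as in the derivation of \eqref{21} already used inside the proof of Proposition \ref{p2}.

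For the inhomogeneous piece $v$, the target is
\[\norm{(e^{-r\left\langle x\right\rangle}v,e^{-r\left\langle x\right\rangle}v_t)}_{L^2_t\dot{\mathcal{H}}_\gamma(\R^n)}\lesssim \norm{F}_{L^{\tilde p}_t(\R,L^{\tilde q}_x(\R^n))}.\]
I would establish this by combining two free-wave estimates via a $TT^\ast$-type duality. First, \eqref{21} says that the operator $T_1:\dot{\mathcal{H}}_\gamma(\R^n)\to L^2_t\dot{\mathcal{H}}_\gamma(\R^n)$ defined by $T_1 g=e^{-r\left\langle x\right\rangle}U_0(\cdot)g$ is bounded. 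Second, the free-wave Strichartz estimates of \cite{KT} and \cite{LS} recalled after Theorem \ref{t2}, together with the scaling/admissibility relations in \eqref{2}, imply by duality that
\[T_2^\ast F=\int_\R U_0(-s)(0,F(s))\,\d s\]
is bounded from $L^{\tilde p}_t(\R,L^{\tilde q}_x(\R^n))$ into $\dot{\mathcal{H}}_\gamma(\R^n)$, since this is precisely the dual Strichartz estimate on the source side. Composing the two then gives the full-line bound
\[\left\|e^{-r\left\langle x\right\rangle}\int_{\R}U_0(t-s)(0,F(s))\,\d s\right\|_{L^2_t\dot{\mathcal{H}}_\gamma(\R^n)}\lesssim\norm{F}_{L^{\tilde p}_t(\R,L^{\tilde q}_x(\R^n))}.\]
Finally, to convert this full-line estimate into the retarded-time estimate for $v$, I would invoke the Christ--Kiselev lemma (as already used in the proof of Theorem \ref{t2}), which is legitimate thanks to the strict inequality $\tilde p<2$ provided by the scaling relations in \eqref{2} combined with the standing assumption $p>2$.

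The main obstacle will be the bookkeeping of the $TT^\ast$/duality step: one must verify that \eqref{2} is exactly the relation under which the dual free-wave Strichartz estimate sends $(0,F)$ into $\dot{\mathcal{H}}_\gamma(\R^n)$, and that the Christ--Kiselev lemma applies cleanly to the mixed-norm pair involving $L^2_t$ on the target side and $L^{\tilde p}_t$ on the source side. All other ingredients are already in the paper: the local smoothing estimate \eqref{21} together with Lemma \ref{l5}, the multiplication continuity $\dot H^\gamma\hookrightarrow H^\gamma$ via $e^{-r\left\langle x\right\rangle}\in\mathcal S(\R^n)$, and the classical Strichartz theory for the free wave equation taken from \cite{KT}, \cite{LS}.
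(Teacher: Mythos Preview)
Your proposal is correct and is precisely the approach the paper indicates: its one-line proof reads ``We prove Proposition \ref{p3} by combining Lemma \ref{l5} (for $\phi=e^{-r\langle x\rangle}$) with the arguments used in Proposition 2 of \cite{P2},'' and you have faithfully unpacked those arguments (Smith--Sogge local smoothing for the homogeneous part, dual free Strichartz plus Christ--Kiselev for the Duhamel part). One small correction: the strict inequality $\tilde p<2$ needed for Christ--Kiselev is not a consequence of any hypothesis on $p$ (there is no $p$ in Proposition \ref{p3}); it follows instead from the admissibility condition $\frac{1}{\tilde p'}\le\frac{(n-1)(\tilde q'-2)}{4\tilde q'}$ in \eqref{2} together with the hypothesis $\tilde q'<\frac{2(n-1)}{n-3}$, which force $\tilde p'>2$.
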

We prove Proposition \ref{p3} by combining Lemma \ref{l5} (for $\phi=e^{-r\left\langle x\right\rangle}$) with the arguments used in Proposition 2 of \cite{P2}.

\begin{prop}\label{p4} Assume $\rm(H1)$, $\rm(H2)$ and $\rm(H3)$ fulfilled. Let $n\geq3$  and let $1\leq \tilde{p},\tilde{q}\leq2$ and $0\leq \gamma\leq  1$ satisfy \eqref{1} with $1<\tilde{p}$ and $\tilde{q}'<\frac{2(n-1)}{n-3}$. Let $f\in\dot{\mathcal{H}}_{\gamma}({\R}^n)$, $F\in L^{\tilde{p}}_t(\R,L^{\tilde{q}}_x(\R^n))$ and let $u$ be the solution of \eqref{problem} with $\tau=0$. Then, we have
\begin{equation}\label{30}\int_0^{+\infty}\norm{\left(e^{-(D+2\epsilon)\left\langle x\right\rangle}u(t), e^{-(D+2\epsilon)\left\langle x\right\rangle}u_t(t)\right)}^2_{\dot{\mathcal{H}}_\gamma({\R}^n)}\d t\lesssim \left(\norm{f}_{\dot{\mathcal{H}}_\gamma({\R}^n)}+\norm{F}_{L^{\tilde{p}}_t(\R,L^{\tilde{q}}_x(\R^n))}\right)^2.\end{equation}
\end{prop}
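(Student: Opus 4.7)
The plan is to derive \eqref{30} from Propositions~\ref{p2} and~\ref{p3} by a Duhamel decomposition that isolates the effect of the potential. First I would split $u=w+v$, where $w$ solves the free Cauchy problem $\partial_t^2w-\Delta w=F$ with $(w,w_t)_{|t=0}=f$, and $v=u-w$ solves $\partial_t^2v-\Delta v+V(t,x)v=-V(t,x)w$ with zero initial data at $t=0$. The contribution of $w$ to the left-hand side of \eqref{30} is immediate: Proposition~\ref{p3} applied with the weight $r=D+2\epsilon$ yields
\[
\int_0^{+\infty}\|(e^{-(D+2\epsilon)\langle x\rangle}w,\,e^{-(D+2\epsilon)\langle x\rangle}w_t)\|^2_{\dot{\mathcal H}_\gamma}\,dt\lesssim\bigl(\|f\|_{\dot{\mathcal H}_\gamma}+\|F\|_{L^{\tilde p}_t L^{\tilde q}_x}\bigr)^2.
\]

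For the remainder $v$, I would invoke Proposition~\ref{p2} with zero initial data and forcing $-Vw$, which requires checking that $Vw\in e^{-(\frac{3D}{2}+\epsilon)\langle x\rangle}L^2_t(\R^+,\dot H^\gamma_x)$ with norm controlled by $\|f\|_{\dot{\mathcal H}_\gamma}+\|F\|_{L^{\tilde p}_t L^{\tilde q}_x}$. Setting $\widetilde V(t,x)=e^{(\frac{3D}{2}+\epsilon)\langle x\rangle}V(t,x)$, hypothesis $\rm(H1)$ gives that $\widetilde V$ and all its $x$-derivatives decay at least like $e^{-(\frac{D}{2}+2\epsilon)\langle x\rangle}$ uniformly in $t$. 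Combining this with the embedding $H^\gamma\hookrightarrow\dot H^\gamma$ and the property (recalled in the introduction) that, for $\gamma<n/2$, multiplication by $e^{-s\langle x\rangle}$ maps $\dot H^\gamma$ continuously into $H^\gamma$, I would obtain, for a convenient fixed $r\in(0,D/4)$,
\[
\|\widetilde V(t,\cdot)\,w(t,\cdot)\|_{\dot H^\gamma}\lesssim\|e^{-r\langle x\rangle}w(t,\cdot)\|_{\dot H^\gamma}.
\]
A second application of Proposition~\ref{p3}, now with weight $r$, then produces the desired $L^2_t$-bound
\[
\|e^{(\frac{3D}{2}+\epsilon)\langle x\rangle}Vw\|^2_{L^2_t(\R^+,\dot H^\gamma_x)}\lesssim\bigl(\|f\|_{\dot{\mathcal H}_\gamma}+\|F\|_{L^{\tilde p}_t L^{\tilde q}_x}\bigr)^2,
\]
after which Proposition~\ref{p2} furnishes the analogous estimate for $v$. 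Summing the $w$ and $v$ contributions gives \eqref{30}.

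The main technical point I anticipate is the multiplier inequality in the middle paragraph: the weighted potential $\widetilde V(t,\cdot)$ must act as a bounded multiplier on $\dot H^\gamma$ for the entire range $0\leq\gamma\leq1$, absorbing enough decay to reach the weight $e^{-r\langle x\rangle}$. For $\gamma\in\{0,1\}$ this is a direct Leibniz and $L^\infty$-multiplier computation using the Schwartz-type bounds on $\widetilde V$; the intermediate range follows by complex interpolation between these two endpoint estimates. Given the very strong decay afforded by $\rm(H1)$, no input beyond Propositions~\ref{p2}--\ref{p3} and the Duhamel identity should be required.
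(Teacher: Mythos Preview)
Your proposal is correct and follows essentially the same route the paper indicates: the paper's one-line proof simply says to combine \eqref{20} (Proposition~\ref{p2}) with Proposition~\ref{p3}, and your Duhamel splitting $u=w+v$ with $w$ the free solution and $v$ solving the perturbed equation with forcing $-Vw$ is precisely the decomposition the paper itself spells out a few lines later in the proof of Theorem~\ref{t2}. Your handling of the multiplier bound on $\widetilde V=e^{(\frac{3D}{2}+\epsilon)\langle x\rangle}V$ via the endpoint cases $\gamma\in\{0,1\}$ and interpolation is the natural way to justify the step the paper leaves implicit.
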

Combining \eqref{20} with  Proposition \ref{p3}  (see also the proof of Corollary 2 in \cite{P2}), we prove easily \eqref{30}. \\
\ \\
\textit{Proof of Theorem \ref{t2}.}
We present the solution of \eqref{problem} as a sum $u=u_0+v$, where $u_0$ is the solution of
\[  \left\{\begin{aligned}
\partial_t^2u_0-\Delta_{x}u_0&=F(t,x),\ \ t>0,\ \ x\in{\R}^{n},\\
(u_0,\partial_tu_0)(0,x)&=(f_{1}(x),f_{2}(x))=f(x),\ \ x\in{\R}^n,\end{aligned}\right.\]
while $v$ is the solution of the problem
\[  \left\{\begin{aligned}
\partial_t^2v-\Delta_{x}v+V(t,x)v&=-Vu_0,\ \ t>0,\ \ x\in{\R}^{n},\\
(v,v_{t})(0,x)&=(0,0),\ \ x\in{\R}^n.\end{aligned}\right.\]
It  follows from \cite{KT}, that  for $u=u_0$ estimates \eqref{1} hold. It remains to estimate $v$. Next, we have
\[v(t)=-\int_0^t\frac{\sin((t-s)\Lambda)}{\Lambda}\left(V(s,\cdot)u_0(s)+V(s,\cdot)v(s)\right)\d s.\]
For $e^{\epsilon\left\langle x\right\rangle}Vu_0$ we have \eqref{29}. Moreover, (H1), \eqref{20} and \eqref{29} imply
\begin{equation}\label{31}\norm{e^{\epsilon\left\langle x\right\rangle}Vv}_{L_t^2\left(\R^+,\dot{{H}}^{\gamma}({\R}^n)\right)}\lesssim  \norm{e^{(\frac{3D}{2}+2\epsilon)\left\langle x\right\rangle}Vu_0}_{L_t^2\left(\R^+,\dot{{H}}^{\gamma}({\R}^n)\right)}
\lesssim\left(\norm{f}_{\dot{\mathcal{H}}_\gamma({\R}^n)}+\norm{F}_{L^{\tilde{p}}_t(\R,L^{\tilde{q}}_x(\R^n))}\right).\end{equation}
Then we have 
\begin{equation}\label{32}\norm{e^{\epsilon\left\langle x\right\rangle}\left(Vu_0+Vv\right)}_{L_t^2\left(\R^+,\dot{{H}}^{\gamma}({\R}^n)\right)}\lesssim\left(\norm{f}_{\dot{\mathcal{H}}_\gamma({\R}^n)}+\norm{F}_{L^{\tilde{p}}_t(\R,L^{\tilde{q}}_x(\R^n))}\right).\end{equation}
An application of Lemma \ref{l5} (with $\phi=e^{\epsilon\left\langle x\right\rangle}$) shows that the operator
\[S:\dot{H}^{-\gamma}(\R^n)\ni g\longmapsto e^{-\epsilon\left\langle x\right\rangle}e^{\pm it\Lambda}g\in L^2(\R^+,\dot{H}^{-\gamma}(\R^n))\]
is bounded. The adjoint operator
\[(S^*(G))(x)=\int_0^{+\infty} e^{\pm is\Lambda}e^{-\epsilon\left\langle x\right\rangle}G(s,x)\d s\]
is bounded as an operator from $L^2(\R^+,\dot{H}^{\gamma}(\R^n))$ to $\dot{H}^{\gamma}(\R^n)$ and this yields
\[\norm{\int_0^{+\infty} e^{\pm is\Lambda}e^{-\epsilon\left\langle x\right\rangle}h(s)}_{\dot{H}^{\gamma}(\R^n)}\lesssim\norm{h}_{L^2(\R^+,\dot{H}^{\gamma}(\R^n))}.\]
Combining this result with the arguments exploited in the last section of \cite{P2}, we obtain
\begin{equation}\label{33}\norm{\int_0^t\frac{\sin((t-s)\Lambda)}{\Lambda}\left(V(s,\cdot)u_0(s)+V(s,\cdot)v(s)\right)\d s}_{L^p_t(\R^+,L_x^q(\R^n))}\lesssim\norm{e^{\epsilon\left\langle x\right\rangle}\left(Vu_0+Vv\right)}_{L_t^2\left(\R^+,\dot{{H}}^{\gamma}({\R}^n)\right)}\end{equation}
and
\begin{equation}\label{34}\norm{\int_0^t\frac{\sin((t-s)\Lambda)}{\Lambda}\left(V(s,\cdot)u_0(s)+V(s,\cdot)v(s)\right)\d s}_{\dot{H}^{\gamma}(\R^n)}\lesssim\norm{e^{\epsilon\left\langle x\right\rangle}\left(Vu_0+Vv\right)}_{L_t^2\left(\R^+,\dot{{H}}^{\gamma}({\R}^n)\right)}.\end{equation}
Combining \eqref{32}, \eqref{33} and \eqref{34} we deduce that for $u=v$ estimates \eqref{1} hold. This completes the proof of Theorem \ref{t2}. \qed

\subsection{Examples of potential such that $\rm(H3)$ is fulfilled}
The purpose of this subsection is to give examples of potential $V(t,x)$ satisfying (H1) and (H2) such that (H3) is fulfilled. Let $T_0>0$ and let $(V_{T_1})_{T_1\geq T_0}$ be a set of functions lying in $\mathcal C^\infty(\R^{1+n})$ and satisfying (H1) such that $V_{T_1}(t,x)$ is $T_1$-periodic with respect to $t$ and
\[V(t,x)=V_0(x),\quad \textrm{for } T_0\leq t\leq T_1.\]
Now consider the equation 
\[  \left\{\begin{aligned}
\partial_t^2v-\Delta_{x}v+V_0(x)v&=0,\ \ t>0,\ \ x\in{\R}^{n},\\
(v,v_{t})(0,x)&=(f_1,f_2),\ \ x\in{\R}^n\end{aligned}\right.\]
and let $\mathcal V(t)$ be the associate propagator. Notice that for $V(t,x)=V_{T_1}(t,x)$ we have
\[U(t,s)=\mathcal V(t-s),\quad\textrm{for } T_0\leq t,s\leq T_1.\]
Assume that $V_0(x)$ is chosen such that we have the following estimate
\[\norm{e^{-D\left\langle x\right\rangle}\mathcal V(t)e^{-D\left\langle x\right\rangle}}_{\mathcal L(\B)}\lesssim p(t),\quad t>0,\]
where $p(t)$ satisfies \eqref{local1}.
We refer to  \cite{SZ} and \cite{Va} for sufficient conditions to obtain this estimate for stationary potentials (we can also choose $V_0(x)=0$). Now consider the following result.
\begin{lem}\label{taru} Assume $\rm(H1)$, $\rm(H2)$ fulfilled. Then, for all $0\leq t\leq T_0$,\[e^{D\left\langle x\right\rangle}\left(U(t,0)-\mathcal V(t)\right)e^{D\left\langle x\right\rangle}\in\mathcal L(\B)\] and  we have
\[\norm{e^{D\left\langle x\right\rangle}\left(U(t,0)-\mathcal V(t)\right)e^{D\left\langle x\right\rangle}}_{\mathcal L(\B)}\leq C(T_0),\]
where $C(T_0)$ depend only on $T_0$..
\end{lem}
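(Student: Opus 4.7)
The plan is to derive the bound from a Duhamel comparison of the two propagators, sandwich the resulting identity with the weights $e^{D\langle x\rangle}$, and reduce to three uniformly bounded factors using Lemma \ref{l4}. Write $A_0$ for the free generator and
\[Q_0 = \begin{pmatrix}0&0\\V_0(x)&0\end{pmatrix},\qquad Q(s) = \begin{pmatrix}0&0\\V(s,x)&0\end{pmatrix}.\]
Since $U(\cdot,0)$ and $\mathcal V(\cdot)$ solve $\partial_tU = (A_0-Q(t))U$ and $\partial_t\mathcal V = (A_0-Q_0)\mathcal V$ with $U(0,0)=\mathcal V(0)=\mathrm{Id}$, the difference $W(t):=U(t,0)-\mathcal V(t)$ satisfies $W(0)=0$ and $\partial_tW = (A_0-Q_0)W - (Q(t)-Q_0)U(t,0)$. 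Duhamel's formula for the propagator $\mathcal V$ thus yields
\[U(t,0)-\mathcal V(t) = -\int_0^t \mathcal V(t-s)\bigl(Q(s)-Q_0\bigr)U(s,0)\,ds,\quad 0\leq t\leq T_0.\]

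Next, I would multiply this identity on both sides by $e^{D\langle x\rangle}$ and insert $e^{-(D+\epsilon)\langle x\rangle}e^{(D+\epsilon)\langle x\rangle}=\mathrm{Id}$ on each side of $Q(s)-Q_0$. The integrand becomes the composition $\mathcal A(t-s)\mathcal B(s)\mathcal C(s)$, where
\[\mathcal A(\tau)=e^{D\langle x\rangle}\mathcal V(\tau)e^{-(D+\epsilon)\langle x\rangle},\quad \mathcal B(s)=e^{(D+\epsilon)\langle x\rangle}(Q(s)-Q_0)e^{(D+\epsilon)\langle x\rangle},\quad \mathcal C(s)=e^{-(D+\epsilon)\langle x\rangle}U(s,0)e^{D\langle x\rangle}.\]
The middle factor $\mathcal B(s)$ reduces to multiplication by $e^{2(D+\epsilon)\langle x\rangle}(V(s,x)-V_0(x))$ in the second component, which by (H1) (applied to both $V$ and $V_0$) is dominated uniformly in $s$ by a Schwartz function decaying like $e^{-\epsilon\langle x\rangle}$; for $n\geq3$ this is a bounded multiplier from $\dot H^1(\R^n)$ to $L^2(\R^n)$ (via Sobolev embedding and H\"older), hence $\sup_{s\in[0,T_0]}\|\mathcal B(s)\|_{\mathcal L(\B)}<\infty$. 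Estimate \eqref{102} of Lemma \ref{l4} bounds $\mathcal C(s)$ uniformly on $[0,T_0]$, while estimate \eqref{103} of the same Lemma, applied to the stationary potential $V_0$ (which satisfies (H1) and trivially (H2) with any period), bounds $\mathcal A(\tau)$ uniformly on $[0,T_0]$.

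Combining the three bounds with the triangle inequality under the integral gives
\[\bigl\|e^{D\langle x\rangle}(U(t,0)-\mathcal V(t))e^{D\langle x\rangle}\bigr\|_{\mathcal L(\B)}\leq T_0\sup_{0\leq\tau,s\leq T_0}\|\mathcal A(\tau)\|\|\mathcal B(s)\|\|\mathcal C(s)\|=:C(T_0),\]
which is the required estimate. The main obstacle I anticipate is the \emph{dependence} of the constants: one must check that the bounds furnished by Lemma \ref{l4} depend only on $T_0$ and on the decay constants of (H1), not on $T_1$ or on the particular choice of $V_{T_1}$. Inspection of the Duhamel-based proof of Lemma \ref{l4} shows that its constants only involve the length of the time interval (to be replaced by $T_0$ here), the constants $C_{\alpha,\beta}$ of (H1) bounding the potential in $L^\infty$, and the operator norm of $U_0(t)$ together with the multiplier norm of $e^{-r\langle x\rangle}$ between the relevant Sobolev spaces, none of which depends on $T_1$.
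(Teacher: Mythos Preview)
Your proof is correct and follows essentially the same route as the paper's: both derive the Duhamel identity
\[U(t,0)-\mathcal V(t)=\int_0^t\mathcal V(t-s)Q_1(s)U(s,0)\,ds,\qquad Q_1(s)=\begin{pmatrix}0&0\\V_0(x)-V(s,x)&0\end{pmatrix},\]
and then invoke Lemma~\ref{l4} together with (H1) to bound the sandwiched integrand. The paper's proof is a two-line sketch; your version supplies the factorisation $\mathcal A(t-s)\mathcal B(s)\mathcal C(s)$ and the discussion of why the resulting constant depends only on $T_0$, which is exactly the content the paper leaves implicit.
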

\begin{proof}
An application of the Duhamel's principle yields
\[U(t,0)=\mathcal V(t)+\int_0^t\mathcal V(t-s)Q_1(s)U(s,0)\d s\]
with
\[Q_1(t)=\left(\begin{array}{cc}0&0\\ V_0(x)-V(t,x)&0\end{array}\right).\]
Thus, we conclude by applying Lemma \ref{l4} and (H1).\end{proof}

Following the arguments exploited in the last section of \cite{Ki3}, by applying Lemma \ref{taru}, one can show that for $T=T_1$ with $T_1$ sufficiently large  and for $V(t,x)=V_{T_1}(t,x)$ conditions (H1), (H2) and (H3) are fulfilled.

{\footnotesize
}

\end{document}